\newtheorem{theorem}			     {Theorem} [section]
\newtheorem{proposition}[theorem]	 {Proposition}	
\newtheorem{corollary}	  [theorem]	 {Corollary}	
\newtheorem{lemma}	      [theorem]  {Lemma}		
\theoremstyle{definition}
\newtheorem{remark}{Remark}
\newcommand{\C}{\mathbb{C}}
\newcommand{\R}{\mathbb{R}}
\newcommand{\Or}{\mathcal{O}}
\newcommand{\Be}{{\mathrm{Be}}}
\def\gg{{\mathfrak g}}
\newcommand{\res}{\textrm{Res}\,}
\newcommand{\re}{\textrm{Re}\,}
\newcommand{\im}{\textrm{Im}\,}
\numberwithin{equation}{section}
\def\ds{\displaystyle}
\def\bigO{{\cal O}}
\tikzset{->-/.style={decoration={
				markings,
				mark=at position #1 with {\arrow[thick]{>}}},postaction={decorate}}}
	\tikzset{-<-/.style={decoration={
				markings,
				mark=at position #1 with {\arrow[thick]{<}}},postaction={decorate}}}
\begin{document}
\title{The Bessel kernel determinant on large intervals \\ and Birkhoff's ergodic theorem}
\author{Elliot Blackstone, Christophe Charlier, Jonatan Lenells \footnote{Department of Mathematics, KTH Royal Institute of Technology, 100 44 Stockholm, Sweden. The work of all three authors is supported by the European Research Council, Grant Agreement No. 682537. Lenells also acknowledges support from the Swedish Research Council, Grant No. 2015-05430, and the Ruth and Nils-Erik Stenb\"ack Foundation.
E-mail: elliotb@kth.se, cchar@kth.se, jlenells@kth.se.}}
\maketitle

\begin{abstract}
The Bessel process models the local eigenvalue statistics near $0$ of certain large positive definite matrices. In this work, we consider the probability
\begin{align*}
\mathbb{P}\Big( \mbox{there are no points in the Bessel process on } (0,x_{1})\cup(x_{2},x_{3})\cup\cdots\cup(x_{2g},x_{2g+1}) \Big),
\end{align*}
where $0<x_{1}<\cdots<x_{2g+1}$ and $g \geq 0$ is any non-negative integer. We obtain asymptotics for this probability as the size of the intervals becomes large, up to and including the oscillations of order $1$. In these asymptotics, the most intricate term is a one-dimensional integral along a linear flow on a $g$-dimensional torus, whose integrand involves ratios of Riemann $\theta$-functions associated to a genus $g$ Riemann surface.  We simplify this integral in two generic cases: (a) If the flow is ergodic, we compute the leading term in the asymptotics of this integral explicitly using Birkhoff's ergodic theorem. (b) If the linear flow has certain ``good Diophantine properties'', we obtain improved estimates on the error term in the asymptotics of this integral.  In the case when the flow is both ergodic and has ``good Diophantine properties'' (which is always the case for $g=1$, and ``almost always'' the case for $g \geq 2$), these results can be combined, yielding particularly precise and explicit large gap asymptotics.
\end{abstract}

\noindent
{\small{\sc AMS Subject Classification (2020)}: 41A60, 60B20, 60G55, 30E25.}

\noindent
{\small{\sc Keywords}: Large gap probability, Bessel point process, Birkhoff's ergodic theorem, random matrix theory, Riemann surface, Riemann--Hilbert problem.}


\section{Introduction}
Let $g \geq 0$ be an integer, let $\vec{x} = (x_1,x_2,\dots,x_{2g+1})$ be such that $0<x_1<\cdots<x_{2g+1}<+\infty$, and consider the Fredholm determinant
\begin{equation}\label{generating}
F(\vec{x}) = \det\left( 1 - \mathcal{K}|_{\mathcal{I}_g} \right), \qquad \mathcal{I}_g=(0,x_{1})\cup(x_{2},x_{3})\cup\cdots\cup(x_{2g},x_{2g+1}),
\end{equation}
where $\mathcal{K}|_{\mathcal{I}_g}$ is the trace class operator acting on $L^{2}(\mathcal{I}_g)$ whose kernel is given by
\begin{equation}\label{Bessel kernel}
K(x,y) = \frac{J_{\alpha}(\sqrt{x})\sqrt{y}J_{\alpha}^{\prime}(\sqrt{y})-\sqrt{x}J_{\alpha}^{\prime}(\sqrt{x})J_{\alpha}(\sqrt{y})}{2(x-y)}, \qquad \alpha > -1,
\end{equation}
and $J_\alpha$ is the Bessel function of the first kind of order $\alpha$. 

\medskip The determinantal point process on $(0,+\infty)$ associated to the kernel \eqref{Bessel kernel} is referred to as the Bessel point process. This is a universal point process in random matrix theory which models the behavior of the smallest eigenvalues for a wide class of large positive definite random matrices, see e.g. \cite{For1, ForNag}. As is well-known, $F(\vec{x})$ is the probability of finding no points on $\mathcal{I}_{g}$ in the Bessel process. In this paper, we obtain asymptotics for $F(r\vec{x}) = F(rx_1,\dots,rx_{2g+1})$ as $r \to + \infty$. 

\paragraph{Known results for $g=0$.} Tracy and Widom in \cite{TraWidLUE} have shown that $F(x_{1})$ can be naturally expressed in terms of the solution of a Painlev\'{e} V equation. They also obtained large $r$ asymptotics of $F(rx_{1})$, namely,
\begin{equation}\label{largegapBessel1}
F(rx_{1}) = \det\left( 1 - \mathcal{K}|_{\mathcal{I}_0} \right) = \exp\left(-\frac{rx_{1}}{4}+\alpha\sqrt{rx_{1}}-\frac{\alpha^2}{4}\log r +C+\bigO(r^{-1/2})\right) \qquad \mbox{as } r \to + \infty,
\end{equation}
where $C$ is independent of $r$. Based on numerics, they conjectured that
\begin{align}\label{Cgenus0}
C=G(1+\alpha)(2\pi)^{-\frac{\alpha}{2}}-\frac{\alpha^{2}}{4}\log x_{1},
\end{align}
where $G$ is Barnes' $G$-function. The expression (\ref{Cgenus0}) for the constant $C$ was first established rigorously by Ehrhardt in \cite{Ehr2010} for $\alpha \in (-1,1)$, and then by Deift, Krasovsky, and Vasilevska in \cite{DeiftKrasVasi} for all values of $\alpha \in (-1,+\infty)$. 

\medskip For $g\geq 1$, it is known from \cite{ChDoe} that $F(\vec{x})$ is related to a solution of a system of $2g+1$ coupled Painlev\'{e} V equations. However, to the best of our knowledge, there exist no results prior to this work on the large $r$ asymptotics of $F(r\vec{x})$ for $g \geq 1$.  

\paragraph{Statement of results.}  Let $X$ be the Riemann surface of genus $g$ associated to $\sqrt{\mathcal{R}(z)}$, where
\begin{align}\label{intro: R}
\mathcal{R}(z)=\prod_{j=1}^{2g+1}(z+x_j).
\end{align}
We view $X$ as two copies of $\mathbb{C}$ that are glued along $(-\infty,-x_{2g+1})\cup \cdots \cup (-x_{4},-x_{3}) \cup (-x_{2},-x_{1})$. Given $z \in \mathbb{C}$, we write $z^{+} \in X$ (resp. $z^{-} \in X$) for the points on the first (resp. second) sheet of $X$ that project onto $z$. We choose the sheets such that $\sqrt{\mathcal{R}(z^{+})}>0$ for $z\in(-x_1,+\infty)$. Consider the cycles $A_{1},\ldots,A_{g},B_{1},\ldots,B_{g}$ shown in Figure \ref{fig:homology}. The cycle $B_{j}$ lies entirely in the first sheet and surrounds $(-x_{2j},-x_{2j-1})$ in the positive direction, while $A_{j}$ lies partly in the upper half-plane of the first sheet (the solid red curves in Figure \ref{fig:homology}) and partly in the lower half-plane of the second sheet (the dashed red curves in Figure \ref{fig:homology}). These cycles form a canonical homology basis of $X$. Define the $g\times g$ matrix $\mathbb{A}$ and the column vector $\vec{a}$ by
\begin{align*}
\mathbb{A} = (a_{i,j})_{i,j=1}^{g}, \qquad \vec{a} =(a_{1,g+1} \; \; \cdots \;\; a_{g,g+1})^{t},
\end{align*}
where
\begin{align*}
a_{i,j}=\oint_{A_i}\frac{s^{j-1}}{\sqrt{\mathcal{R}(s)}}ds, \qquad i=1,\ldots,g, \quad j=1,\ldots,g+1,
\end{align*}
and $^t$ denotes the transpose operation. Because $x_{1},\ldots,x_{2g+1}$ are real, it follows that $a_{i,j}\in\mathbb{R}$ for all $i=1,\ldots,g$, $j=1,\ldots,g+1$. Let $\vec{\omega}=(\omega_1,\dots,\omega_g)$ be the row vector of holomorphic one-forms given by
\begin{align}\label{intro: omegaVec}
\vec{\omega}=\frac{dz}{\sqrt{\mathcal{R}(z)}} (1 \;\; z \;\; \cdots \; \; z^{g-1})\mathbb{A}^{-1},
\end{align}
and let $q$ be the following polynomial of degree $g$:
\begin{align}\label{intro: q}
q(z)=\frac{z^g}{2}+\sum_{j=0}^{g-1}q_jz^j, \quad \text{where} \quad  (q_{0},\ldots,q_{g-1})^{t} = -\frac{1}{2}\mathbb{A}^{-1} \vec{a}.
\end{align}
\begin{figure}
\vspace{-1cm}
\centering
\begin{tikzpicture}

\draw (-8,0) -- (-6,0);
\draw (-5,0) -- (-3.6,0);
\draw (-2,0) -- (0,0);
\draw[dotted] (0.5,0) -- (1,0);
\draw (1.5,0) -- (2.7,0);
\draw (3.9,0) -- (6.5,0);

\draw[fill] (-6,0) circle (0.05cm);
\draw[fill] (-5,0) circle (0.05cm);
\draw[fill] (-3.6,0) circle (0.05cm);
\draw[fill] (-2,0) circle (0.05cm);
\draw[fill] (0,0) circle (0.05cm);
\draw[fill] (1.5,0) circle (0.05cm);
\draw[fill] (2.7,0) circle (0.05cm);
\draw[fill] (3.9,0) circle (0.05cm);
\draw[fill] (6.5,0) circle (0.05cm);

\node at (-6.0,-0.3) {$-x_{2g+1}$};
\node at (-4.82,-0.3) {$-x_{2g}$};
\node at (-3.6,-0.3) {$-x_{2g-1}$};
\node at (-1.75,-0.3) {$-x_{2g-2}$};
\node at (-0.30,-0.3) {$-x_{2g-3}$};
\node at (1.60,-0.3) {$-x_{4}$};
\node at (2.6,-0.3) {$-x_{3}$};
\node at (3.9,-0.3) {$-x_{2}$};
\node at (6.35,-0.3) {$-x_{1}$};

\draw[blue] (-4.1,-0.10) ellipse (1.2cm and 0.70cm);
\draw[blue] (-1.0,-0.10) ellipse (1.40cm and 0.80cm);
\draw[blue] (2.1,-0.10) ellipse (1.0cm and 0.70cm);
\draw[blue] (5.2,-0.10) ellipse (1.70cm and 0.80cm);

\node[blue] at (-3.15,-0.92) {$B_g$};
\node[blue] at (0.4,-0.92) {$B_{g-1}$};
\node[blue] at (2.9,-0.92) {$B_2$};
\node[blue] at (6.65,-0.92) {$B_1$};

\draw[blue,arrows={-Triangle[length=0.18cm,width=0.12cm]}]
(-3.95,-0.8) --  ++(0:0.001);
\draw[blue,arrows={-Triangle[length=0.18cm,width=0.12cm]}]
(-0.85,-0.9) --  ++(0:0.001);
\draw[blue,arrows={-Triangle[length=0.18cm,width=0.12cm]}]
(2.25,-0.8) --  ++(0:0.001);
\draw[blue,arrows={-Triangle[length=0.18cm,width=0.12cm]}]
(5.4,-0.9) --  ++(0:0.001);

\draw[red] (-4.2,0) arc(0:180:1.25cm and 0.85cm);
\draw[red,dashed] (-6.7,0) arc(180:360:1.25cm and 0.85cm);
\draw[red] (-7.0,0) .. controls (-7.0,1.7) and (-1.00,1.7) .. (-1.00,0);
\draw[red,dashed] (-7.0,0) .. controls (-7.0,-1.7) and (-1.00,-1.7) .. (-1.00,0);
\draw[red] (-7.3,0) .. controls (-7.3,2.3) and (2.15,2.3) .. (2.15,0);
\draw[red,dashed] (-7.3,0) .. controls (-7.3,-2.3) and (2.15,-2.3) .. (2.15,0);
\draw[red] (-7.6,0) .. controls (-7.6,2.9) and (5.00,2.9) .. (5.00,0);
\draw[red,dashed] (-7.6,0) .. controls (-7.6,-2.9) and (5.00,-2.9) .. (5.00,0);

\node[red] at (-4.85,0.92) {$A_g$};
\node[red] at (-2.30,1.32) {$A_{g-1}$};
\node[red] at (1.40,1.32) {$A_2$};
\node[red] at (4.55,1.32) {$A_1$};

\draw[red,arrows={-Triangle[length=0.18cm,width=0.12cm]}]
(-5.55,0.85) --  ++(0:-0.001);
\draw[red,arrows={-Triangle[length=0.18cm,width=0.12cm]}]
(-5.35,-0.85) --  ++(0:0.001);
\draw[red,arrows={-Triangle[length=0.18cm,width=0.12cm]}]
(-3.85,1.27) --  ++(0:-0.001);
\draw[red,arrows={-Triangle[length=0.18cm,width=0.12cm]}]
(-3.65,-1.27) --  ++(0:0.001);
\draw[red,arrows={-Triangle[length=0.18cm,width=0.12cm]}]
(-2.2,1.72) --  ++(0:-0.001);
\draw[red,arrows={-Triangle[length=0.18cm,width=0.12cm]}]
(-2.0,-1.72) --  ++(0:0.001);
\draw[red,arrows={-Triangle[length=0.18cm,width=0.12cm]}]
(-1.5,2.18) --  ++(0:-0.001);
\draw[red,arrows={-Triangle[length=0.18cm,width=0.12cm]}]
(-1.3,-2.18) --  ++(0:0.001);

\end{tikzpicture}
\vspace{-0.5cm}
\caption{Canonical homology basis.}
\label{fig:homology}
\end{figure}
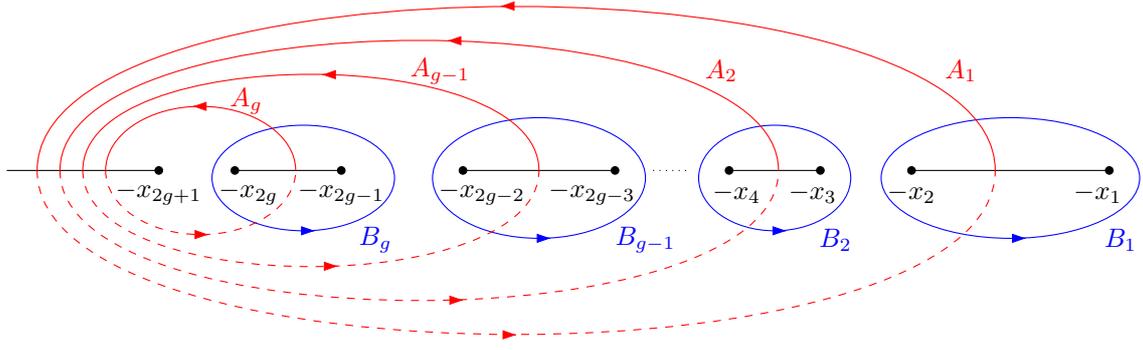
By definition, $\vec\omega$ and $q$ satisfy
\begin{align}\label{prop of omega and q in intro}
\oint_{A_k}\omega_j=\delta_{jk}, \qquad j,k = 1, \ldots,g, \qquad \mbox{ and } \qquad \oint_{A_{j}}\frac{q(s)}{\sqrt{R(s)}}ds=0, \qquad j=1,\dots,g.
\end{align}
Since $a_{i,j}\in\mathbb{R}$, $q$ has real coefficients, and we show in Section \ref{subsection: g-function} that $q$ has exactly one zero in each of the intervals $(-x_{2j+1},-x_{2j})$, $j=1,\ldots,g$.  The constant $c$ defined by
\begin{align}\label{intro: c const}
c=q_{g-1}-\frac{1}{4}\sum_{j=1}^{2g+1}x_j,
\end{align}
and the constants $\Omega_j$, $j=1,\dots,g$, defined by
\begin{align}\label{intro: Omegaj}
\Omega_j= -i \oint_{B_{j}}\frac{q(s)ds}{\sqrt{\mathcal{R}(s)}} = -2\int_{-x_{2j-1}}^{-x_{2j}}\frac{q(s)ds}{|\mathcal{R}(s)|^\frac{1}{2}} \in (0,+\infty)
\end{align}
appear in our final results. Define
\begin{align}\label{def of d1}
d_1= \frac{1}{2}\sum_{j=0}^g\tilde{\alpha}_j a_{j,g+1}, \qquad a_{0,j}=-2 \, \re \int_{-x_{2g+1}}^{0^{+}}\frac{s^{j-1}ds}{\sqrt{\mathcal{R}(s)}}, \; j=1,\ldots,g+1,
\end{align}
where the path of integration lies in the upper half-plane of the first sheet and $\tilde{\alpha}_{0},\tilde{\alpha}_{1},\ldots,\tilde{\alpha}_{g} \in \mathbb{R}$ are given by
\begin{align}\label{def of alpha tilde}
\tilde{\alpha}_0 =\frac{\alpha}{2}, \qquad (\tilde{\alpha}_{1} \;\; \tilde{\alpha}_{2} \;\; \cdots \;\; \tilde{\alpha}_{g}) = -\frac{\alpha}{2} (a_{0,1} \;\; a_{0,2} \;\; \ldots \;\; a_{0,g}) \mathbb{A}^{-1}.
\end{align}
Given $r \geq 0$, we let $\vec\nu(r) = \vec\nu = (\nu_{1},\ldots, \nu_{g})^{t}$ be the column vector with components
\begin{align}\label{intro: nu}
\nu_j=-\tilde{\alpha}_j -\Omega_j\frac{\sqrt{r}}{2\pi} \in \mathbb{R}, \qquad j=1,\ldots,g.
\end{align}
It follows from the general theory of Riemann surfaces, see e.g. \cite[p. 63]{FK1992}, that the period matrix
\begin{equation}\label{intro: tau}
\tau:=\left(\oint_{B_i}\omega_j\right)_{i,j=1,\dots,g}
\end{equation}
is symmetric and has positive definite imaginary part. The associated Riemann $\theta$-function is defined by
\begin{equation}\label{intro: theta def}
\theta:\mathbb{C}^g\to\mathbb{C}, \qquad \vec{z} \mapsto \theta(\vec{z})=\sum_{\vec{n}\in\mathbb{Z}^g}e^{i\pi(\vec{n}^t\tau\vec{n}+2\vec{n}^t\vec{z})}.
\end{equation}
This function is entire and satisfies 
\begin{equation}\label{intro: ThetaPeriodic}
\theta(-\vec{z}) = \theta(\vec{z}), \qquad \theta(\vec{z}+\vec{\lambda}'+\tau\vec{\lambda})=e^{-i\pi(2\vec{\lambda}^t\vec{z}+\vec{\lambda}^t\tau\vec{\lambda})}\theta(\vec{z}), \qquad \mbox{for all }\vec{z} \in \mathbb{C}^{g}, \quad \vec{\lambda}, \vec{\lambda}'\in\mathbb{Z}^g.
\end{equation}
The Abel map $\vec{\varphi}$ is defined by
\begin{align}\label{def of Abel map}
\vec{\varphi}: X \to J(X), \qquad P \mapsto \vec{\varphi}(P)=\int_{-x_1}^P \vec{\omega}^{t},
\end{align}
where $J(X)$ is the Jacobian variety of $X$: 
\begin{align}\label{def of J(X) and L(X)}
J(X)=\mathbb{C}^g/L(X), \quad \mbox{ where } \quad L(X)=\{\vec{z}\in\mathbb{C}^g:\vec{z}=\vec{n}+\tau\vec{m} ~ \text{for some} ~ \vec{n},\vec{m}\in\mathbb{Z}^g\}.
\end{align}
We show in Proposition \ref{ThetaIdentity} that for each $r\geq 0$, the multi-valued function $P \mapsto \theta(\vec{\varphi}(P)+\frac{\vec{e}_1}{2}+ \vec\nu(r))$ has a well-defined zero set of cardinality $g$, and that the projections of these zeros on $\mathbb{C}$, which are denoted $b_{1}(\vec{\nu}(r)),\ldots,b_{k}(\vec{\nu}(r))$, satisfy $b_{k}(\vec{\nu}(r)) \in [-x_{2k+1},-x_{2k}]$, $k=1,\ldots,g$. Define the function $\mathcal{B} :\mathbb{C}\times \mathbb{R}^{g}/\mathbb{Z}^{g} \to\mathbb{C}$ by
\begin{align}\label{calBdef}
\mathcal{B}(z,\vec{u})&=\frac{\prod_{k=1}^g(z-b_k(\vec{u}))}{q(z)}, \qquad z \in \mathbb{C}, \; \vec{u} \in \mathbb{R}^{g}/\mathbb{Z}^{g}.
\end{align}
We mention that $\mathcal{B}$ admits another equivalent (but more complicated) expression involving certain ratios of $\theta$-functions, see \eqref{B}.

Our first theorem provides an asymptotic formula for $F(r\vec{x})$ for any choice of the points $0<x_1<x_2<\cdots<x_{2g+1}<+\infty$.

\begin{theorem}[The general case]\label{thm: main result - DIZ analogue}
Let $0<x_1<x_2<\cdots<x_{2g+1}<+\infty$ and $\alpha>-1$ be fixed. As $r \to + \infty$, we have
\begin{align}
& F(r\vec{x}) = \exp \Bigg( c\, r - d_1 \sqrt{r} + \frac{1-4\alpha^{2}}{16} \log r + \log \theta(\vec\nu(r)) - \frac{1}{32} \sum_{j=1}^{2g+1}\int_{M}^{r} \mathcal{B}(-x_j,\vec{\nu}(t))\frac{dt}{t}+\widetilde{C}+ \bigO(r^{-\frac{1}{2}}) \Bigg), \label{asymptotics in main thm}
\end{align}
where $F$ is the Fredholm determinant \eqref{generating}, $M>0$ is arbitrary but independent of $r$ and $\widetilde{C}=\widetilde{C}(M)$ is independent of $r$. Furthermore, for any $j=1,\ldots,2g+1$, as $r \to + \infty$ we have
\begin{align}\label{leading term is a space average}
& \int_{M}^{r} \mathcal{B}(-x_j,\vec{\nu}(t))\frac{dt}{t} = \mathcal{B}_{j} \log r + o(\log r), \qquad \mbox{ where } \qquad \mathcal{B}_{j} := \lim_{\mathrm{T} \to + \infty}\frac{1}{\mathrm{T}} \int_{0}^{\mathrm{T}} \mathcal{B}(-x_j,\vec{\nu}(t^{2}))dt.
\end{align}
\end{theorem}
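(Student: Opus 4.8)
\emph{Overview.} The theorem combines two statements of very different nature: the full asymptotic expansion \eqref{asymptotics in main thm}, which I would establish by a Deift--Zhou steepest descent analysis of a Riemann--Hilbert problem (RHP), and the refinement \eqref{leading term is a space average} of the $\log r$-term, which I would establish by a short argument combining the ergodicity of a linear torus flow with an integration by parts.

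For \eqref{asymptotics in main thm} I would start from an RHP encoding the Fredholm determinant \eqref{generating}: because $\mathcal K|_{\mathcal I_g}$ is an integrable operator in the sense of Its--Izergin--Korepin--Slavnov, $F(r\vec x)$ is governed by a $2\times2$ RHP $\Phi=\Phi(\cdot;r)$ with jumps on a contour built from $\mathcal I_g$ (this is the RHP underlying the coupled Painlev\'e V system of \cite{ChDoe}), and there is a differential identity writing $\tfrac{d}{dr}\log F(r\vec x)$ in terms of $\Phi$ near the hard edge $0$ and near the soft edges $-x_1,\dots,-x_{2g+1}$. The heart of the proof is then the steepest descent analysis of $\Phi$ as $r\to+\infty$: (i) normalize with the $g$-function built from $q(z)\,dz/\sqrt{\mathcal R(z)}$, whose normalization \eqref{prop of omega and q in intro} and the fact (proved in Section~\ref{subsection: g-function}) that $q$ has exactly one zero in each gap $(-x_{2j+1},-x_{2j})$ yield the band/gap inequalities needed for lens opening; (ii) open lenses around the bands; (iii) solve the outer parametrix on $X$ explicitly in terms of the Riemann $\theta$-function, the Abel map \eqref{def of Abel map}, and the vector $\vec\nu(r)$ of \eqref{intro: nu}; (iv) build a Bessel parametrix of order $\alpha$ at $0$ and Airy parametrices at each $-x_j$; (v) show $R:=\Phi\times(\text{parametrix})^{-1}$ solves a small-norm RHP with $R=I+\bigO(r^{-1/2})$. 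Substituting these asymptotics into the differential identity and integrating from $M$ to $r$ gives \eqref{asymptotics in main thm}: $cr$ comes from the leading term of the $g$-function, $-d_1\sqrt r$ from its $\alpha$-dependent subleading term, $\tfrac{1-4\alpha^2}{16}\log r$ from the Bessel parametrix, $\log\theta(\vec\nu(r))$ from the quasi-periodic part of the outer parametrix, and $\widetilde C=\widetilde C(M)$ absorbs all $\bigO(1)$ contributions (its $M$-dependence cancelling that of the lower endpoint of the remaining integral). I expect the main obstacle to be pushing the steepest descent far enough to resolve the $\bigO(r^{-1})$ \emph{oscillatory} term in $\tfrac{d}{dr}\log F(r\vec x)$: one must show that the coupling between the outer parametrix and the Airy parametrix at each soft edge $-x_j$ contributes precisely $-\tfrac1{32}\mathcal B(-x_j,\vec\nu(r))\,r^{-1}$ up to an $\bigO(r^{-3/2})$ error (with $\mathcal B$ as in \eqref{calBdef}), whence, after integration, the term $-\tfrac1{32}\sum_j\int_M^r\mathcal B(-x_j,\vec\nu(t))\tfrac{dt}{t}$.

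For \eqref{leading term is a space average} I would first substitute $t=s^2$, so that $\int_M^r\mathcal B(-x_j,\vec\nu(t))\tfrac{dt}{t}=2\int_{\sqrt M}^{\sqrt r}\mathcal B(-x_j,\vec\nu(s^2))\tfrac{ds}{s}$; by \eqref{intro: nu} the components of $\vec\nu(s^2)$ are $-\tilde\alpha_j-\tfrac{s}{2\pi}\Omega_j$, i.e.\ $s\mapsto\vec\nu(s^2)$ is a \emph{linear} flow on $\mathbb R^g/\mathbb Z^g$ with constant direction $-\tfrac1{2\pi}(\Omega_1,\dots,\Omega_g)$ — this linearization is exactly why a $\log r$ (and not an $r$) is produced. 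Since the zeros $b_1(\vec u),\dots,b_g(\vec u)$ lie in the disjoint closed intervals $[-x_{2k+1},-x_{2k}]$ and therefore depend continuously on $\vec u$, and since $q(-x_j)\neq0$ (the zeros of $q$ are strictly inside the gaps, while the ordering of the $x_j$ prevents any $-x_j$ from lying in an open gap), the map $\vec u\mapsto\mathcal B(-x_j,\vec u)$ is continuous on the torus. The orbit closure of the flow is a subtorus $\mathbb T_0$ and the flow is a minimal, hence uniquely ergodic, translation flow on the coset $\vec\nu(0)+\mathbb T_0$; consequently the time average $\mathcal B_j=\lim_{T\to\infty}\tfrac1T\int_0^T\mathcal B(-x_j,\vec\nu(t^2))\,dt$ exists and equals the average of $\mathcal B(-x_j,\cdot)$ over $\vec\nu(0)+\mathbb T_0$ (equivalently, apply Weyl equidistribution to the Fourier expansion of $\mathcal B(-x_j,\cdot)$). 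Finally, with $G(s):=\int_0^s\mathcal B(-x_j,\vec\nu(u^2))\,du$ we have $G(s)=\mathcal B_j s+o(s)$, so integrating by parts, $2\int_{\sqrt M}^{\sqrt r}\mathcal B(-x_j,\vec\nu(s^2))\tfrac{ds}{s}=\big[2G(s)/s\big]_{\sqrt M}^{\sqrt r}+2\int_{\sqrt M}^{\sqrt r}G(s)\,s^{-2}\,ds=\mathcal B_j\log r+o(\log r)$, since the boundary terms are $\bigO(1)$ and $2\int_{\sqrt M}^{\sqrt r}\big(G(s)-\mathcal B_j s\big)s^{-2}\,ds=o(\log r)$. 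This second part is short; its only subtle point is the substitution $t\mapsto s^2$ that turns the motion of $\vec\nu$ into a genuine linear flow.
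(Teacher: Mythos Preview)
Your overall architecture for \eqref{asymptotics in main thm} is right (IIKS differential identity, $g$-function, lenses, $\theta$-function outer parametrix, small-norm RHP), and your argument for \eqref{leading term is a space average} is essentially the paper's: the substitution $t=s^2$ linearizes the flow, continuity of $\vec u\mapsto\mathcal B(-x_j,\vec u)$ gives an almost-periodic integrand whose time average exists, and your integration by parts is exactly the identity the paper writes down and differentiates. That part is fine.

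Where your proposal goes wrong is in the local analysis. First, the endpoints $-x_j$ are \emph{not} Airy-type. Near each $-x_j$ the phase satisfies $\gg(z)\mp \tfrac{i}{2}\widehat\Omega_{\lfloor j/2\rfloor}\sim c_j(z+x_j)^{1/2}$, i.e.\ square-root (not $3/2$-power) vanishing; correspondingly the RHP for $\Phi$ has logarithmic behaviour at $-x_j$ (condition (d)), which is the signature of the Bessel model of order $0$. The paper builds all $2g+1$ local parametrices out of $\Phi_{\mathrm{Be}}$, and it is the explicit constant in $\Phi_{\mathrm{Be},1}$ that produces the coefficient $\tfrac{1}{16i}$ in $R^{(1/2)}_{1,12}=\tfrac{1}{16i}\sum_j\mathcal B(-x_j,\vec\nu(r))$ and hence the $-\tfrac{1}{32}$ in \eqref{asymptotics in main thm}. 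An Airy parametrix neither matches the jumps/singularity structure nor yields this constant.

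Second, there is \emph{no} local parametrix at $0$, and the term $\tfrac{1-4\alpha^2}{16}\log r$ does not arise from one. The $\alpha$-dependence at the origin is absorbed already at the level of the differential identity: in passing from the IIKS solution $Y$ to $\Phi$ one multiplies by the Bessel model $\widetilde P_{\mathrm{Be}}$, which contributes the additive constant $\widetilde P_{\mathrm{Be},1,12}=\tfrac{1-4\alpha^2}{8i}$ directly to $\Phi_{1,12}$, so that $\partial_r\log F(r\vec x)=\tfrac{1}{2ir}\Phi_{1,12}(r)+\tfrac{1-4\alpha^2}{16r}$ before any steepest descent begins. In the steepest descent itself, the hard-edge behaviour $S(z)=S_0(z)z^{\alpha\sigma_3/2}$ is matched exactly by the global parametrix through a Szeg\H{o}-type scalar function $D(z)$ with $D(z)=z^{\alpha/2}\,\bigO(1)$ near $0$; no disk around $0$ is opened. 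If you place a Bessel($\alpha$) parametrix at $0$ you will double-count the $\alpha$-contribution and mis-attribute the $d_1\sqrt r$ term, which in fact comes from the large-$z$ expansion of $D$.
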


\begin{remark}
If $g=0$, then the term $\log \theta(\vec{\nu}(r))$ in (\ref{asymptotics in main thm}) should be interpreted as being equal to $0$.
\end{remark}

The asymptotic formula \eqref{leading term is a space average} is the best we can say for general choices of $x_{1},\ldots,x_{2g+1}$. However, if the linear flow
\begin{align}\label{the linear flow intro}
(0,+\infty) \ni r \mapsto (\nu_{1}(r^{2}) \hspace{-2.4mm} \mod 1 \; , \, \nu_{2}(r^{2}) \hspace{-2.4mm} \mod 1 \; , \ldots, \, \nu_{g}(r^{2})\hspace{-2.4mm} \mod 1)
\end{align}
has either ``good Diophantine properties" or is ergodic in the $g$-dimensional torus $\mathbb{R}^{g}/ \mathbb{Z}^{g}$, then one can strengthen \eqref{leading term is a space average} (and thereby \eqref{asymptotics in main thm}). Let us define these notions:
\begin{itemize}
\item The flow \eqref{the linear flow intro} is said to have ``good Diophantine properties"
if there exist $\delta_{1},\delta_{2}>0$ such that 
\begin{align}\label{good diophantine prop}
|\vec{m}^{t}\vec{\Omega}| \geq \delta_{1}\| \vec{m} \|^{-\delta_{2}} \qquad \mbox{for all } \vec{m} \in \mathbb{Z}^{g \times 1} \mbox{ such that } \vec{m}^{t}\vec{\Omega} \neq 0,
\end{align}
where $\vec{\Omega} = (\Omega_{1} \;  \; \Omega_{2} \;  \; \ldots \;  \; \Omega_{g})^{t}$ and $\| \vec{m} \|^{2} = \vec{m}^{t}\vec{m}$.

\item The flow \eqref{the linear flow intro} is said to be ergodic in $\mathbb{R}^{g}/ \mathbb{Z}^{g}$ if the set $\{\vec{\nu}(r^{2}) \hspace{-0.1mm} \mod \mathbb{Z}^{g}\}_{r \in (0,+\infty)}$ is dense in $\mathbb{R}^{g}/ \mathbb{Z}^{g}$.  Equivalently, the flow \eqref{the linear flow intro} is ergodic in $\mathbb{R}^{g}/ \mathbb{Z}^{g}$ if $\Omega_1,\ldots,\Omega_g$ are rationally independent, that is, if $\Omega_1,\ldots,\Omega_g$ satisfy the following condition: If $(n_{1},n_{2},\ldots,n_{g}) \in \mathbb{Z}^{g}$ is such that
\begin{align}\label{ergodic condition in thm}
\Omega_{1} n_{1}+\Omega_{2} n_{2} + \cdots + \Omega_{g} n_{g} = 0,
\end{align}
then $n_{1}=n_{2}= \cdots=n_{g}=0$.
\end{itemize}
Let $\mathcal{D},\mathcal{E} \subseteq (0,+\infty)^{g}$ be  given by
\begin{align*}
\mathcal{D}=\{(\Omega_{1},\ldots,\Omega_{g}): \mbox{\eqref{good diophantine prop} holds true}\}, \qquad \mathcal{E}=\{(\Omega_{1},\ldots,\Omega_{g}): \mbox{\eqref{ergodic condition in thm} holds true}\}.
\end{align*}
The sets $\mathcal{D}$ and $\mathcal{E}$ are generic, in the sense that $(0,+\infty)^{g}\setminus \mathcal{D}$ and $(0,+\infty)^{g}\setminus \mathcal{E}$ have Lebesgue measure zero. In the case of $\mathcal{E}$, this follows easily from the definition; in the case of $\mathcal{D}$, this follows from the Khintchine--Groshev theorem, see e.g. \cite[Corollary 1.4 (b)]{BD1999}. If $g=1$, then $\mathcal{D}=\mathcal{E}=(0,+\infty)$, but for $g \geq 2$, it is easy to see that $\mathcal{D}\subsetneq \mathcal{E}$, $\mathcal{E}\subsetneq \mathcal{D}$, and $\mathcal{D}\neq (0,+\infty)^{g} \neq\mathcal{E}$. For example, 
\begin{itemize}
\item \vspace{-0.2cm} if $g=2$ then $(1,\sqrt{2}) \in \mathcal{D}\cap \mathcal{E}$,\footnote{$(1,\sqrt{2}) \in \mathcal{D}$ holds with $\delta_{2}=1$ in \eqref{good diophantine prop}. This follows from Liouville's theorem on diophantine approximation and the fact that $\sqrt{2}$ is a root of the degree $2$ polynomial $x^{2}-2$.}
\item \vspace{-0.15cm} if $g=2$ then $(1,0) \in \mathcal{D}\setminus \mathcal{E}$, 
\item \vspace{-0.15cm} if $g=2$ then $\big(1,\mathcal{L}\big) \in \mathcal{E}\setminus \mathcal{D}$,  where $\mathcal{L} = \sum_{n=1}^{+\infty}10^{-n!}$ is Liouville's constant,
\item \vspace{-0.2cm} if $g=3$ then $\big(1,\mathcal{L},0\big)\notin \mathcal{D}\cup \mathcal{E}$.
\end{itemize}
We show in Appendix \ref{appendix: mapping open ball} that the image of the mapping $(x_{1},x_{2},\ldots,x_{2g+1})\mapsto (\Omega_{1},\Omega_{2},\ldots,\Omega_{g})$ contains an open ball in $(0,+\infty)^{g}$, so that each of the four different cases
\begin{align}\label{the four cases}
(\Omega_{1},\ldots,\Omega_{g})\notin \mathcal{D}\cup \mathcal{E}, \quad (\Omega_{1},\ldots,\Omega_{g}) \in \mathcal{D}\setminus \mathcal{E}, \quad (\Omega_{1},\ldots,\Omega_{g})\in \mathcal{E}\setminus \mathcal{D}, \quad (\Omega_{1},\ldots,\Omega_{g}) \in \mathcal{D}\cap \mathcal{E}
\end{align}
can and do occur for certain choices of $(x_{1},x_{2},\ldots,x_{2g+1})$. (In fact, for $g=2$ there are only three cases because $\mathcal{D}\cup \mathcal{E} = (0,+\infty)^{2}$).

\begin{theorem}[The ``good Diophantine" case]\label{thm: main result - periodic}
Let $0<x_1<x_2<\cdots<x_{2g+1}<+\infty$ and $\alpha>-1$ be fixed, and assume that $(\Omega_{1},\Omega_{2},\ldots,\Omega_{g}) \in \mathcal{D}$. In this case, \eqref{leading term is a space average} can be improved to
\begin{align}\label{good error term inside main thm dioph}
& \int_{M}^{r} \mathcal{B}(-x_j,\vec{\nu}(t))\frac{dt}{t} = \mathcal{B}_{j} \log r + C_{j} + \bigO(r^{-1/2}) \qquad \mbox{as } r \to + \infty, \quad j=1,\ldots,2g+1,
\end{align}
where $C_{1},\ldots,C_{2g+1}$ are independent of $r$. Therefore, \eqref{asymptotics in main thm} can be improved to
\begin{align}
& F(r\vec{x}) = \exp \Bigg( c\, r - d_1 \sqrt{r} + \Bigg(\frac{1-4\alpha^2}{16}-\frac{1}{32}\sum_{j=1}^{2g+1}\mathcal{B}_{j} \Bigg) \log r +\log\theta(\vec\nu(r))+C+\bigO(r^{-1/2}) \Bigg), \nonumber
\end{align}
as $r \to + \infty$, where $C$ is independent of $r$. 
\end{theorem}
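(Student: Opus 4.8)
The displayed asymptotic formula for $F(r\vec{x})$ will follow from Theorem \ref{thm: main result - DIZ analogue} once \eqref{good error term inside main thm dioph} is in hand: substituting \eqref{good error term inside main thm dioph} into \eqref{asymptotics in main thm}, the terms $-\tfrac{1}{32}\mathcal{B}_j\log r$ combine with $\tfrac{1-4\alpha^2}{16}\log r$ into the stated $\log r$ coefficient, the constants $C_1,\ldots,C_{2g+1}$ merge with $\widetilde C$ into one $r$-independent constant $C$, and the $2g+1$ error terms $\bigO(r^{-1/2})$ add up. So the entire task is to prove \eqref{good error term inside main thm dioph}. The first step is to turn the integral into an integral along a genuine linear flow: substituting $t=s^2$ gives $\int_M^r \mathcal{B}(-x_j,\vec\nu(t))\tfrac{dt}{t}=2\int_{\sqrt M}^{\sqrt r}\mathcal{B}(-x_j,\vec\nu(s^2))\tfrac{ds}{s}$, and by \eqref{intro: nu} the map $s\mapsto\vec\nu(s^2)=-\vec{\tilde\alpha}-\tfrac{s}{2\pi}\vec\Omega \bmod\mathbb Z^g$ (where $\vec{\tilde\alpha}=(\tilde\alpha_1,\ldots,\tilde\alpha_g)^t$) is a linear flow on $\mathbb R^g/\mathbb Z^g$ with direction $\vec\Omega$.

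Next I would record that the function $\vec u\mapsto\mathcal{B}(-x_j,\vec u)$ is real-analytic (in particular $C^\infty$) on the torus $\mathbb R^g/\mathbb Z^g$. Away from the finitely many $\vec u$ at which a zero $b_k(\vec u)$ touches a branch point, this is immediate from the $\theta$-quotient expression \eqref{B} together with $q(-x_j)\neq0$ (the zeros of $q$ are interior to the intervals, while $-x_j$ is always an endpoint); and since $\prod_k(-x_j-b_k(\vec u))$ vanishes exactly when some $b_k(\vec u)=-x_j$, $\mathcal B(-x_j,\cdot)$ is finite throughout. At a value of $\vec u$ where $b_k(\vec u)$ reaches an endpoint $-x_{2k}$ (say) of its interval, one passes to the local holomorphic coordinate $\zeta=\sqrt{z+x_{2k}}$ on $X$: the $b_k$ are simple zeros of the section $P\mapsto\theta(\vec\varphi(P)+\tfrac{\vec e_1}{2}+\vec u)$ lying in pairwise disjoint closed intervals (so they never collide), hence the corresponding zero $\zeta_k(\vec u)$ varies real-analytically by the implicit function theorem, and $b_k(\vec u)=-x_{2k}+\zeta_k(\vec u)^2$ is real-analytic in $\vec u$. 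Consequently the Fourier coefficients $c_{\vec m}:=\int_{\mathbb R^g/\mathbb Z^g}\mathcal B(-x_j,\vec u)e^{-2\pi i\vec m^t\vec u}\,d\vec u$ decay faster than any power: for every $N$ there is $C_N$ with $|c_{\vec m}|\le C_N(1+\|\vec m\|)^{-N}$.

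Then comes the core Fourier--plus--integration-by-parts argument. Expanding and inserting the flow, $\mathcal B(-x_j,\vec\nu(s^2))=\sum_{\vec m\in\mathbb Z^g}c_{\vec m}e^{-2\pi i\vec m^t\vec{\tilde\alpha}}e^{-is\,\vec m^t\vec\Omega}$, the series converging absolutely and uniformly in $s$. The mean identifies the constant $\mathcal B_j$ of \eqref{leading term is a space average}: separating the modes with $\vec m^t\vec\Omega=0$ and using $|\tfrac1T\int_0^Te^{-is\omega}ds|\le\tfrac{2}{T|\omega|}$ for $\omega\neq0$ gives
\[
\Bigl|\tfrac1T\textstyle\int_0^T\mathcal B(-x_j,\vec\nu(s^2))\,ds-\sum_{\vec m:\,\vec m^t\vec\Omega=0}c_{\vec m}e^{-2\pi i\vec m^t\vec{\tilde\alpha}}\Bigr|\le\tfrac{2}{T}\!\!\sum_{\vec m:\,\vec m^t\vec\Omega\neq0}\!\!\tfrac{|c_{\vec m}|}{|\vec m^t\vec\Omega|},
\]
and the last sum is finite because \eqref{good diophantine prop} yields $\tfrac{1}{|\vec m^t\vec\Omega|}\le\delta_1^{-1}\|\vec m\|^{\delta_2}$ while $|c_{\vec m}|\le C_N\|\vec m\|^{-N}$ with $N>g+\delta_2$; hence $\mathcal B_j=\sum_{\vec m:\,\vec m^t\vec\Omega=0}c_{\vec m}e^{-2\pi i\vec m^t\vec{\tilde\alpha}}$ (and the time average converges at rate $\bigO(1/T)$). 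Writing $g_j(s):=\mathcal B(-x_j,\vec\nu(s^2))-\mathcal B_j=\sum_{\vec m:\,\vec m^t\vec\Omega\neq0}c_{\vec m}e^{-2\pi i\vec m^t\vec{\tilde\alpha}}e^{-is\,\vec m^t\vec\Omega}$, its antiderivative $G_j(s):=\int_{\sqrt M}^s g_j(u)\,du=\sum_{\vec m:\,\vec m^t\vec\Omega\neq0}c_{\vec m}e^{-2\pi i\vec m^t\vec{\tilde\alpha}}\tfrac{e^{-is\vec m^t\vec\Omega}-e^{-i\sqrt M\vec m^t\vec\Omega}}{-i\vec m^t\vec\Omega}$ obeys
\[
\sup_{s\ge\sqrt M}|G_j(s)|\le 2\!\!\sum_{\vec m:\,\vec m^t\vec\Omega\neq0}\!\!\tfrac{|c_{\vec m}|}{|\vec m^t\vec\Omega|}\le\tfrac{2}{\delta_1}\!\!\sum_{\vec m\in\mathbb Z^g\setminus\{0\}}\!\!\|\vec m\|^{\delta_2}|c_{\vec m}|=:K<\infty .
\]
Integrating by parts and using $G_j(\sqrt M)=0$,
\[
2\int_{\sqrt M}^{\sqrt r}\mathcal B(-x_j,\vec\nu(s^2))\tfrac{ds}{s}=\mathcal B_j(\log r-\log M)+\tfrac{2G_j(\sqrt r)}{\sqrt r}+2\int_{\sqrt M}^{\sqrt r}\tfrac{G_j(s)}{s^2}\,ds,
\]
and since $|G_j|\le K$, the last integral converges as $r\to\infty$ with remaining tail $\bigO(1/\sqrt r)$ and $\tfrac{2G_j(\sqrt r)}{\sqrt r}=\bigO(1/\sqrt r)$; this gives \eqref{good error term inside main thm dioph} with $C_j=-\mathcal B_j\log M+2\int_{\sqrt M}^{\infty}\tfrac{G_j(s)}{s^2}\,ds$, which is independent of $r$.

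The main obstacle is the interplay between the small divisors $1/|\vec m^t\vec\Omega|$ and the decay of the Fourier coefficients $c_{\vec m}$: the bound $\sup_{s}|G_j(s)|<\infty$ is precisely what converts the $o(\log r)$ of \eqref{leading term is a space average} into a convergent constant plus $\bigO(r^{-1/2})$, and it is exactly here that \eqref{good diophantine prop} is indispensable — without a Diophantine hypothesis the divisors $|\vec m^t\vec\Omega|$ can be too small along a subsequence of $\vec m$, so $\sum_{\vec m^t\vec\Omega\neq0}|c_{\vec m}|/|\vec m^t\vec\Omega|$ need not converge and the antiderivative $G_j$ need not be bounded. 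The remaining technical points — justifying term-by-term integration of the Fourier series (handled by its absolute and uniform convergence, from the rapid decay of $c_{\vec m}$) and the real-analyticity of $\mathcal B(-x_j,\cdot)$ at the exceptional $\vec u$ (handled by the branch-coordinate argument above) — are routine.
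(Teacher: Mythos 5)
Your proposal is correct and follows essentially the same route as the paper: expand $\mathcal{B}(-x_j,\cdot)$ in its Fourier series on $\mathbb{R}^{g}/\mathbb{Z}^{g}$, split into resonant and non-resonant modes, and use \eqref{good diophantine prop} together with the rapid decay of the coefficients to control the small divisors, so that your integration by parts with the bounded antiderivative $G_j$ is just a repackaging of the paper's exact identity \eqref{exact expression for integral main thm} combined with the $\bigO(t^{-1})$ Ces\`aro estimate of Proposition \ref{prop: gperiodic log int}. The only substantive addition is your explicit verification of the smoothness of $\vec{u}\mapsto\mathcal{B}(-x_j,\vec{u})$ (via the branch coordinate and $q(-x_j)\neq 0$), a point the paper leaves implicit when applying Proposition \ref{prop: gperiodic log int} to $\mathcal{H}=\mathcal{B}(-x_j,\cdot)$.
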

\begin{remark}
The constant $C$ in Theorem \ref{thm: main result - periodic} equals $\widetilde{C}-\frac{1}{32}(C_{1}+\cdots+C_{2g+1})$, where $\widetilde{C}$ is as in \eqref{asymptotics in main thm} and $C_{1},\ldots,C_{2g+1}$ are as in Theorem \ref{thm: main result - periodic}.
\end{remark}

\begin{theorem}[The ergodic case]\label{thm: main result - ergodic}
Let $0<x_1<x_2<\cdots<x_{2g+1}<+\infty$ and $\alpha>-1$ be fixed, and assume that $(\Omega_{1},\Omega_{2},\ldots,\Omega_{g}) \in \mathcal{E}$. 
In this case, \eqref{leading term is a space average} can be improved to
\begin{align}\label{explicit leading term}
& \int_{M}^{r} \mathcal{B}(-x_j,\vec{\nu}(t))\frac{dt}{t} = 2 \log r + o(\log r) \qquad \mbox{as } r \to + \infty, \quad j=1,\ldots,2g+1.
\end{align}
Therefore, \eqref{asymptotics in main thm} can be improved to
\begin{align*}
& F(r\vec{x}) = \exp \left( c\, r - d_1 \sqrt{r} - \frac{g+2\alpha^2}{8} \log r+o(\log r) \right) \qquad \mbox{as } r \to + \infty.
\end{align*}
\end{theorem}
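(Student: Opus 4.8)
The plan is to deduce Theorem~\ref{thm: main result - ergodic} from Theorem~\ref{thm: main result - DIZ analogue} by showing that, under the hypothesis $(\Omega_1,\ldots,\Omega_g)\in\mathcal{E}$, one has $\mathcal{B}_j=2$ for every $j=1,\ldots,2g+1$, where $\mathcal{B}_j$ is the constant in \eqref{leading term is a space average}. Granting this, \eqref{explicit leading term} is immediate from the second part of Theorem~\ref{thm: main result - DIZ analogue}, and substituting $\mathcal{B}_j=2$ into \eqref{asymptotics in main thm} turns the coefficient of $\log r$ into
\[
\frac{1-4\alpha^2}{16}-\frac{1}{32}\sum_{j=1}^{2g+1}\mathcal{B}_j=\frac{1-4\alpha^2}{16}-\frac{2g+1}{16}=-\frac{g+2\alpha^2}{8}.
\]
The remaining terms $\log\theta(\vec\nu(r))$, $\widetilde C$ and $\bigO(r^{-1/2})$ in \eqref{asymptotics in main thm} are all $\bigO(1)$: the first because $\vec\nu(r)\in\mathbb{R}^g$ while the period matrix $\tau$ is purely imaginary (all branch points of $X$ being real), so that $\theta$ is strictly positive on $\mathbb{R}^g$ and, being $\mathbb{Z}^g$-periodic there, is bounded below by a positive constant. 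Hence these terms are absorbed into the $o(\log r)$ error, which yields the stated asymptotics for $F(r\vec x)$.

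So the task reduces to $\mathcal{B}_j=2$. Since $\vec\nu(t^2)=\vec\nu(0)-\tfrac{t}{2\pi}\vec\Omega$ by \eqref{intro: nu}, the orbit $t\mapsto\vec\nu(t^2)$ is an affine line with direction $-\tfrac{1}{2\pi}\vec\Omega$, and rescaling the time variable turns the definition of $\mathcal{B}_j$ in \eqref{leading term is a space average} into $\mathcal{B}_j=\lim_{T\to\infty}\tfrac1T\int_0^T\mathcal{B}\big(-x_j,\vec\nu(0)-s\vec\Omega\big)\,ds$. Under the assumption $(\Omega_1,\ldots,\Omega_g)\in\mathcal{E}$, Weyl's equidistribution theorem shows that $\{(\vec\nu(0)-s\vec\Omega)\bmod\mathbb{Z}^g\}_{s>0}$ is uniformly distributed in $\mathbb{R}^g/\mathbb{Z}^g$, so the time average equals the space average:
\[
\mathcal{B}_j=\int_{\mathbb{R}^g/\mathbb{Z}^g}\mathcal{B}(-x_j,\vec u)\,d\vec u .
\]
This last identity requires $\vec u\mapsto\mathcal{B}(-x_j,\vec u)$ to be continuous on the torus, which holds because $q(-x_j)\neq0$ — by Section~\ref{subsection: g-function} the zeros of $q$ lie in the open bands $(-x_{2k+1},-x_{2k})$, none of which contains a branch point $-x_j$ — and because $\vec u\mapsto(b_1(\vec u),\ldots,b_g(\vec u))$ is continuous by Proposition~\ref{ThetaIdentity}.

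It remains to evaluate $\int_{\mathbb{R}^g/\mathbb{Z}^g}\mathcal{B}(-x_j,\vec u)\,d\vec u$, which is the heart of the argument. Write $q(z)=\tfrac12\prod_{k=1}^g(z-z_k^0)$ with $z_k^0\in(-x_{2k+1},-x_{2k})$ the zeros of $q$, and set $\Phi(z):=\int_{\mathbb{R}^g/\mathbb{Z}^g}\mathcal{B}(z,\vec u)\,d\vec u$ for $z\notin\{z_1^0,\ldots,z_g^0\}$. By \eqref{calBdef}, $\Phi(z)=\widetilde P(z)/q(z)$, where $\widetilde P(z):=\int_{\mathbb{R}^g/\mathbb{Z}^g}\prod_{k=1}^g(z-b_k(\vec u))\,d\vec u$ is a monic real polynomial of degree $g$ (its lower coefficients are torus averages of the elementary symmetric functions of $b_1(\vec u),\ldots,b_g(\vec u)$). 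Since $\Phi(z)\to2$ as $z\to\infty$, it suffices to show that the $z_k^0$ are roots of $\widetilde P$, i.e.
\[
\int_{\mathbb{R}^g/\mathbb{Z}^g}\prod_{l=1}^g\big(z_k^0-b_l(\vec u)\big)\,d\vec u=0,\qquad k=1,\ldots,g;
\]
indeed this forces $\widetilde P(z)=\prod_{k=1}^g(z-z_k^0)=2q(z)$, hence $\Phi\equiv2$ and in particular $\mathcal{B}_j=\Phi(-x_j)=2$. To prove the displayed vanishing I would use the $\theta$-function representation \eqref{B} of $\mathcal{B}$ together with Riemann's theorem on the zeros of $P\mapsto\theta(\vec\varphi(P)+\tfrac{\vec e_1}{2}+\vec u)$: the Abel sum $\sum_l\vec\varphi(Q_l(\vec u))$ of these $g$ zeros depends affinely and bijectively on $\vec u$, which identifies the pushforward of Lebesgue measure on the torus under $\vec u\mapsto(b_1(\vec u),\ldots,b_g(\vec u))$ with an explicit measure on $\prod_k[-x_{2k+1},-x_{2k}]$ built from the holomorphic differentials \eqref{intro: omegaVec}. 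The vanishing then becomes a residue identity on $X$: the rational function $z\mapsto\mathcal{B}(z,\vec u)-2$ has simple poles only at the $z_k^0$, and the $\vec u$-average of its residue at $z_k^0$ vanishes by the normalization $\oint_{A_m}\tfrac{q(s)\,ds}{\sqrt{\mathcal{R}(s)}}=0$ of $q$ from \eqref{prop of omega and q in intro} (equivalently, $z_k^0$ is the mean of $b_k(\vec u)$ against the relevant marginal). This computation — the bookkeeping of sheets, orientations, and the normalizations \eqref{intro: omegaVec}--\eqref{intro: q}, and the identification of the averaged symmetric functions of the $\theta$-zeros with the coefficients of $q$ (a torus-averaged form of the classical hyperelliptic trace formulas) — is the main obstacle; everything else is routine once $\int_{\mathbb{R}^g/\mathbb{Z}^g}\mathcal{B}(-x_j,\vec u)\,d\vec u=2$ is in hand.
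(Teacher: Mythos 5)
Your reduction of the theorem to the single identity $\int_{\mathbb{R}^g/\mathbb{Z}^g}\prod_{k=1}^g(z-b_k(\vec u))\,d\vec u=2q(z)$ follows the paper's strategy: time average equals space average by ergodicity of the linear flow (the paper invokes Birkhoff's theorem, you invoke Weyl equidistribution, which is equally valid here), the coefficient arithmetic is correct, and $\log\theta(\vec\nu(r))=\bigO(1)$ since $\theta$ is positive and $\mathbb{Z}^g$-periodic on $\mathbb{R}^g$. The problem is that the identity itself --- the heart of the proof --- is not established. Your argument for it is circular: the statement that ``the $\vec u$-average of the residue of $\mathcal{B}(z,\vec u)-2$ at $z_k^0$ vanishes'' is literally a restatement of the desired vanishing $\int\prod_{l}(z_k^0-b_l(\vec u))\,d\vec u=0$, because that residue equals $\prod_{l}(z_k^0-b_l(\vec u))/q'(z_k^0)$; and the parenthetical ``equivalently, $z_k^0$ is the mean of $b_k(\vec u)$ against the relevant marginal'' is an equivalence only when $g=1$. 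For $g\geq2$ the pushforward of Lebesgue measure under $\vec u\mapsto(b_1(\vec u),\dots,b_g(\vec u))$ has density proportional to $\det(\varphi_i'(z_j))_{i,j=1}^g$ (this is exactly what the Abel-map change of variables you allude to produces), which is not a product measure; hence the average of the product $\prod_l(z_k^0-b_l)$ does not reduce to a statement about a single marginal, and the normalization $\oint_{A_m}q(s)\,ds/\sqrt{\mathcal{R}(s)}=0$ does not by itself yield the vanishing.

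What closes this gap in the paper is a concrete two-step computation. First (Proposition \ref{B avg q relation}), using that $\vec\varphi|_{\mathcal{A}}-\mathcal{K}-\frac{\vec e_1}{2}$ is an orientation-preserving diffeomorphism onto $\mathbb{R}^{g}/\mathbb{Z}^{g}$ (Proposition \ref{varphiAprop} and Lemma \ref{orientationlemma}), the space average is rewritten as iterated integrals over the cycles $A_r-A_{r+1}$; the factor $\prod_k(z_0-z_k)$ is absorbed row-wise into the determinant $\det(\varphi_i'(z_j))$, and the normalization $\oint_{A_k}\omega_j=\delta_{jk}$ together with column operations gives $\langle\prod_k(z_0-b_k(\cdot))\rangle=\det(z_0I-T)$ with $T_{ij}=\int_{A_i}z\,\omega_j$. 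Second (Proposition \ref{qT relation}), one shows $q(z)=\frac12\det(zI-T)$ by writing $T=\hat{\mathbb{A}}\mathbb{A}^{-1}$ and observing that $\mathbb{A}^{-1}\hat{\mathbb{A}}$ is a companion-type matrix whose last column is $-2(q_0,\dots,q_{g-1})^t$; this is where the defining conditions \eqref{intro: q} on $q$ enter, through linear algebra rather than a residue or mean argument. Your outline names the right ingredients (the measure induced via the Abel map, the normalization of $q$) but stops short of this computation, so as written the proposal has a genuine gap at its central step.
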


By combining Theorems \ref{thm: main result - periodic} and \ref{thm: main result - ergodic}, we immediately obtain the following.

\begin{corollary}[The ``good Diophantine" and ergodic case]\label{cor: ergo+dio}
Let $0<x_1<x_2<\cdots<x_{2g+1}<+\infty$ and $\alpha>-1$ be fixed, and assume that $(\Omega_{1},\Omega_{2},\ldots,\Omega_{g}) \in \mathcal{D} \cap \mathcal{E}$. Then
\begin{align*}
& F(r\vec{x}) = \exp \left( c\, r - d_1 \sqrt{r} - \frac{g+2\alpha^2}{8} \log r+\log\theta(\vec\nu(r))+ C + \bigO(r^{-\frac{1}{2}}) \right) \qquad \mbox{as } r \to + \infty,
\end{align*}
where $C$ is independent of $r$.
\end{corollary}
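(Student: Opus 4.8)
The plan is to recognize that Corollary~\ref{cor: ergo+dio} is purely the conjunction of Theorems~\ref{thm: main result - periodic} and~\ref{thm: main result - ergodic}, so once both are available the proof is a short bookkeeping argument with no new analytic input. The one point requiring a moment's care is to extract, from the ergodic theorem, the exact value of the space average $\mathcal{B}_j$, and to verify that the coefficient of $\log r$ simplifies as claimed.

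First I would read off the value of $\mathcal{B}_j$. Since $(\Omega_1,\ldots,\Omega_g)\in\mathcal{E}$, Theorem~\ref{thm: main result - ergodic} applies and gives, for each $j=1,\ldots,2g+1$,
\[
\int_M^r \mathcal{B}(-x_j,\vec\nu(t))\,\frac{dt}{t} = 2\log r + o(\log r) \qquad \text{as } r\to+\infty .
\]
On the other hand, \eqref{leading term is a space average} in Theorem~\ref{thm: main result - DIZ analogue} writes the very same integral as $\mathcal{B}_j\log r + o(\log r)$. Subtracting, $(\mathcal{B}_j-2)\log r = o(\log r)$ as $r\to+\infty$, which forces $\mathcal{B}_j = 2$ for every $j$.

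Next I would feed this into the Diophantine case. Since also $(\Omega_1,\ldots,\Omega_g)\in\mathcal{D}$, Theorem~\ref{thm: main result - periodic} upgrades the error to $\bigO(r^{-1/2})$, i.e. $\int_M^r \mathcal{B}(-x_j,\vec\nu(t))\,\frac{dt}{t} = \mathcal{B}_j\log r + C_j + \bigO(r^{-1/2})$ with $C_j$ independent of $r$; substituting $\mathcal{B}_j=2$ gives $2\log r + C_j + \bigO(r^{-1/2})$. Plugging this, for $j=1,\ldots,2g+1$, into \eqref{asymptotics in main thm} (the term $\log\theta(\vec\nu(r))$ simply rides along unchanged, and the $2g+1$ errors $\bigO(r^{-1/2})$ absorb into the existing $\bigO(r^{-1/2})$), the logarithmic terms combine as
\begin{align*}
&\frac{1-4\alpha^2}{16}\log r - \frac{1}{32}\sum_{j=1}^{2g+1}\bigl(2\log r + C_j\bigr) \\
&\qquad = \Bigl(\frac{1-4\alpha^2}{16} - \frac{2g+1}{16}\Bigr)\log r - \frac{1}{32}\sum_{j=1}^{2g+1}C_j = -\frac{g+2\alpha^2}{8}\log r - \frac{1}{32}\sum_{j=1}^{2g+1}C_j .
\end{align*}
Setting $C := \widetilde{C} - \frac{1}{32}\sum_{j=1}^{2g+1}C_j$ — which is independent of $r$, and in fact also of $M$, since the left-hand side of \eqref{asymptotics in main thm} does not depend on $M$ while every other term on the right is $M$-free — yields precisely the asserted formula.

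As for the main obstacle: there really is none at the level of the corollary, because all the hard work has been done upstream — the identification $\mathcal{B}_j = 2$ via Birkhoff's ergodic theorem lives in Theorem~\ref{thm: main result - ergodic}, and the passage from an $o(\log r)$ to an $\bigO(r^{-1/2})$ error lives in Theorem~\ref{thm: main result - periodic}. The only two things to be careful about are the comparison of \eqref{explicit leading term} with \eqref{leading term is a space average} that pins down $\mathcal{B}_j$, and the elementary simplification $\frac{1-4\alpha^2}{16}-\frac{2g+1}{16} = -\frac{g+2\alpha^2}{8}$.
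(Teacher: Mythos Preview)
Your proposal is correct and follows exactly the approach the paper intends: the paper simply states that Corollary~\ref{cor: ergo+dio} is obtained ``by combining Theorems~\ref{thm: main result - periodic} and~\ref{thm: main result - ergodic}'' and gives no further argument. Your extraction of $\mathcal{B}_j=2$ by comparing \eqref{explicit leading term} with \eqref{leading term is a space average}, followed by substitution into the formula of Theorem~\ref{thm: main result - periodic} and the arithmetic $\frac{1-4\alpha^2}{16}-\frac{2g+1}{16}=-\frac{g+2\alpha^2}{8}$, is precisely the intended bookkeeping, and your identification of $C$ matches the Remark following Theorem~\ref{thm: main result - periodic}.
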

\begin{remark}
Corollary \ref{cor: ergo+dio} has been verified numerically for $g=1,2,3$ and several choices of $x_{1},\ldots,x_{2g+1}$ using \cite{Bornemann}.
\end{remark}

The case $g=1$ is simpler because both \eqref{good diophantine prop} and \eqref{ergodic condition in thm} are automatically satisfied for any choice of $0<x_1<x_2<x_3<+\infty$. Hence, Corollary \ref{cor: ergo+dio} always applies when $g = 1$, and we get the following result.

\begin{corollary}[The case $g=1$]\label{coro:g=1}
Let $g=1$ and fix $\alpha>-1$, $0<x_1<x_2<x_3<+\infty$. As $r\to+\infty$, 
\begin{align*}
F(r(x_{1},x_{2},x_{3}))=\exp\left(c\,r-d_1\sqrt{r}-\frac{1+2\alpha^2}{8}\log r +\log\theta(\nu_{1}(r))+C+\bigO(r^{-1/2})\right),
\end{align*}
where $C$ is independent of $r$.
\end{corollary}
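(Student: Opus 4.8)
The plan is to obtain Corollary \ref{coro:g=1} as a direct specialization of Corollary \ref{cor: ergo+dio}. Concretely, I would show that when $g=1$ the hypothesis $(\Omega_{1},\ldots,\Omega_{g})\in\mathcal{D}\cap\mathcal{E}$ of Corollary \ref{cor: ergo+dio} is automatically fulfilled, and then read off the stated asymptotics by setting $g=1$ in the conclusion of that corollary. The only input needed beyond Corollary \ref{cor: ergo+dio} is the sign information $\Omega_{1}\in(0,+\infty)$, which is part of \eqref{intro: Omegaj}; in particular $\Omega_{1}\neq 0$.

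First I would verify the ergodicity condition. For $g=1$ the vector $\vec{\Omega}$ is just the single real number $\Omega_{1}$, and \eqref{ergodic condition in thm} reads: if $n_{1}\in\mathbb{Z}$ satisfies $\Omega_{1}n_{1}=0$, then $n_{1}=0$. Since $\Omega_{1}>0$, this holds trivially, so $\Omega_{1}\in\mathcal{E}$. Next I would verify the ``good Diophantine" condition \eqref{good diophantine prop}, which for $g=1$ requires $\delta_{1},\delta_{2}>0$ with $|m\Omega_{1}|\geq\delta_{1}|m|^{-\delta_{2}}$ for every nonzero $m\in\mathbb{Z}$. Choosing $\delta_{2}=1$ and $\delta_{1}=\Omega_{1}>0$ works, because $|m\Omega_{1}|=|m|\,\Omega_{1}\geq\Omega_{1}\geq\Omega_{1}|m|^{-1}$ for all integers with $|m|\geq 1$. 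Hence $\Omega_{1}\in\mathcal{D}$, and therefore $(\Omega_{1})\in\mathcal{D}\cap\mathcal{E}$, so Corollary \ref{cor: ergo+dio} applies.

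It then remains only to specialize. Setting $g=1$ in the conclusion of Corollary \ref{cor: ergo+dio}, the coefficient of $\log r$ becomes $-\frac{g+2\alpha^{2}}{8}=-\frac{1+2\alpha^{2}}{8}$, and since $\vec{\nu}(r)=\nu_{1}(r)$ is one-dimensional, the term $\log\theta(\vec{\nu}(r))$ becomes $\log\theta(\nu_{1}(r))$. This is exactly the asserted formula, with $C$ independent of $r$; tracking the remarks following Theorems \ref{thm: main result - periodic} and \ref{thm: main result - ergodic}, one has explicitly $C=\widetilde{C}-\frac{1}{32}(C_{1}+C_{2}+C_{3})$ with $\widetilde{C}$ as in \eqref{asymptotics in main thm} and $C_{1},C_{2},C_{3}$ as in Theorem \ref{thm: main result - periodic}. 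There is essentially no obstacle here: the argument is purely bookkeeping once Corollary \ref{cor: ergo+dio} is in hand, the only (trivial) point being that a single positive real number is vacuously both rationally independent and ``well approximable from below" in the sense of \eqref{good diophantine prop}.
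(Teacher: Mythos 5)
Your proposal is correct and follows exactly the paper's route: the paper states (just above the corollary) that for $g=1$ both \eqref{good diophantine prop} and \eqref{ergodic condition in thm} hold automatically since $\Omega_{1}>0$, so Corollary \ref{cor: ergo+dio} applies and specializing $g=1$ gives the stated asymptotics. Your explicit verification of the Diophantine bound with $\delta_{1}=\Omega_{1}$, $\delta_{2}=1$ is just a spelled-out version of the same observation.
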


\begin{remark}\label{remark:g=0}
(The case $g=0$). If $g=0$, then \eqref{intro: c const} gives $c=-\frac{x_{1}}{4}$ and \eqref{def of d1} gives
\begin{align*}
d_{1} = \frac{\alpha}{4}a_{0,1} = -\frac{\alpha}{2} \int_{-x_1}^0\frac{ds}{\sqrt{s+x_1}} = - \alpha \sqrt{x_{1}}.
\end{align*}
Since both \eqref{good diophantine prop} and \eqref{ergodic condition in thm} are automatically satisfied for $g=0$, using Corollary \ref{cor: ergo+dio}, we obtain
\begin{align*}
F(rx_{1})=\exp\left(-\frac{rx_{1}}{4}+\alpha\sqrt{rx_{1}}-\frac{\alpha^2}{4}\log r +C+\bigO(r^{-1/2})\right) \qquad \mbox{as } r \to + \infty,
\end{align*}
which is consistent with \eqref{largegapBessel1}.
\end{remark}

\paragraph{Related work.} The determination of large gap asymptotics is a classical problem in random matrix theory with a long history. There exist various results on large gap asymptotics in the case of a gap on a \textit{single} interval (the so-called ``one-cut regime"), see \cite{dCM1973, D1976, W1971, K2003, EhrSine, DIKZ2007} for the sine process, \cite{DIK, BBD2008} for the Airy process, \cite{Ehr2010, DeiftKrasVasi} for the Bessel process, \cite{CGS2019, CLM1, CLM2} for the Wright's generalized Bessel and Meijer-$G$ point processes, \cite{DXZ2020} for the Pearcey process, \cite{BW1983, BB1995, BDIK2015, CharlierSine, BothnerBuckingham, ChCl3, CaCl, CCR2020, BIP, CharlierBessel, ChCl4, DXZ2020 thinning} for thinned-deformations of these universal point processes, and \cite{VV2010, RRZ2011, DVAiry2013} for the sine-$\beta$, Airy-$\beta$ and Bessel-$\beta$ point processes. We also refer to \cite{K2009} and \cite{F2014} for two overviews.

Large gap asymptotics in the case of {\it multiple} intervals has been much less explored. For the sine process, this problem was first investigated by Widom \cite{Widom1995}, who obtained an explicit expression for the leading term in the asymptotics, and characterized the oscillations in terms of the solution to a Jacobi inversion problem. These oscillations were later described more explicitly by Deift, Its, and Zhou \cite{DIZ} in terms of $\theta$-functions. Actually, the results of \cite{DIZ} are, in many respects, similar to the results of our first two theorems (Theorem \ref{thm: main result - DIZ analogue} and Theorem \ref{thm: main result - periodic}). In particular, the asymptotic formula of \cite{DIZ} for the probability to observe $g$ gaps in the sine process involves (1) an explicit leading term, (2) some oscillations of order $1$ described in terms of $\theta$-functions, and (3) a rather complicated integral involving ratios of $\theta$-functions. As in our case (see (\ref{leading term is a space average})), the latter integral grows logarithmically with the size of the intervals. It was also noted in \cite{DIZ} that, in the generic case where a certain vector has ``good Diophantine properties", this integral can be estimated with good control of the error term (this idea of \cite{DIZ} has been the main inspiration for our Theorem \ref{thm: main result - periodic}). However, the coefficient of the logarithmic term was written in \cite{DIZ}  as a time average of ratios of $\theta$-functions (cf. our formula \eqref{leading term is a space average}) and was not simplified. Recently, for the case of two intervals (this corresponds to a genus $1$ situation), Fahs and Krasovsky in \cite{FahsKrasovsky2} showed that this coefficient is identically equal to $-1/2$. Large gap asymptotics for the Airy process in certain genus $1$ situations were recently computed in \cite{BCL2020, BCL2020 2, KraMarou}, and there too simple expressions for the log coefficients appearing in the large gap asymptotics were obtained. We also mention that the multiplicative constants in the asymptotics were explicitly computed in \cite{FahsKrasovsky2, KraMarou}.

In Theorem \ref{thm: main result - ergodic}, we provide a simplified formula for the coefficient of the logarithmic term appearing in the large $r$ asymptotics of $F(r\vec{x})$ for an arbitrary $g \geq 0$, in the generic case when the flow \eqref{the linear flow intro} is ergodic in $\mathbb{R}^{g}/ \mathbb{Z}^{g}$. The proof of Theorem \ref{thm: main result - ergodic} requires the novel use of Birkhoff's ergodic theorem. More precisely, using the ergodicity of the flow \eqref{the linear flow intro}, we rewrite the one-dimensional integrals $\mathcal{B}_{j}$ of \eqref{leading term is a space average} (which we call \textit{time averages}) as $g$-dimensional integrals over $\mathbb{R}^{g}/ \mathbb{Z}^{g}$ (which we call \textit{space averages}). Quite remarkably, these space averages can in turn be evaluated explicitly after a non-trivial change of variables involving the Abel map, see Proposition \ref{varphiAprop} and Lemma \ref{orientationlemma}. 
We expect that a similar simplification of the logarithmic term can be achieved by means of Birkhoff's ergodic theorem for any genus $g \geq 1$ also for other point processes such as the sine and Airy processes.
As noted above Corollary \ref{coro:g=1}, the case $g=1$ is somewhat simpler in several respects; in particular, for $g=1$ \textit{every} flow is both ergodic and has ``good Diophantine properties". Hence there is no need to invoke Birkhoff's ergodic theorem when $g = 1$, see \cite{FahsKrasovsky2, BCL2020, BCL2020 2, KraMarou}. For these reasons, we feel that Theorem \ref{thm: main result - ergodic} is an important novel contribution of the current paper.

\paragraph{Overview of the paper.}  In Section \ref{section:diffid}, we link the Fredholm determinant $F(r\vec{x})$ to the solution $\Phi$ of a certain Riemann--Hilbert (RH) problem. In Sections \ref{section:RH1}--\ref{section:smallnorm}, we perform an asymptotic analysis of $\Phi$ by means of the Deift/Zhou steepest descent method. This analysis is split into several sections as follows. In Section \ref{section:RH1}, we normalize the RH problem for $\Phi$ at infinity using an appropriate $\gg$-function, and then we proceed with the opening of the lenses. In Section \ref{section: global parametrix}, we construct the global parametrix $P^{(\infty)}$ in terms of Riemann $\theta$-functions. In Section \ref{section: local parametrix}, we construct local parametrices at $z=-x_j$, $j=1,\ldots,2g+1$; these are built using the well-known solution of the Bessel model RH problem (see Appendix \ref{Section:Appendix}). In Section \ref{section:smallnorm}, we complete our RH analysis with a small norm analysis. Section \ref{section: proof of thms} relies on the results of Sections \ref{section:RH1}-\ref{section:smallnorm} and contains the proofs of Theorems \ref{thm: main result - DIZ analogue}-\ref{thm: main result - ergodic}. Birkhoff's ergodic theorem is used in the proof of Theorem \ref{thm: main result - ergodic}.

\section{Differential identity for $F$}\label{section:diffid}
We begin by establishing a relationship between the Fredholm determinant $F(r\vec{x})$ and the solution to a particular Riemann-Hilbert problem.  The goal of this section is to prove the following Proposition. 
\begin{proposition}\label{prop: diff id}
Let $0<x_1<\cdots<x_{2g+1}<+\infty$ and $\alpha>-1$ be fixed.  We have the identity
\begin{align}\label{diff id}
\partial_{r} \log F(r\vec{x}) = \tfrac{1}{2ir}\Phi_{1,12}(r) + \tfrac{1-4\alpha^{2}}{16r},
\end{align}
where $F(\vec{x})$ is defined in \eqref{generating}, $\Phi_{1}(r) = \Phi_{1}(r;\vec{x},\alpha)$ is defined by
\begin{align}\label{def of Phi1}
\Phi_{1}(r) =  \lim_{z \to \infty}  rz \Big(\Phi(z)\big((rz)^{-\frac{\sigma_{3}}{4}}Me^{\sqrt{rz}\sigma_{3}}\big)^{-1} - I\Big), \quad M = \frac{1}{\sqrt{2}} \begin{pmatrix}
1 & i \\ i & 1
\end{pmatrix}, \quad \sigma_{3} = \begin{pmatrix}
1 & 0 \\ 0 & -1
\end{pmatrix},
\end{align}
and $\Phi(\cdot) = \Phi(\cdot;r, \vec{x},\alpha)$ is the unique solution of the following RH problem.
\end{proposition}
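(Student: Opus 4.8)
The plan is to pass to a fixed reference configuration of the intervals, differentiate the Fredholm determinant there, exploit the integrable structure of the Bessel kernel, and finally reintroduce the change of variables that defines $\Phi$. Substituting $x\mapsto rx$ in \eqref{generating}--\eqref{Bessel kernel} shows that $F(r\vec{x})=\det(1-\mathbb{K}_r)$, where $\mathbb{K}_r$ acts on the fixed space $L^2(\mathcal{I}_g)$ with kernel
\begin{equation*}
\mathbb{K}_r(u,v)=rK(ru,rv)=\frac{\hat{f}_1(u)\hat{f}_2(v)-\hat{f}_2(u)\hat{f}_1(v)}{2(u-v)},\qquad \hat{f}_1(u)=J_\alpha(\sqrt{ru}),\quad \hat{f}_2(u)=\sqrt{ru}\,J_\alpha'(\sqrt{ru}).
\end{equation*}
For $\alpha>-1$ both functions are $\bigO(u^{\alpha/2})$ near the origin, hence lie in $L^2(\mathcal{I}_g)$, and $F(r\vec{x})>0$, so $1-\mathbb{K}_r$ is invertible and $\partial_r\log F(r\vec{x})=-\mathrm{tr}\big((1-\mathbb{K}_r)^{-1}\partial_r\mathbb{K}_r\big)$. (The existence and uniqueness of $\Phi$ asserted in the proposition follow from the same invertibility via the correspondence invoked below; the singularity of the RH problem at $z=0$ is mild enough for $\alpha>-1$ that the standard Liouville argument for uniqueness still applies.)

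The crucial point is that $\partial_r\mathbb{K}_r$ has rank one. Differentiating gives $\partial_r\hat{f}_1=\tfrac{1}{2r}\hat{f}_2$, and Bessel's equation $w^2J_\alpha''+wJ_\alpha'+(w^2-\alpha^2)J_\alpha=0$, used to remove $J_\alpha''$, gives $\partial_r\hat{f}_2=\big(\tfrac{\alpha^2}{2r}-\tfrac{u}{2}\big)\hat{f}_1$. Substituting these into the numerator of $\mathbb{K}_r(u,v)$, the $\hat{f}_2(u)\hat{f}_2(v)$ terms cancel and the remaining terms collapse to $\tfrac{u-v}{2}\hat{f}_1(u)\hat{f}_1(v)$, so that
\begin{equation*}
\partial_r\mathbb{K}_r(u,v)=\tfrac{1}{4}\,J_\alpha(\sqrt{ru})\,J_\alpha(\sqrt{rv}),
\end{equation*}
a rank-one trace-class operator. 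Consequently $\partial_r\log F(r\vec{x})=-\tfrac{1}{4}\big\langle\phi,(1-\mathbb{K}_r)^{-1}\phi\big\rangle_{L^2(\mathcal{I}_g)}$ with $\phi(u)=J_\alpha(\sqrt{ru})=\hat{f}_1(u)$.

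It remains to express this quadratic form through $\Phi$. The kernel $\mathbb{K}_r$ is of Its--Izergin--Korepin--Slavnov integrable type, $\mathbb{K}_r(u,v)=\frac{\vec{f}(u)^t\vec{g}(v)}{u-v}$ with $\vec{f}=(\hat{f}_1,\hat{f}_2)^t$, $\vec{g}=\tfrac{1}{2}(\hat{f}_2,-\hat{f}_1)^t$ and $\vec{f}^t\vec{g}\equiv0$; by the standard correspondence the resolvent of $\mathbb{K}_r$ is again integrable, with data built from the boundary values of the associated RH solution, and reading off the coefficient of $1/z$ in its expansion at infinity identifies $\langle\phi,(1-\mathbb{K}_r)^{-1}\phi\rangle$ with a multiple of the $(1,2)$ entry of that coefficient. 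The remainder of Section~\ref{section:diffid} then performs the explicit gauge transformation turning this RH problem — whose jumps are Bessel-function valued — into the RH problem for $\Phi$: it absorbs the Bessel functions into the Bessel model parametrix (Appendix~\ref{Section:Appendix}) and installs the normalization $(rz)^{-\sigma_3/4}Me^{\sqrt{rz}\sigma_3}$ at infinity. Transporting the identity for $\partial_r\log F$ through this transformation yields \eqref{diff id}: the factor $r^{-1}$ comes from the $rz$ built into the definition \eqref{def of Phi1} of $\Phi_1$, while the factor $i^{-1}$ and the additive term $\tfrac{1-4\alpha^2}{16r}$ both originate in the explicit matrix factors of the transformation — the former essentially from the matrix $M$, the latter from the $\alpha$-dependent subleading coefficient in the large-argument expansion of the Bessel parametrix (equivalently, from the hard-edge contribution at $z=0$).

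I expect the main obstacle to be the hard edge $z=0$. Because $\hat{f}_1$ and $\hat{f}_2$ are only $\bigO(u^{\alpha/2})$ there, the integrable-operator/RH dictionary must be used in a form that permits a Bessel-type singularity at the origin, and one must check that all the relevant traces and double integrals converge absolutely and that differentiation under the trace is legitimate for every $\alpha>-1$. The remaining, more computational, difficulty is to keep precise track of the explicit factors in the transformation — the $(rz)^{-\sigma_3/4}$, the matrix $M$, the exponential $e^{\sqrt{rz}\sigma_3}$, and the expansion of the Bessel parametrix — so as to land on the exact constants $\tfrac{1}{2ir}$ and $\tfrac{1-4\alpha^2}{16r}$ rather than merely on the correct structure.
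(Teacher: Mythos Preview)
Your approach is essentially the same as the paper's: rescale to a fixed configuration, differentiate the log-determinant, exploit the rank-one structure of $\partial_r\mathbb{K}_r(u,v)=\tfrac14 J_\alpha(\sqrt{ru})J_\alpha(\sqrt{rv})$, invoke the IIKS integrable-operator/RH correspondence to identify the resulting trace with $\tfrac{1}{2r}Y_{1,12}$, and then pass to $\Phi$ via a known gauge transformation involving the Bessel model parametrix. The paper carries out the last step by quoting an explicit formula from \cite{ChDoe} relating $Y$ and $\Phi$, from which $\Phi_{1,12}=iY_{1,12}+\widetilde{P}_{\mathrm{Be},1,12}$ with $\widetilde{P}_{\mathrm{Be},1,12}=\tfrac{1-4\alpha^2}{8i}$ drops out in one line; your sketch correctly anticipates both the origin of the $\tfrac{1}{2ir}$ factor and of the additive $\tfrac{1-4\alpha^2}{16r}$, but stops short of actually executing this transformation, so what remains is purely bookkeeping rather than a missing idea.
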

\subsubsection*{RH problem for $\Phi(\cdot) = \Phi(\cdot;r, \vec{x},\alpha)$}
\begin{itemize}
\item[(a)] $\Phi : \mathbb{C}\setminus \Sigma_{\Phi} \to \mathbb{C}^{2\times 2}$ is analytic, where the contour $\Sigma_{\Phi} = (-\infty,0]\cup \Sigma_+ \cup \Sigma_-$ is oriented as shown in Figure \ref{fig:modelRHcontours} with
\begin{equation*}
\Sigma_+ := -x_{2g+1}+ e^{\frac{2\pi i}{3}}(0,+\infty), \qquad \Sigma_- := -x_{2g+1}+ e^{-\frac{2\pi i}{3}}(0,+\infty).
\end{equation*}
\item[(b)] The limits of $\Phi(z)$ as $z$ approaches $\Sigma_{\Phi}\setminus \{-x_{2g+1},-x_{2g},\dots,-x_1,0\}$ from the left ($+$ side) and from the right ($-$ side) exist, are continuous on $\Sigma_{\Phi}\setminus \{-x_{2g+1},-x_{2g},\dots,-x_1,0\}$ and are denoted by $\Phi_+$ and $\Phi_-$ respectively. Furthermore, they are related by
\begin{align*}
& \Phi_{+}(z) = \Phi_{-}(z) \begin{pmatrix}
1 & 0 \\ e^{\pm \pi i\alpha} & 1
\end{pmatrix}, & & z \in \Sigma_\pm, \\
& \Phi_{+}(z) = \Phi_{-}(z) \begin{pmatrix}
0 & 1 \\ -1 & 0
\end{pmatrix}, & & z \in (-\infty,-x_{2g+1}), \\
& \Phi_{+}(z) = \Phi_{-}(z) \begin{pmatrix}
e^{\pi i\alpha} & 1 \\ 0 & e^{-\pi i\alpha}
\end{pmatrix}, & & z \in (-x_{2j},-x_{2j-1}), ~~~ j=1,\dots,g, \\
& \Phi_{+}(z) = \Phi_{-}(z) e^{\pi i\alpha\sigma_3}, & & z \in (-x_{2j+1},-x_{2j}), ~~~j=0,\dots,g,
\end{align*}
where $x_{0}:=0$.
\item[(c)] As $z \to \infty$, we have 
\begin{equation}\label{Phi inf}
\Phi(z) = \Big(I + \bigO(z^{-1})\Big)(rz)^{-\frac{\sigma_{3}}{4}}Me^{\sqrt{rz}\sigma_{3}},
\end{equation}
where the principal branch is chosen for each fractional power.

\item [(d)] As $z \to -x_{j}$, $j=1,\dots,2g+1$, we have 
\begin{equation}\label{def of Gj}
\Phi(z) = G_{j}(z) \begin{pmatrix}
1 & \frac{(-1)^{j+1}}{2\pi i} \log (rz+rx_{j}) \\ 0 & 1
\end{pmatrix} V_{j}(z) e^{\frac{\pi i\alpha}{2}\mathrm{sgn}(\im z)\sigma_{3}}H(z),
\end{equation}
where $\mathrm{sgn}$ is the signum function, $G_{j}$ is analytic in a neighborhood of $-x_{j}$ and satisfies $\det G_{j} \equiv 1$, $V_{j}(z)$ is defined by
\begin{align}
V_j(z)=I, \quad \text{if $j$ is odd}, \qquad\qquad V_{j}(z) = \left\{  \begin{array}{l l}
I, & \mbox{Im } z > 0, \\
\begin{pmatrix}
1 & -1 \\ 0 & 1
\end{pmatrix}, & \mbox{Im } z < 0,
\end{array} \right. \quad \text{if $j$ is even,}
\end{align}
and $H(z)$ is defined by
\begin{align}\label{def of H}
H(z)=\begin{cases}
I, & \text{for } -\frac{2\pi}{3}<\arg(z+x_{2g+1})<\frac{2\pi}{3}, \\
\begin{pmatrix} 1 & 0 \\ -e^{\pi i\alpha} & 1 \end{pmatrix} & \text{for } \frac{2\pi}{3}<\arg(z+x_{2g+1})<\pi, \\
\begin{pmatrix} 1 & 0 \\ e^{-\pi i\alpha} & 1 \end{pmatrix} & \text{for } -\pi<\arg(z+x_{2g+1})<-\frac{2\pi}{3}.
\end{cases}
\end{align}
\item[(e)] As $z$ tends to $0$, $\Phi$ takes the form
\begin{equation}\label{def of G_0}
\Phi(z) = G_{0}(z)(rz)^{\frac{\alpha}{2}\sigma_{3}}, \qquad \alpha > -1,
\end{equation}
where $G_{0}$ is analytic in a neighborhood of $0$ and satisfies $\det G_{0} \equiv 1$. 
\end{itemize}

\begin{figure}
\centering
\begin{tikzpicture}
\draw[fill] (0,0) circle (0.05);
\draw (0,0) -- (10,0);
\draw (0,0) -- (120:3);
\draw (0,0) -- (-120:3);
\draw (0,0) -- (-3,0);
\draw[fill] (1.6,0) circle (0.05);
\draw[fill] (4,0) circle (0.05);
\draw[fill] (5.4,0) circle (0.05);
\draw[fill] (8.4,0) circle (0.05);
\draw[fill] (10,0) circle (0.05);

\node at (0.2,-0.3) {$-x_{5}$};
\node at (1.6,-0.3) {$-x_{4}$};
\node at (4,-0.3) {$-x_{3}$};
\node at (5.4,-0.3) {$-x_{2}$};
\node at (8.4,-0.3) {$-x_{1}$};
\node at (10,-0.3) {$0$};

\node at (96:2) {$\begin{pmatrix} 1 & 0 \\ e^{\pi i \alpha} & 1 \end{pmatrix}$};
\node at (160:2) {$\begin{pmatrix} 0 & 1 \\ -1 & 0 \end{pmatrix}$};
\node at (-93:2) {$\begin{pmatrix} 1 & 0 \\ e^{-\pi i \alpha} & 1 \end{pmatrix}$};

\node at (2.8,0.6) {$\begin{pmatrix} e^{\pi i \alpha} & 1 \\ 0 & e^{-\pi i \alpha} \end{pmatrix}$};
\node at (6.9,0.6) {$\begin{pmatrix} e^{\pi i \alpha} & 1 \\ 0 & e^{-\pi i \alpha} \end{pmatrix}$};

\node at (0.8,0.3) {$e^{\pi i \alpha \sigma_{3}}$};
\node at (4.7,0.3) {$e^{\pi i \alpha \sigma_{3}}$};
\node at (9.2,0.3) {$e^{\pi i \alpha \sigma_{3}}$};

\draw[black,arrows={-Triangle[length=0.18cm,width=0.12cm]}]
(-120:1.5) --  ++(60:0.001);
\draw[black,arrows={-Triangle[length=0.18cm,width=0.12cm]}]
(120:1.3) --  ++(-60:0.001);
\draw[black,arrows={-Triangle[length=0.18cm,width=0.12cm]}]
(180:1.5) --  ++(0:0.001);

\draw[black,arrows={-Triangle[length=0.18cm,width=0.12cm]}]
(0:0.9) --  ++(0:0.001);
\draw[black,arrows={-Triangle[length=0.18cm,width=0.12cm]}]
(0:2.9) --  ++(0:0.001);
\draw[black,arrows={-Triangle[length=0.18cm,width=0.12cm]}]
(0:4.8) --  ++(0:0.001);
\draw[black,arrows={-Triangle[length=0.18cm,width=0.12cm]}]
(0:7) --  ++(0:0.001);
\draw[black,arrows={-Triangle[length=0.18cm,width=0.12cm]}]
(0:9.3) --  ++(0:0.001);

\end{tikzpicture}
\caption{The jump contour $\Sigma_{\Phi}$ and the associated jump matrices in the case when $g=2$.}
\label{fig:modelRHcontours}
\end{figure}
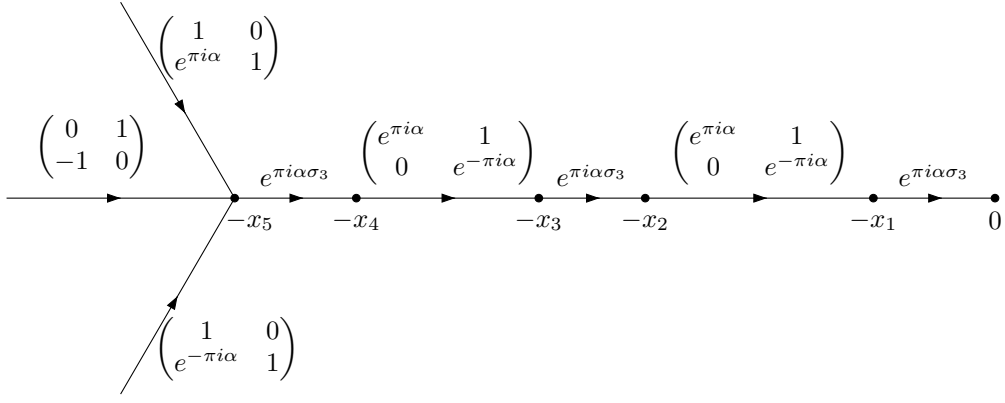
\begin{proof}[Proof of Proposition \ref{prop: diff id}]
We follow the method developed by Its, Izergin, Korepin and Slavnov \cite{IIKS}, and then further pursued in \cite{DIZ}. Note from \eqref{Bessel kernel} that $K$ can be rewritten as
\begin{align*}
K(x,y)=\frac{\vec{f}^{\hspace{0.6mm}t}(x)\vec{g}(y)}{x-y}, \qquad \vec{f}(x) := \frac{1}{2}\begin{pmatrix}
J_{\alpha}(\sqrt{x}) \\
\sqrt{x} J_{\alpha}^{\prime}(\sqrt{x})
\end{pmatrix}, \qquad \vec{g}(y) := \begin{pmatrix}
\sqrt{y} J_{\alpha}^{\prime}(\sqrt{y}) \\
-J_{\alpha}(\sqrt{y})
\end{pmatrix}.
\end{align*} 
Also, from \eqref{generating} and a direct rescaling, we get
\begin{align*}
F(r\vec{x}) = \det(I-\mathcal{K}|_{r\mathcal{I}_g}) = \det (I-\mathcal{K}_{r}|_{\mathcal{I}_g}),
\end{align*}
where $\mathcal{K}_{r}|_{\mathcal{I}_g}$ is the trace-class integral operator acting on $L^{2}(\mathcal{I}_{g})$ whose kernel is given by
\begin{align*}
K_{r}(x,y) := rK(rx,ry) = \frac{\vec{f}^{\hspace{0.6mm}t}(rx)\vec{g}(ry)}{x-y}.
\end{align*}
Using well-known identities for trace-class integral operators, we get
\begin{align}\label{lol2}
\partial_{r} \log F(r\vec{x})=\partial_{r} \log \det \big( I- \mathcal{K}_r|_{\mathcal{I}_g} \big)=\partial_{r} \text{Tr}\log \big( I- \mathcal{K}_r|_{\mathcal{I}_g} \big)=-\text{Tr}\big(\big( I- \mathcal{K}_r|_{\mathcal{I}_g} \big)^{-1}\partial_r\mathcal{K}_r|_{\mathcal{I}_g}\big).
\end{align}
A direct computation using \cite[eq. (8.401)]{GRtable} gives
\begin{align*}
\partial_{r}K_{r}(x,y) = \frac{1}{4}J_{\alpha}(\sqrt{rx})J_{\alpha}(\sqrt{ry}) = -\frac{1}{2}\vec{f}^{\hspace{0.6mm}t}(r x) \begin{pmatrix}
0 & 1 \\ 0 & 0
\end{pmatrix} \vec{g}(r y),
\end{align*}
and it follows that the kernel of the integral operator $\big( I- \mathcal{K}_r|_{\mathcal{I}_g} \big)^{-1}\partial_r\mathcal{K}_r|_{\mathcal{I}_g}$ is given by 
\begin{align*}
-\frac{1}{2}\vec{h}^{\hspace{0.1mm}t}(rx)\begin{pmatrix}
0 & 1 \\ 0 & 0
\end{pmatrix} \vec{g}(r y)=-\frac{1}{2}h_1(rx)g_2(ry),
\end{align*}
where $\vec{h}(\mu):=\Big((I-\mathcal{K}|_{r\mathcal{I}_g})^{-1} \vec{f}_{1}(\mu),(I-\mathcal{K}|_{r\mathcal{I}_g})^{-1} \vec{f}_{2}(\mu)\Big)^{t}$. Thus, by \eqref{lol2} we have
\begin{align*}
\partial_{r} \log \det \big( I- \mathcal{K}|_{r\mathcal{I}_g} \big)=\partial_{r} \log \det \big( I- \mathcal{K}_r|_{\mathcal{I}_g} \big)=\frac{1}{2}\int_{\mathcal{I}_g}h_1(rx)g_2(rx)dx=\frac{1}{2r}\int_{r\mathcal{I}_g}h_1(x)g_2(x)dx.
\end{align*}
On the other hand,
\begin{align*}
Y(z):=I-\int_{r\mathcal{I}_g}\frac{\vec{h}(x)\vec{g}^{\hspace{0.6mm}t}(x)}{x-z}dx=I+\frac{Y_1(r)}{z}+\bigO(z^{-2}) \qquad \mbox{as } z \to \infty
\end{align*}
where
\begin{align*}
Y_1(r)=\int_{r\mathcal{I}_g}\begin{pmatrix} h_1(x)g_1(x) & h_1(x)g_2(x) \\ h_2(x)g_1(x) & h_2(x)g_2(x) \end{pmatrix}dx,
\end{align*}
and so we conclude that
\begin{align}\label{logdetY112}
\partial_{r}\log F(r\vec{x})=\partial_{r} \log \det \big( I- \mathcal{K}|_{r\mathcal{I}_g} \big)=\frac{Y_{1,12}(r)}{2r}.
\end{align}
It follows from \cite[eq. (4.17)]{ChDoe} that we can express $Y$ in terms of $\Phi$ as follows
\begin{equation*}
\Phi(z) = \begin{pmatrix}
1 & 0 \\ \frac{i}{8}(4\alpha^{2}+3) & 1
\end{pmatrix} e^{\frac{\pi i }{4}\sigma_{3}} \sigma_{3}Y(-rz)\sigma_{3} \widetilde{P}_{\mathrm{Be}}(rz),
\end{equation*}
where $\widetilde{P}_{\mathrm{Be}}$, which is defined in \cite[eq. (A.7)]{ChDoe}, satisfies
\begin{equation*}
\widetilde{P}_{\mathrm{Be}}( z ) =  e^{-\frac{\pi i}{4}\sigma_{3}}
\begin{pmatrix}
1 & 0 \\ -\frac{i}{8}(4\alpha^{2}+3) & 1
\end{pmatrix}\left(I+\frac{\widetilde{P}_{\Be,1}}{z}+\bigO( z ^{-2})\right)  z ^{-\frac{\sigma_{3}}{4}} M e^{ z ^{\frac{1}{2}}\sigma_{3}} \qquad \mbox{as } z \to \infty,
\end{equation*}
with
\begin{equation}\label{PBe112}
    \widetilde{P}_{\Be,1,12}=\frac{1-4\alpha^{2}}{8i}.
\end{equation}
Using \eqref{Phi inf}, we infer that
\begin{align*}
\Phi_{1}(r) = \begin{pmatrix}
1 & 0 \\
\frac{i}{8}(4\alpha^{2}+3) & 1
\end{pmatrix}e^{\frac{ \pi i }{4}\sigma_{3}}\Big(-\sigma_{3} Y_{1}(r)\sigma_{3} \Big)e^{-\frac{\pi i}{4}\sigma_{3}} \begin{pmatrix}
1 & 0 \\
-\frac{i}{8}(4\alpha^{2}+3) & 1
\end{pmatrix} + \widetilde{P}_{\Be,1},
\end{align*}
and thus
\begin{align}\label{phi112 preliminary}
\Phi_{1,12}(r) = i Y_{1,12}(r) + \widetilde{P}_{\Be,1,12}.
\end{align}
The result now follows by using \eqref{PBe112} and \eqref{phi112 preliminary} in \eqref{logdetY112}.
\end{proof}


Our next goal is to obtain large $r$ asymptotics of $\Phi_{1,12}(r)$. For this, we will first perform a Deift--Zhou \cite{DeiftZhou} steepest descent analysis on the RH problem for $\Phi$. This analysis is carried out in Sections \ref{section:RH1}--\ref{section:smallnorm}.

\section{Steepest descent for $\Phi$: first steps}\label{section:RH1}
In this section, we proceed with the first steps of the steepest descent method.  We start by finding a $\gg$-function that will be used to normalize the RH problem for $\Phi$ at $\infty$.

\subsection{$\gg$-function}\label{subsection: g-function}
The function $\sqrt{\mathcal{R}(z)}$, defined in \eqref{intro: R}, satisfies
\begin{align}\label{jump for R 2cuts}
& \sqrt{\mathcal{R}(z)}_{+} + \sqrt{\mathcal{R}(z)}_{-} = 0, \qquad \mbox{for } z \in \bigcup_{j=1}^{g+1}(-x_{2j},-x_{2j-1}),
\end{align}
where $\sqrt{\mathcal{R}(z)}_{+}$ and $\sqrt{\mathcal{R}(z)}_{-}$ denote the limits of  $\sqrt{\mathcal{R}(s)}$ as $s \to z$ from the upper and lower half-plane of the first sheet of $X$, respectively. We define the $\gg$-function by
\begin{align}\label{gprime}
\gg(z) = \int_{-x_{1}}^{z^{+}} \frac{q(s)}{\sqrt{\mathcal{R}(s)}}ds, \qquad q(z)=\frac{z^g}{2}+\sum_{j=0}^{g-1}q_jz^j, \qquad z \in \mathbb{C}\setminus (-\infty,-x_{1}),
\end{align}
where the path of integration lies on the first sheet of $X$ and does not cross $(-\infty,-x_{1}]^{+}$ and coefficients $q_j$, $j=0,\dots,g-1$ are given by \eqref{intro: q}.  The following lemma extracts the necessary properties of the $\gg$-function.

\begin{lemma}\label{g_lemma} 
\begin{enumerate}
\, The $\gg$-function enjoys the following properties:
\item  The $\gg$-function is analytic in $\mathbb{C}\setminus(-\infty,-x_1]$ and satisfies $\gg(z) = \overline{\gg(\overline{z})}$.
\item The $\gg$-function satisfies the jump conditions 
\begin{align}
& \gg_{+}(z) + \gg_{-}(z) = 0, & & z \in (-x_{2j},-x_{2j-1}), \qquad j=1,\dots,g+1, \label{jump1 g 2cuts} \\
& \gg_+(z) - \gg_-(z) = i(\Omega_{1}+\cdots+\Omega_j), & & z \in (-x_{2j+1},-x_{2j}), \qquad j=1,\dots,g, \label{jump3 g 2cuts}
\end{align}
where $\Omega_{j}>0$ is defined in \eqref{intro: Omegaj}.
\item As $z\to\infty$, $z \notin (-\infty,-x_{2g+1})$, we have
\begin{equation}\label{asymp g 2cuts}
\gg(z) = \sqrt{z}-\frac{2c}{\sqrt{z}}+\bigO(z^{-3/2}),
\end{equation}
where $c$ is defined in \eqref{intro: c const}.
\item $\re \gg(z)\geq0$ for $z\in\mathbb{C}$ with equality only when $z\in[-x_{2j},-x_{2j-1}]$,  $j=1,\dots,g+1$, where $x_{2g+2}:=-\infty$.
\end{enumerate}
\end{lemma}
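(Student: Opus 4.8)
The plan is to establish the four properties of $\gg$ in order, exploiting the fact that $\gg$ is an abelian integral of the second kind on $X$ whose periods and residues at infinity have been engineered precisely via the conditions in \eqref{prop of omega and q in intro} and the definition \eqref{intro: q} of $q$.

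\textbf{Part 1 (analyticity and symmetry).} Since $q$ is a polynomial and $\sqrt{\mathcal{R}(z)}$ is analytic and nonvanishing on $\mathbb{C}\setminus(-\infty,-x_1]$ (as a function on the first sheet), the integrand $q(s)/\sqrt{\mathcal{R}(s)}$ is analytic there. The potential obstruction to $\gg$ being single-valued is that closed loops in $\mathbb{C}\setminus(-\infty,-x_1]$ may encircle some of the branch cuts $(-x_{2j},-x_{2j-1})$; a loop around $(-x_{2j},-x_{2j-1})$ is homologous to $B_j$, so the ambiguity is a period $\oint_{B_j} q(s)/\sqrt{\mathcal{R}(s)}\,ds$, which is $i\Omega_j$ and generally nonzero. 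Hence one must be slightly careful: the correct statement is that $\gg$ is analytic and single-valued once we also excise... — actually the paper declares $\gg$ on $\mathbb{C}\setminus(-\infty,-x_1]$, so I would note that the integration path is constrained not to cross $(-\infty,-x_1]^+$, and that within the simply connected domain $\mathbb{C}\setminus(-\infty,-x_1]$ the integral is path-independent, so $\gg$ is genuinely analytic there. The symmetry $\gg(z)=\overline{\gg(\bar z)}$ follows because $q$ has real coefficients (by \eqref{intro: q} and $a_{i,j}\in\mathbb{R}$), $\mathcal{R}$ has real coefficients, the contour can be chosen symmetric under conjugation, and the sheet normalization $\sqrt{\mathcal{R}(z^+)}>0$ on $(-x_1,+\infty)$ is compatible with Schwarz reflection.

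\textbf{Part 2 (jump relations).} For $z\in(-x_{2j},-x_{2j-1})$ I use \eqref{jump for R 2cuts}: the $\pm$ boundary values of $\sqrt{\mathcal{R}}$ on such a cut are negatives of each other, so $\gg_+(z)+\gg_-(z)$ equals the integral of $q/\sqrt{\mathcal{R}}$ over a contour from $-x_1$ to $z$ along the top plus a contour from $z$ back to $-x_1$ along the bottom, i.e. $\oint$ over a loop. This loop is homologous to $B_1+B_2+\cdots+B_{j-1}$ plus half-loops; the cleanest route is to note that $\gg_+ + \gg_-$ is locally constant on each cut (its derivative is $q_+/\sqrt{\mathcal{R}}_+ + q_-/\sqrt{\mathcal{R}}_- = 0$) and to evaluate the constant at the left endpoint, reducing to $2\gg(-x_{2j-1})$; pushing further and using the $A$-cycle vanishing conditions $\oint_{A_j} q(s)/\sqrt{\mathcal{R}(s)}\,ds = 0$ from \eqref{prop of omega and q in intro} forces this constant to be $0$. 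For \eqref{jump3 g 2cuts}, on the ``band'' intervals $(-x_{2j+1},-x_{2j})$ the function $\sqrt{\mathcal{R}}$ has no jump, so $\gg_+ - \gg_-$ is again locally constant with derivative zero; its value is a sum of $B$-cycle periods of $q/\sqrt{\mathcal{R}}$, namely $\sum_{k=1}^{j} \oint_{B_k} q(s)/\sqrt{\mathcal{R}(s)}\,ds = i(\Omega_1+\cdots+\Omega_j)$ by \eqref{intro: Omegaj}, where the cycle bookkeeping follows the orientations in Figure \ref{fig:homology}.

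\textbf{Part 3 (asymptotics at infinity).} As $z\to\infty$ on the first sheet, $\sqrt{\mathcal{R}(z)} = z^{g+1/2}\prod(1+x_j/z)^{1/2} = z^{g+1/2}\big(1 + \tfrac{1}{2z}\sum x_j + O(z^{-2})\big)$, and $q(z) = \tfrac12 z^g + q_{g-1}z^{g-1} + O(z^{g-2})$, so $\gg'(z) = q(z)/\sqrt{\mathcal{R}(z)} = \tfrac{1}{2}z^{-1/2} + (q_{g-1} - \tfrac14\sum x_j) z^{-3/2} + O(z^{-5/2}) = \tfrac{1}{2}z^{-1/2} + c\,z^{-3/2} + O(z^{-5/2})$ with $c$ as in \eqref{intro: c const}. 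The absence of a $z^{-1}$ term (which would produce a logarithm, i.e. a nonzero residue at $\infty$) is exactly guaranteed by $q$ having degree $g$ with leading coefficient $1/2$; integrating termwise from a fixed base point gives $\gg(z) = \sqrt{z} - 2c\,z^{-1/2} + O(z^{-3/2})$, with the integration constant pinned to zero — this requires checking that the constant of integration vanishes, which follows from the normalization $\gg(-x_1)=0$ together with a consistency argument, or more cleanly by the known fact that $\gg$ is the abelian integral with this prescribed behavior; I would just state that the constant is absorbed correctly so that \eqref{asymp g 2cuts} holds.

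\textbf{Part 4 (sign of $\re\gg$), the main obstacle.} This is the hardest and most delicate part, being a global inequality rather than a local identity. The strategy is: (i) On each band $(-x_{2j},-x_{2j-1})$, parts 2 and the endpoint normalization show $\gg_\pm$ is purely imaginary, hence $\re\gg = 0$ there; conversely I must show strict positivity elsewhere. (ii) Define $h(z) = \re\gg(z)$, a harmonic function on $\mathbb{C}\setminus\big(\bigcup[-x_{2j},-x_{2j-1}]\big)$ (it is continuous across the ``gap'' intervals $(-x_{2j+1},-x_{2j})$ because $\gg_+ - \gg_-$ is purely imaginary there by \eqref{jump3 g 2cuts}, and across $(-\infty,-x_{2g+1})$ for the same reason), so $h$ is harmonic on $\mathbb{C}\setminus\bigcup_{j=1}^{g+1}[-x_{2j},-x_{2j-1}]$. (iii) On the boundary $h=0$, and as $z\to\infty$, $h(z)\sim\re\sqrt{z}$, which is $\geq 0$ but not bounded; to apply a maximum/minimum principle I instead argue that $h$ has no interior local minimum below $0$: the critical points of $\gg$ are the zeros of $q$, and by the claim proved in Section \ref{subsection: g-function} $q$ has exactly one zero $t_j$ in each gap $(-x_{2j+1},-x_{2j})$ and no others, so on the complement of the bands the only critical points of $h$ lie on the real axis in the gaps. (iv) Analyze the local structure at each $t_j$: there $\gg'(t_j)=0$ and $\gg''(t_j)\neq 0$, so $t_j$ is a simple saddle of $h$; combined with $h=0$ on the bands and $h>0$ on $(-x_1,+\infty)$ near $-x_1$ (immediate from $\gg(-x_1)=0$, $\gg'>0$ just to the right), a careful level-curve/sign analysis — of the type standard in Riemann--Hilbert steepest descent — shows $\re\gg > 0$ off the bands. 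I expect to fill this in by the usual argument: the zero level set of $h$ consists of the bands together with finitely many analytic arcs emanating from the saddles $t_j$ at angles determined by $\arg\gg''(t_j)$; one checks (using the reality/symmetry from part 1) that these arcs stay on the real axis, so $h$ does not vanish anywhere else, and then the sign is fixed by a single evaluation (e.g. $\gg(z)\to+\infty$ along the positive real axis). The crux, and where I would spend the most care, is ruling out spurious components of $\{h=0\}$ and handling the behavior near $-x_{2g+1}$ where the band meets the half-line $(-\infty,-x_{2g+1})$; the real-symmetry of $\gg$ and the location-of-zeros-of-$q$ claim are the two facts that make this tractable.
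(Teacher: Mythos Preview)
Your Parts 1 and 2 match the paper's argument closely. Two points deserve comment.

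\textbf{Part 3 (integration constant).} You correctly expand $\gg'(z)$ and integrate to $\gg(z)=\sqrt{z}+g_0-2c\,z^{-1/2}+O(z^{-3/2})$, but then wave your hands at why $g_0=0$; the normalization $\gg(-x_1)=0$ does not settle this directly, since it sits at the \emph{other} end of the integration. The paper's device is cleaner: take the jump relation $\gg_+(z)+\gg_-(z)=0$ on the unbounded cut $(-\infty,-x_{2g+1})$ and let $z\to-\infty$; since $\sqrt{z}_+ + \sqrt{z}_-=0$ there, the expansion forces $2g_0=0$.

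\textbf{Part 4 (sign of $\re\gg$).} Here your approach genuinely diverges from the paper's. You propose a saddle-point/level-curve analysis: locate the critical points of $\re\gg$ at the zeros of $q$ in the gaps, analyze the local topology of $\{\re\gg=0\}$ near each saddle, and argue by symmetry that no extraneous zero-level arcs escape into the complex plane. This can be made to work, but it is delicate (you yourself flag the endpoint $-x_{2g+1}$ and the global ruling-out of spurious components as the crux), and your outline stops short of a proof. The paper instead avoids all local saddle analysis: it uses the \emph{full} asymptotic series $\gg(z)\sim\sqrt{z}(1+\sum_{j\ge1}\gg_j z^{-j})$ with $\gg_j\in\mathbb{R}$ (from Schwarz reflection), together with the elementary bound $\big|\cos((j-\tfrac12)\theta)/\cos\tfrac{\theta}{2}\big|\le 2j-1$, to obtain $\tfrac12\sqrt{|z|}\cos\tfrac{\theta}{2}\le \re\gg(z)\le \tfrac32\sqrt{|z|}\cos\tfrac{\theta}{2}$ uniformly for $|z|\ge\rho_*$ and all $|\theta|\le\pi$. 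This gives $\re\gg>0$ on an entire large semicircle in the upper half-plane; combined with $\re\gg\ge0$ on the real axis (which follows from the zeros-of-$q$ claim exactly as you say), the minimum principle on the half-disk $\mathcal{U}_s$, $s\ge\rho_*$, yields $\re\gg>0$ in the whole upper half-plane, and Schwarz reflection handles the lower half-plane. The paper's route is shorter and sidesteps the topological bookkeeping that your approach would require.
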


\begin{proof}

We begin with the observation that the second set of equations in \eqref{prop of omega and q in intro} can be rewritten as
\begin{align}\label{qInt conditions}
\int_{-x_{2j+1}}^{-x_{2j}}\frac{q(s)}{\sqrt{\mathcal{R}(s)}}ds=0, \qquad j=1,\dots,g,
\end{align}
where the path lies on the first sheet of $X$. Because $\sqrt{\mathcal{R}(z)}$ is real and non-zero on $(-x_{2j+1},-x_{2j})^{+}$, it follows from \eqref{qInt conditions} that $q$ has exactly one simple zero on each of the intervals $(-x_{2g+1},-x_{2g}), \ldots$, $(-x_{5},-x_{4})$, $(-x_{3},-x_{2})$. Since $q(z)\to+\infty$ as $z\to+\infty$, it satisfies
\begin{align}\label{sgn q}
\text{sgn} \; q(z)&=(-1)^{j+1}, \qquad \mbox{ for all }z\in(-x_{2j},-x_{2j-1}), \quad j=1,\dots,g+1.
\end{align}

\begin{enumerate}
\item The fact that $\gg$ is analytic in $\mathbb{C}\setminus(-\infty,-x_1]$ follows directly from \eqref{gprime}, and $\overline{\gg(\overline{z})}= \gg(z)$ follows from the fact that the coefficients of $\mathcal{R}$ and $q$ are real.

\item The jumps \eqref{jump1 g 2cuts} follow from \eqref{jump for R 2cuts} and \eqref{qInt conditions}. 
For $z \in (-x_{2j+1},-x_{2j})$, by \eqref{gprime} we have
\begin{align*}
\gg_{+}(z) - \gg_{-}(z) = \sum_{k=1}^{j}\int_{B_{k}} \gg'(s)ds = i (\Omega_{1}+\cdots+\Omega_{j}),
\end{align*}
and this is \eqref{jump3 g 2cuts}.  Using \eqref{sgn q} and the definition of $\Omega_{j}$ in \eqref{intro: Omegaj}, it follows that $\Omega_j > 0$.

\item A direct computation using \eqref{gprime} gives $\gg'(z) = \frac{1}{2}z^{-\frac{1}{2}}+c \, z^{-\frac{3}{2}}+\bigO(z^{-\frac{5}{2}})$ as $z \to \infty$. Hence,
\begin{align}\label{lol1}
\gg(z) = \sqrt{z}+g_0-\frac{2c}{\sqrt{z}}+\bigO(z^{-\frac{3}{2}}) \qquad \mbox{as } z \to \infty,
\end{align}
for a certain $g_{0} \in \mathbb{C}$. We know from \eqref{jump1 g 2cuts} that $\gg_{+}(z) + \gg_{-}(z) = 0$, which implies $g_{0} = 0$.

\item It follows from the zeros of $q(z)$ and \eqref{sgn q} that $\re \gg(z) >0$ for $z \in (-x_{1},+\infty)$,
\begin{align}\label{real part of g on R inside proof}
\re \gg(z) =0 \quad \mbox{for } z\in \bigcup_{j=1}^{g+1}[-x_{2j},-x_{2j-1}] \quad \mbox{ and } \quad \re \gg(z) >0 \quad \mbox{for } z \in \bigcup_{j=1}^{g}(-x_{2j+1},-x_{2j}),
\end{align}
It follows from the definition \eqref{gprime} that $\gg$ admits an expansion of the form $\gg(z)\sim \sqrt{z}(1+\sum_{j=1}^\infty\frac{\gg_j}{z^j})$ as $z \to \infty$, and since $\overline{\gg(\overline{z})}= \gg(z)$, we infer that $\gg_j \in \mathbb{R}$ for all $j\geq 1$. Hence, as $z = |z|e^{i\theta} \to \infty$, $|\theta| \leq \pi$, we have
\begin{align*}
\re \gg(z) \sim \sqrt{|z|}\cos \frac{\theta}{2} \bigg( 1 + \sum_{j=1}^{\infty} \frac{\gg_{j}}{|z|^{j}} \frac{\cos \big( (j-\frac{1}{2})\theta \big)}{\cos \frac{\theta}{2}} \bigg) \qquad \mbox{as } z \to \infty.
\end{align*}
Since 
\begin{align*}
\left| \frac{\cos \big( (j-\frac{1}{2})\theta \big)}{\cos \frac{\theta}{2}} \right| \leq 2j-1, \qquad \mbox{for all } |\theta| \leq \pi  \mbox{ and } j=1,2,\ldots
\end{align*}
there exist a constant $\rho_*>0$ such that 
\begin{align*}
\left| \frac{\re \gg(|z|e^{i\theta})}{ \sqrt{|z|}\cos \frac{\theta}{2}}- 1\right| \leq \frac{1}{2}, \qquad \mbox{for all } |z|\geq \rho_{*} \mbox{ and } |\theta|\leq \pi.
\end{align*}
This implies that 
\begin{align}\label{lol10}
\frac{1}{2}\sqrt{|z|}\cos \frac{\theta}{2} \leq \re \gg(|z|e^{i\theta}) \leq \frac{3}{2}\sqrt{|z|}\cos \frac{\theta}{2} \qquad \mbox{for all } |z|\geq \rho_{*} \mbox{ and } |\theta|\leq \pi.
\end{align}
Let $\mathcal{U}_{s}$ be the open half disk in the upper half-plane centered at the origin with radius $s \geq \rho_{*}$. Since $\re \gg(z)$ is harmonic on $\mathcal{U}_{s}$, its minimum value over the set $\overline{\mathcal{U}_{s}}$ is attained only on $\partial\mathcal{U}_{s}$, which by \eqref{real part of g on R inside proof} and \eqref{lol10} is equal to $0$. Thus $\re \gg(z)>0$ for all $z\in\mathcal{U}_{s}$ Since $s \geq \rho_{*}$ was arbitrary, we conclude that $\re \gg(z)>0$ for all $z$ in the upper half-plane. The symmetry $\overline{\gg(\overline{z})}= \gg(z)$ now implies that $\re \gg(z)>0$ for all $z$ in the lower half-plane, which finishes the proof.
\end{enumerate}
\end{proof}

\subsection{Rescaling of the RH problem}\label{Subsection: Phi to T}

We now use the $\gg$-function \eqref{gprime} to define
\begin{align}\label{def of T}
    T(z):=\begin{pmatrix} 1 & 0 \\ -2ic\sqrt{r} & 1 \end{pmatrix}r^\frac{\sigma_3}{4}\Phi(z)e^{-\sqrt{r}\gg(z)\sigma_3}.
\end{align}
It can now be verified, using Lemma \ref{g_lemma} and the RH problem for $\Phi$, that $T$ is the unique solution to the following RH problem.

\subsubsection*{RH problem for $T(\cdot)=T(\cdot;r, \vec{x},\alpha)$}
\begin{enumerate}[label={(\alph*)}]
\item[(a)] $T : \C \backslash \Sigma_{\Phi} \rightarrow \C^{2\times 2}$ is analytic, and we recall that $\Sigma_{\Phi}$ is shown in Figure \ref{fig:modelRHcontours}.
\item[(b)] The jumps for $T$ are given by
\begin{align*}
& T_{+}(z) = T_{-}(z)\begin{pmatrix}
0 & 1 \\ -1 & 0
\end{pmatrix}, & & z \in (-\infty,-x_{2g+1}), \\
& T_{+}(z) = T_{-}(z)\begin{pmatrix}
1 & 0 \\ e^{\pm i\pi\alpha-2\sqrt{r}\gg(z)} & 1
\end{pmatrix}, & & z \in \Sigma_\pm, \\
& T_{+}(z) = T_{-}(z)e^{(i\pi\alpha-i\widehat{\Omega}_{j}\sqrt{r})\sigma_3}, & & z \in (-x_{2j+1},-x_{2j}), \quad j=0,\dots,g,
\end{align*}
where we define 
\begin{align*}
\widehat{\Omega}_{0}=0 \qquad \mbox{ and } \qquad \widehat{\Omega}_{j}=\Omega_1+\cdots+\Omega_j, \quad j=1,\ldots,g.
\end{align*}
For $z\in \bigcup_{j=1}^{g}(-x_{2j},-x_{2j-1})$ we have
\begin{align}\label{3factorization}
& T_{+}(z) = T_{-}(z)\begin{pmatrix} 1 & 0 \\ e^{-i\pi\alpha-2\sqrt{r}\gg_-(z)} & 1 \end{pmatrix}\begin{pmatrix} 0 & 1 \\ -1 & 0 \end{pmatrix}\begin{pmatrix} 1 & 0 \\ e^{i\pi\alpha-2\sqrt{r}\gg_+(z)} & 1 \end{pmatrix}.
\end{align}
\item[(c)] As $z \rightarrow \infty$, we have
\begin{equation}
\label{eq:Tasympinf}
T(z) = \left( I + \frac{T_1}{z}+ \Or\left(z^{-2}\right) \right) z^{-\frac{\sigma_3}{4}}M,
\end{equation}
where $T_{1}$ is independent of $z$ and satisfies $T_{1,12}=\frac{\Phi_{1,12}}{\sqrt{r}}-2ic\sqrt{r}$.
\item[(d)] $T(z) = \Or( \log(z+x_j) )$ as $z \to -x_{j}$, $j=1,\dots,2g+1$.
\item[(e)] As $z\to0$, $T$ takes the form
\begin{equation}\label{T0}
    T(z)=T_0(z)z^{\frac{\alpha}{2}\sigma_3},
\end{equation}
where $T_0$ is analytic in a neighborhood of $0$.
\end{enumerate}

\subsection{Opening of the lenses}\label{Subsection: T to S}
For each $j \in \{1,\ldots,g\}$, let $\Sigma_{j,+}$ and $\Sigma_{j,-}$ be two open curves starting at $-x_{2j}$, ending at $-x_{2j-1}$, and lying in the upper and lower half-plane respectively, see also Figure \ref{fig:contour for S}. The bounded lens-shaped region delimited by $\Sigma_{j,+}\cup \Sigma_{j,-}$ will be denoted by $\mathcal{L}_{j}$, $j=1,\ldots,g$. With the factorization \eqref{3factorization} in mind, we define $S$ in terms of $T$ as follows
\begin{align}\label{def:S}
\hspace{-0.2cm}S(z):=T(z) \left\{ \hspace{-0.1cm} \begin{array}{l l}
\begin{pmatrix}
1 & 0 \\
-e^{\pi i\alpha-2\sqrt{r}\gg(z)} & 1
\end{pmatrix}, & z \in \cup_{j=1}^{g}\mathcal{L}_{j} \mbox{ and } \im z > 0, \\
\begin{pmatrix}
1 & 0 \\
e^{-\pi i\alpha-2\sqrt{r}\gg(z)} & 1
\end{pmatrix}, & z \in \cup_{j=1}^{g}\mathcal{L}_{j} \mbox{ and } \im z< 0, \\
I, & \mbox{otherwise}.
\end{array} \right.
\end{align}
It is straightforward to verify that $S$ satisfies the following RH problem.

\subsubsection*{RH problem for $S(\cdot)=S(\cdot;r, \vec{x},\alpha)$}
\begin{enumerate}[label={(\alph*)}]
\item[(a)] $S : \C \setminus \Sigma_{S} \rightarrow \C^{2\times 2}$ is analytic, with
\begin{equation}\label{def:SigmaS}
\Sigma_{S}=(-\infty,0]\cup\gamma_{+}\cup\gamma_{-}, \qquad \gamma_{\pm} = \Sigma_\pm\cup\Sigma_{1,\pm}\cup\cdots\cup\Sigma_{g,\pm},
\end{equation}
and $\Sigma_{S}$ is oriented as in Figure \ref{fig:contour for S}.
\item[(b)] The boundary values of $S$ are related by
\begin{align}
& S_{+}(z) = S_{-}(z)\begin{pmatrix}
0 & 1 \\ -1 & 0
\end{pmatrix}, & & z \in (-x_{2j},-x_{2j-1}), ~ j=1,\dots,g+1, \label{SInf0 jump} \\
& S_{+}(z) = S_{-}(z)\begin{pmatrix}
1 & 0 \\ e^{\pm \pi i\alpha-2\sqrt{r}\gg(z)} & 1
\end{pmatrix}, & & z \in \gamma_{\pm}, \\
& S_{+}(z) = S_{-}(z)e^{(\pi i\alpha-i\widehat{\Omega}_{j}\sqrt{r})\sigma_{3}}, & & z \in (-x_{2j+1},-x_{2j}), ~ j=0,\dots,g, \label{S0y2 jump}
\end{align}
where we recall that $x_{2g+2}:=-\infty$.
\item[(c)] As $z \rightarrow \infty$, we have
\begin{equation}
\label{eq:Sasympinf}
S(z) = \left( I +\frac{T_1}{z}+\Or\left(z^{-2}\right) \right) z^{-\frac{\sigma_3}{4}}M.
\end{equation}
\item[(d)] As $z \to -x_j$, $j=1,\dots,2g+1$, we have
\begin{align}\label{S asymp at xj}
S(z) = \Or( \log(z+x_j) ).
\end{align}
\item[(e)] As $z\to0$, $S$ takes the form
\begin{equation}\label{S0}
    S(z)=S_0(z)z^{\frac{\alpha}{2}\sigma_3},
\end{equation}
where $S_0$ is analytic in a neighborhood of $0$.
\end{enumerate}
Recall from Lemma \ref{g_lemma} that 
\begin{align*}
\re \gg(z)>0 \mbox{ for all }z\in \mathbb{C}\setminus \bigcup_{j=1}^{g+1}[-x_{2j},-x_{2j-1}] \quad \mbox{ and } \quad \re \gg(z)=0 \mbox{ for all } z\in \bigcup_{j=1}^{g+1}[-x_{2j},-x_{2j-1}].
\end{align*}
Hence, for any $\epsilon > 0$, there exists $c_{1}>0$ such that 
\begin{align*}
& S_{-}(z)^{-1}S_{+}(z)-I = \bigO(e^{-c_{1}r}) \mbox{ as } r \to + \infty \mbox{ uniformly for } z\in (\gamma_{+}\cup\gamma_{-}) \cap \Big\{z:\min_{j=1,\ldots,2g+1}\{|z+x_{j}|\}>\epsilon\Big\}.
\end{align*}
However, $S_{-}(z)^{-1}S_{+}(z)-I$ is not exponentially small as $r  \to + \infty$ when simultaneously $z \in \gamma_{+}\cup \gamma_{-}$, $z \to -x_{j}$ for $j = 1,\ldots,2g+1$.
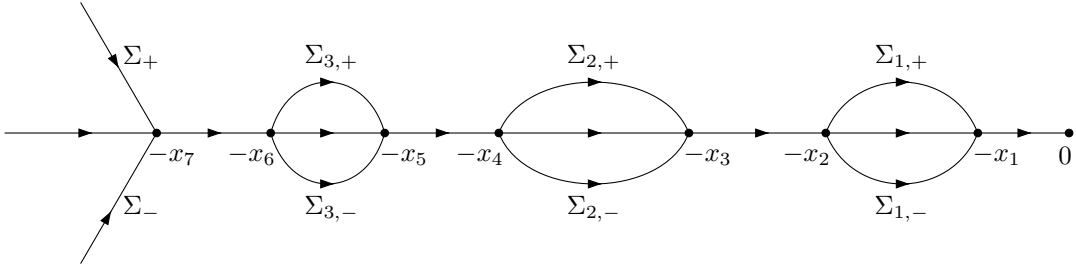
\begin{figure}
\centering
\begin{tikzpicture}

\draw[fill] (0,0) circle (0.05);
\draw[fill] (1.5,0) circle (0.05);
\draw[fill] (3,0) circle (0.05);
\draw[fill] (4.5,0) circle (0.05);
\draw[fill] (7,0) circle (0.05);
\draw[fill] (8.8,0) circle (0.05);
\draw[fill] (10.8,0) circle (0.05);
\draw[fill] (12,0) circle (0.05);
\draw (-2,0) -- (12,0);
\draw (0,0) -- (120:2);
\draw (0,0) -- (-120:2);

\draw (1.5,0) .. controls (1.75,0.9) and (2.75,0.9) .. (3,0);
\draw (1.5,0) .. controls (1.75,-0.9) and (2.75,-0.9) .. (3,0);

\draw (4.5,0) .. controls (4.85,0.9) and (6.65,0.9) .. (7,0);
\draw (4.5,0) .. controls (4.85,-0.9) and (6.65,-0.9) .. (7,0);

\draw (8.8,0) .. controls (9.15,0.9) and (10.45,0.9) .. (10.8,0);
\draw (8.8,0) .. controls (9.15,-0.9) and (10.45,-0.9) .. (10.8,0);

\node at (0.2,-0.3) {$-x_7$};
\node at (1.25,-0.3) {$-x_6$};
\node at (3.25,-0.3) {$-x_5$};
\node at (4.25,-0.3) {$-x_4$};
\node at (7.25,-0.3) {$-x_3$};
\node at (8.55,-0.3) {$-x_2$};
\node at (11.05,-0.3) {$-x_1$};
\node at (11.95,-0.3) {$0$};

\node at (-0.2,1) {$\Sigma_+$};
\node at (-0.2,-1) {$\Sigma_-$};
\node at (2.3,1) {$\Sigma_{3,+}$};
\node at (2.3,-1) {$\Sigma_{3,-}$};
\node at (5.75,1) {$\Sigma_{2,+}$};
\node at (5.75,-1) {$\Sigma_{2,-}$};
\node at (9.8,1) {$\Sigma_{1,+}$};
\node at (9.8,-1) {$\Sigma_{1,-}$};

\draw[black,arrows={-Triangle[length=0.18cm,width=0.12cm]}]
(-120:1.2) --  ++(60:0.001);
\draw[black,arrows={-Triangle[length=0.18cm,width=0.12cm]}]
(120:1.0) --  ++(-60:0.001);

\draw[black,arrows={-Triangle[length=0.18cm,width=0.12cm]}]
(0:-0.85) --  ++(0:0.001);
\draw[black,arrows={-Triangle[length=0.18cm,width=0.12cm]}]
(0:0.85) --  ++(0:0.001);
\draw[black,arrows={-Triangle[length=0.18cm,width=0.12cm]}]
(0:3.85) --  ++(0:0.001);
\draw[black,arrows={-Triangle[length=0.18cm,width=0.12cm]}]
(0:8.05) --  ++(0:0.001);
\draw[black,arrows={-Triangle[length=0.18cm,width=0.12cm]}]
(0:11.5) --  ++(0:0.001);

\draw[black,arrows={-Triangle[length=0.18cm,width=0.12cm]}]
(0:2.33) --  ++(0:0.001);
\draw[black,arrows={-Triangle[length=0.18cm,width=0.12cm]}]
(2.33,0.675) --  ++(0:0.001);
\draw[black,arrows={-Triangle[length=0.18cm,width=0.12cm]}]
(2.33,-0.675) --  ++(0:0.001);

\draw[black,arrows={-Triangle[length=0.18cm,width=0.12cm]}]
(0:5.85) --  ++(0:0.001);
\draw[black,arrows={-Triangle[length=0.18cm,width=0.12cm]}]
(5.85,0.675) --  ++(0:0.001);
\draw[black,arrows={-Triangle[length=0.18cm,width=0.12cm]}]
(5.85,-0.675) --  ++(0:0.001);

\draw[black,arrows={-Triangle[length=0.18cm,width=0.12cm]}]
(0:9.9) --  ++(0:0.001);
\draw[black,arrows={-Triangle[length=0.18cm,width=0.12cm]}]
(9.9,0.675) --  ++(0:0.001);
\draw[black,arrows={-Triangle[length=0.18cm,width=0.12cm]}]
(9.9,-0.675) --  ++(0:0.001);

\end{tikzpicture}
\caption{Jump contours $\Sigma_{S}$ for $S$ with $g=3$.}
\label{fig:contour for S}
\end{figure}

\section{Global parametrix}\label{section: global parametrix}
In this section we construct the global parametrix $P^{(\infty)}(z)$, which will serve as an approximation to $S(z)$ when $z$ is bounded away from the endpoints $-x_j$, $j=1,\dots,2g+1$. 
\subsubsection*{RH problem for $P^{(\infty)}$}
\begin{enumerate}[label={(\alph*)}]
\item[(a)] $P^{(\infty)} : \C \backslash (-\infty,0] \rightarrow \C^{2\times 2}$ is analytic.
\item[(b)] The jumps for $P^{(\infty)}$ are given by
\begin{align}
& P^{(\infty)}_{+}(z) = P^{(\infty)}_{-}(z)\begin{pmatrix}
0 & 1 \\ -1 & 0
\end{pmatrix}, & & z \in (-x_{2j},-x_{2j-1}), ~ j=1,\dots,g+1, \nonumber \\
& P^{(\infty)}_{+}(z) = P^{(\infty)}_{-}(z)e^{(\pi i\alpha-i\widehat{\Omega}_{j}\sqrt{r})\sigma_{3}}, & & z \in (-x_{2j+1},-x_{2j}), ~ j=0,\dots,g, \label{jumps for Pinf on (-x3,-x2)}
\end{align}
\item[(c)] As $z \rightarrow \infty$, we have
\begin{equation}
\label{eq:Pinf asympinf}
P^{(\infty)}(z) = \left( I + \frac{P^{(\infty)}_1}{z} + \Or\left(z^{-2}\right) \right) z^{-\frac{\sigma_3}{4}}M,
\end{equation}
where $P_{1}^{(\infty)}$ is independent of $z$.
\item[(d)] As $z \to -x_j$, $j=1,\dots,2g+1$, we have
\begin{equation}
    P^{(\infty)}(z) = \bigO((z+x_{j})^{-\frac{1}{4}}).
\end{equation}
\item[(e)] As $z\to0$, $P^{(\infty)}(z)$ takes the form
\begin{equation}\label{Pinf0}
    P^{(\infty)}(z)=P^{(\infty)}_0(z)z^{\frac{\alpha}{2}\sigma_3},
\end{equation}
where $P^{(\infty)}_0$ is analytic in a neighborhood of $0$.
\end{enumerate}
We will construct $P^{(\infty)}$ in terms of Riemann $\theta$-functions using some ideas from \cite{BCL2020, BCL2020 2, DIZ, KuijVanlessen}. However, because $P^{(\infty)}$ is not analytic on an unbounded cut, our construction is technically different from \cite{DIZ,KuijVanlessen}. As in \cite{KuijVanlessen}, we proceed in two steps. First, we construct the solution $\tilde{P}^{(\infty)}$ of an auxiliary RH problem which has no root-type singularity at the origin. Next, we consider a function $D$ with a root-type behavior at $0$ and suitable jumps. We will see that $P^{(\infty)}$ is expressed in terms of $\tilde{P}^{(\infty)}$ and $D$.

\subsection[]{Construction of $P^{(\infty)}$}
For the moment we let $\vec\nu = (\nu_{1},\ldots,\nu_{g})\in\mathbb{R}^g$ be arbitrary (we will require $\vec\nu$ to be as in \eqref{intro: nu} later). Consider the following RH problem:
\subsubsection*{RH problem for $\tilde{P}^{(\infty)}(\cdot)=\tilde{P}^{(\infty)}(\cdot;\vec{\nu})$}
\begin{enumerate}[label={(\alph*)}]
\item[(a)] $\tilde{P}^{(\infty)} : \C \backslash (-\infty,-x_1] \rightarrow \C^{2\times 2}$ is analytic.
\item[(b)] The jumps for $\tilde{P}^{(\infty)}$ are given by
\begin{align}
& \tilde{P}^{(\infty)}_{+}(z)=\tilde{P}^{(\infty)}_{-}(z)\begin{pmatrix}
0 & 1 \\ -1 & 0
\end{pmatrix}, & & z \in (-x_{2j},-x_{2j-1}), ~ j=1,\dots,g+1, \nonumber \\
& \tilde{P}^{(\infty)}_{+}(z) = \tilde{P}^{(\infty)}_{-}(z)e^{2\pi i(\nu_1+\cdots+\nu_j)\sigma_{3}}, & & z \in (-x_{2j+1},-x_{2j}), ~ j=1,\dots,g. \label{jumps for tildePinf on (-x3,-x2)}
\end{align}
\item[(c)] As $z \rightarrow \infty$, we have
\begin{equation}
\label{eq:tildePinf asympinf}
\tilde{P}^{(\infty)}(z) = \left( I + \frac{\tilde{P}^{(\infty)}_1}{z} + \Or\left(z^{-2}\right) \right) z^{-\frac{\sigma_3}{4}}M,
\end{equation}
where $\tilde{P}^{(\infty)}_1$ is independent of $z$.
\item[(d)] As $z \to -x_j$, $j=1,\dots,2g+1$, we have \begin{equation}
    \tilde{P}^{(\infty)}(z) = \bigO((z+x_j)^{-\frac{1}{4}}).
\end{equation}
\end{enumerate}

We express $\tilde{P}^{(\infty)}$ in terms of Riemann $\theta$-functions, whose definition and relevant properties are given in \eqref{intro: theta def} and \eqref{intro: ThetaPeriodic}. 
Recall from \eqref{def of Abel map} that the Abel map $\vec{\varphi}$ is given by
\begin{align}\label{AbelMap}
\vec{\varphi}: X \to J(X), \qquad P \mapsto \vec{\varphi}(P)=\int_{-x_1}^P \vec{\omega}^{t},
\end{align}
and that $\vec{\omega}$ is defined in \eqref{intro: omegaVec}. It will be convenient to also consider the following function, which we denote by $\vec{\varphi}^{\hspace{0.4mm} \mathbb{C}}$:
\begin{align}\label{varphi on the plane}
\vec{\varphi}^{\hspace{0.4mm} \mathbb{C}}: \mathbb{C} \setminus (-\infty,-x_{1}] \to \mathbb{C}^{g}, \qquad z \mapsto \vec{\varphi}^{\hspace{0.4mm} \mathbb{C}}(z)=\int_{-x_1}^{z^{+}} \vec{\omega}^{t},
\end{align}
where the path of integration lies on the first sheet. Recall that the period matrix $\tau$ is defined  by \eqref{intro: tau}. It is well-known, see e.g. \cite[p. 63]{FK1992}, that $\tau$ is symmetric and has positive definite imaginary part. The following properties of $\vec{\varphi}^{\hspace{0.4mm} \mathbb{C}}$ follow readily from its definition.

\begin{proposition}\label{AbelMapProperties}
For $z\in \mathbb{C}\setminus (-\infty,-x_{1}]$, we have
\begin{align}
\vec{\varphi}_+^{\hspace{0.4mm} \mathbb{C}}(z)+\vec{\varphi}_-^{\hspace{0.4mm} \mathbb{C}}(z)&=\vec{e}_1-\vec{e}_{j}, & & z\in(-x_{2j},-x_{2j-1}), ~ j=1,\dots,g+1, \label{AbelMap_JumpCut} \\
\vec{\varphi}_+^{\hspace{0.4mm} \mathbb{C}}(z)-\vec{\varphi}_-^{\hspace{0.4mm} \mathbb{C}}(z)&=\sum_{k=1}^j\vec{\tau}_k, & & z\in(-x_{2j+1},-x_{2j}), ~ j=1,\dots,g, \label{AbelMap_JumpGap}
    \end{align}
    where $\vec{e}_j$ denotes the $j$th column of the $g\times g$ identity matrix, $\vec{e}_{g+1}:=\vec{0}$, and $\vec{\tau}_j$ is the $j$th column of the matrix $\tau$.  Moreover, we have $\vec{\varphi}^{\hspace{0.4mm} \mathbb{C}}(-x_1)=\vec{0}$, $\vec{\varphi}^{\hspace{0.4mm} \mathbb{C}}(\infty)=\frac{1}{2}\vec{e}_1$, and, for $j=1,\dots,g$,
\begin{align}\label{AbelMapEndPts}
\vec{\varphi}_\pm^{\hspace{0.4mm} \mathbb{C}}(-x_{2j})&=\frac{1}{2}(\vec{e}_1-\vec{e}_j)\pm\frac{1}{2}\sum_{k=1}^j\vec{\tau}_k, ~~~ \vec{\varphi}_\pm^{\hspace{0.4mm} \mathbb{C}}(-x_{2j+1})=\frac{1}{2}(\vec{e}_1-\vec{e}_{j+1})\pm\frac{1}{2}\sum_{k=1}^j\vec{\tau}_k.
\end{align}
\end{proposition}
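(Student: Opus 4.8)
I would prove the four assertions in the order: jump relations first, then the cut/gap constants, then the endpoint formulas and the two special values, using only the definition of $\vec{\varphi}^{\hspace{0.4mm}\mathbb{C}}$, the normalizations $\oint_{A_k}\omega_j=\delta_{jk}$ and $\oint_{B_i}\omega_j=\tau_{ij}$, and the sign behaviour of $\sqrt{\mathcal{R}}$. Since $\mathbb{A}$ is real and $\mathcal{R}$ has real coefficients, the one-forms $\vec{\omega}$ inherit from $1/\sqrt{\mathcal{R}(z)}$ the following: across each cut $(-x_{2j},-x_{2j-1})$ and across $(-\infty,-x_{2g+1})$ one has $\sqrt{\mathcal{R}}_{+}=-\sqrt{\mathcal{R}}_{-}$, hence $\vec{\omega}_{+}=-\vec{\omega}_{-}$, while across each gap $(-x_{2j+1},-x_{2j})$ the function $\sqrt{\mathcal{R}}$ is analytic, so $\vec{\omega}_{+}=\vec{\omega}_{-}$. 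Integrating along the ray, $\vec{\varphi}^{\hspace{0.4mm}\mathbb{C}}_{+}+\vec{\varphi}^{\hspace{0.4mm}\mathbb{C}}_{-}$ is constant on each cut and $\vec{\varphi}^{\hspace{0.4mm}\mathbb{C}}_{+}-\vec{\varphi}^{\hspace{0.4mm}\mathbb{C}}_{-}$ is constant on each gap. On the gap $(-x_{2j+1},-x_{2j})$, the constant $\vec{\varphi}^{\hspace{0.4mm}\mathbb{C}}_{+}-\vec{\varphi}^{\hspace{0.4mm}\mathbb{C}}_{-}$ equals $\oint_{\gamma}\vec{\omega}^{t}$ for the loop $\gamma$ running from $-x_{1}$ to a point of the gap through the upper half-plane and back through the lower half-plane; because the gaps are transparent for $\vec{\omega}$, $\gamma$ is homologous to $B_{1}+\cdots+B_{j}$, so the constant is $\sum_{k=1}^{j}\oint_{B_{k}}\vec{\omega}^{t}=\sum_{k=1}^{j}\vec{\tau}_{k}$, where the symmetry of $\tau$ is used to identify $\oint_{B_{k}}\vec{\omega}^{t}$ with the $k$th column of $\tau$. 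This gives \eqref{AbelMap_JumpGap}.

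\textbf{The cut constants.} Write $\vec{c}_{j}$ for the value of $\vec{\varphi}^{\hspace{0.4mm}\mathbb{C}}_{+}+\vec{\varphi}^{\hspace{0.4mm}\mathbb{C}}_{-}$ on the $j$th cut ($j=1,\dots,g+1$, the $(g+1)$st cut being $(-\infty,-x_{2g+1})$), and set $\vec{d}_{j}=\sum_{k=1}^{j}\vec{\tau}_{k}$. Since $1/\sqrt{\mathcal{R}}$ has an integrable singularity at each branch point, $\vec{\varphi}^{\hspace{0.4mm}\mathbb{C}}_{\pm}$ extend continuously there, and evaluating at the two endpoints of the gap $(-x_{2j+1},-x_{2j})$ gives, using the gap jump just found,
\begin{align*}
\vec{\varphi}^{\hspace{0.4mm}\mathbb{C}}_{\pm}(-x_{2j})=\tfrac{1}{2}(\vec{c}_{j}\pm\vec{d}_{j}),\qquad \vec{\varphi}^{\hspace{0.4mm}\mathbb{C}}_{\pm}(-x_{2j+1})=\tfrac{1}{2}(\vec{c}_{j+1}\pm\vec{d}_{j}),
\end{align*}
so that $\vec{c}_{j+1}-\vec{c}_{j}=2\int_{-x_{2j}}^{-x_{2j+1}}\vec{\omega}_{+}^{t}$. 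The remaining input is the $A$-period normalization: tracking the cycle $A_{l}$ of Figure \ref{fig:homology} through the two sheets, all contributions to $\oint_{A_{l}}\vec{\omega}^{t}$ lying over cuts (the cut $A_{l}$ crosses, the intermediate cuts, and the infinite cut) cancel in opposite-orientation pairs by the sheet-gluing rule, leaving $\oint_{A_{l}}\vec{\omega}^{t}=2\sum_{m=l}^{g}\int_{-x_{2m}}^{-x_{2m+1}}\vec{\omega}_{+}^{t}$; since $\oint_{A_{l}}\omega_{k}=\delta_{lk}$, this equals $\vec{e}_{l}$, so telescoping yields $\int_{-x_{2j}}^{-x_{2j+1}}\vec{\omega}_{+}^{t}=\tfrac{1}{2}(\vec{e}_{j}-\vec{e}_{j+1})$ and hence $\vec{c}_{j+1}=\vec{c}_{j}+\vec{e}_{j}-\vec{e}_{j+1}$. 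As $\vec{\varphi}^{\hspace{0.4mm}\mathbb{C}}(-x_{1})=\vec{0}$ gives $\vec{c}_{1}=\vec{0}=\vec{e}_{1}-\vec{e}_{1}$, induction gives $\vec{c}_{j}=\vec{e}_{1}-\vec{e}_{j}$ for all $j$, i.e. \eqref{AbelMap_JumpCut}. (As a consistency check one can note that reality of $\vec{\omega}$ together with $\vec{\varphi}^{\hspace{0.4mm}\mathbb{C}}_{+}(z)+\vec{\varphi}^{\hspace{0.4mm}\mathbb{C}}_{-}(z)\equiv\vec{0}$ modulo the period lattice, coming from the sheet-swap $z^{+}\mapsto z^{-}$ which sends $\vec{\omega}^{t}$ to $-\vec{\omega}^{t}$, already forces each $\vec{c}_{j}\in\mathbb{Z}^{g}$.)

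\textbf{Special values and endpoints.} The identity $\vec{\varphi}^{\hspace{0.4mm}\mathbb{C}}(-x_{1})=\vec{0}$ is immediate from the definition. Because $\mathcal{R}$ has odd degree, $\infty$ is a branch point of $X$ and $\vec{\omega}$ is holomorphic at $\infty$ in the local coordinate $z^{-1/2}$; moreover the cut $(-\infty,-x_{1}]$ does not disconnect a punctured neighbourhood of $\infty$ (its two sides are joined through $\{\re z>0\}$), so $\vec{\varphi}^{\hspace{0.4mm}\mathbb{C}}_{+}(\infty)=\vec{\varphi}^{\hspace{0.4mm}\mathbb{C}}_{-}(\infty)=\vec{\varphi}^{\hspace{0.4mm}\mathbb{C}}(\infty)$, whence $2\vec{\varphi}^{\hspace{0.4mm}\mathbb{C}}(\infty)=\vec{c}_{g+1}=\vec{e}_{1}-\vec{e}_{g+1}=\vec{e}_{1}$, i.e. $\vec{\varphi}^{\hspace{0.4mm}\mathbb{C}}(\infty)=\tfrac{1}{2}\vec{e}_{1}$. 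Finally, \eqref{AbelMapEndPts} is exactly the pair of displayed relations above after substituting $\vec{c}_{j}=\vec{e}_{1}-\vec{e}_{j}$, $\vec{c}_{j+1}=\vec{e}_{1}-\vec{e}_{j+1}$ and $\vec{d}_{j}=\sum_{k=1}^{j}\vec{\tau}_{k}$.

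\textbf{Main obstacle.} Everything is elementary, but the genuinely delicate step is the sign bookkeeping in the cut-constant argument: fixing once and for all the orientations of the cycles $A_{l}$, $B_{k}$ as drawn in Figure \ref{fig:homology} and the sheet-gluing convention, and checking that the cancellations give $\int_{-x_{2j}}^{-x_{2j+1}}\vec{\omega}_{+}^{t}=\tfrac{1}{2}(\vec{e}_{j}-\vec{e}_{j+1})$ rather than its negative (and likewise $\gamma\simeq B_{1}+\cdots+B_{j}$ with the correct sign). A way to reduce this bookkeeping is to observe that $2\vec{\varphi}^{\hspace{0.4mm}\mathbb{C}}(-x_{k})-2\vec{\varphi}^{\hspace{0.4mm}\mathbb{C}}(\infty)$ lies in the period lattice by Abel's theorem applied to the function $z+x_{k}$ on $X$, so each $\vec{\varphi}^{\hspace{0.4mm}\mathbb{C}}(-x_{k})$ is a half-period, and then to invoke the classical description of the Abel images of the branch points of a hyperelliptic curve relative to a canonical homology basis.
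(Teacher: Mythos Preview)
Your proof is correct. The paper does not actually give a proof of this proposition; it only states that ``the following properties of $\vec{\varphi}^{\hspace{0.4mm}\mathbb{C}}$ follow readily from its definition.'' Your argument supplies precisely the details the paper omits: the constancy of $\vec{\varphi}_+^{\hspace{0.4mm}\mathbb{C}}\pm\vec{\varphi}_-^{\hspace{0.4mm}\mathbb{C}}$ on cuts/gaps from the sign change of $\sqrt{\mathcal{R}}$, the identification of the gap constant with $\sum_{k=1}^j\vec{\tau}_k$ via the $B$-periods, and the recursive determination of the cut constants via the $A$-period normalization. Your own caveat about the sign bookkeeping is well taken, but the check is straightforward once one fixes the conventions of Figure~\ref{fig:homology}; note in particular that your computation $\vec{\varphi}_+^{\hspace{0.4mm}\mathbb{C}}(-R)-\vec{\varphi}_-^{\hspace{0.4mm}\mathbb{C}}(-R)\to 0$ as $R\to\infty$ (from the $|z|^{-3/2}$ decay of $\vec{\omega}$) is the clean way to pin down $\vec{\varphi}^{\hspace{0.4mm}\mathbb{C}}(\infty)$ without delicate topology at $\infty$.
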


Define 
\begin{align*}
    \beta(z) = \sqrt[4]{\frac{(z+x_2)(z+x_4)\cdots(z+x_{2g})}{(z+x_1)(z+x_3)\cdots(z+x_{2g+1})}}, \qquad z \in \mathbb{C}\setminus \bigcup_{j=1}^{g+1}[x_{2j},x_{2j-1}],
\end{align*}
where the branch is chosen such that $\beta(z) > 0$ for $z > -x_1$, and define $\mathcal{G}:\mathbb{C}^g\to\mathbb{C}$ by
\begin{equation}\label{mathcalGdef}
\mathcal{G}(\vec z):=\frac{\theta(\vec z+\vec\nu)}{\theta(\vec z)}.
\end{equation}
For any $\vec{\lambda}, \vec{\lambda}'\in\mathbb{Z}^g$, $\mathcal{G}$ satisfies
\begin{align}\label{periodicity of G}
    \mathcal{G}(\vec{z}+\vec{\lambda}'+\tau\vec{\lambda})=e^{-2\pi i\vec{\lambda}^{t}\vec{\nu}}\mathcal{G}(\vec{z}).
\end{align}
This periodicity property follows immediately from \eqref{intro: ThetaPeriodic}. Since $-i\tau$ is real and positive definite, $\theta(\vec{u}) > 0$ for all $\vec{u} \in \mathbb{R}^{g}$, see \cite[Section 4]{DKMVZ2}. In particular, $\mathcal{G}(\vec{u}) > 0$ for all $\vec{u} \in \mathbb{R}^{g}$. Recall from \eqref{intro: omegaVec} that the holomorphic differentials $\omega_1,\ldots,\omega_{g}$ are given by
\begin{align}\label{omegaJ}
\omega_j = (1 \;\; z \;\; \cdots \; \; z^{g-1})\mathbb{A}^{-1}\vec{e}_{j}\frac{dz}{\sqrt{\mathcal{R}(z)}} = \sum_{k=1}^{g}(\mathbb{A}^{-1})_{kj}z^{k-1}\frac{dz}{\sqrt{\mathcal{R}(z)}}, \qquad j=1,\ldots,g.
\end{align}
It can now be verified that
\begin{align}\label{tildePinf}
\tilde{P}^{(\infty)}(z)=\tilde{Q}\frac{\beta(z)^{\sigma_3}}{\sqrt{2}}\begin{pmatrix}
    \ds \mathcal{G}(-\vec{\varphi}^{\hspace{0.4mm} \mathbb{C}}(z)+\tfrac{\vec{e}_1}{2}) & \ds i\mathcal{G}(\vec{\varphi}^{\hspace{0.4mm} \mathbb{C}}(z)+\tfrac{\vec{e}_1}{2}) \\
    \ds i\mathcal{G}(-\vec{\varphi}^{\hspace{0.4mm} \mathbb{C}}(z)) & \ds \mathcal{G}(\vec{\varphi}^{\hspace{0.4mm} \mathbb{C}}(z)) 
\end{pmatrix}, \qquad \tilde{Q}=\begin{pmatrix}
\ds \frac{1}{\mathcal{G}(\vec{0})} & 0 \\
\ds \frac{-ic_\mathcal{G}}{\mathcal{G}(\vec{0})} & \ds \frac{1}{\mathcal{G}(\frac{\vec{e}_1}{2})}
\end{pmatrix}
\end{align}
where 
\begin{equation}\label{cg const}
c_\mathcal{G}:=\frac{2(\mathbb{A}^{-1})_{g:}\nabla\mathcal{G}\left(\frac{\vec{e}_1}{2}\right)}{\mathcal{G}\left(\frac{\vec{e}_1}{2}\right)} = \frac{2}{\mathcal{G}\left(\frac{\vec{e}_1}{2}\right)}\sum_{j=1}^{g} (\mathbb{A}^{-1})_{gj} \boldsymbol{\partial}_{j}\mathcal{G}\big(\tfrac{\vec{e}_1}{2}\big),
\end{equation}
$(\mathbb{A}^{-1})_{g:}$ denotes the $g$-th row of $\mathbb{A}^{-1}$, $\nabla\mathcal{G}$ is the gradient of $\mathcal{G}$ (of size $g \times 1$), and $\boldsymbol{\partial}_{j}\mathcal{G}$ denotes the $j$-th partial derivative of $\mathcal{G}$. Moreover,
\begin{equation}\label{tildePinf112}
\tilde{P}^{(\infty)}_{1,12}=\frac{-2i(\mathbb{A}^{-1})_{g:}\nabla\mathcal{G}(\vec{0})}{\mathcal{G}(\vec{0})} = \frac{-2i}{\mathcal{G}(\vec{0})}\sum_{j=1}^{g} (\mathbb{A}^{-1})_{gj}\boldsymbol{\partial}_{j}\mathcal{G}(\vec{0}).
\end{equation}
Note that $P^{(\infty)}$ and $\tilde{P}^{(\infty)}$ have the same jumps on $(-x_{2j},-x_{2j-1})$, $j=1,\dots,g+1$ but not on $(-x_{2j+1},-x_{2j})$, $j=1,\dots,g$. Let us introduce the following function
\begin{equation}\label{szegoD}
D(z)=\exp\left(\sqrt{\mathcal{R}(z)}\sum_{j=0}^g \int_{-x_{2j+1}}^{-x_{2j}}\frac{(\tilde{\alpha}_0+\cdots+\tilde{\alpha}_j)ds}{\sqrt{\mathcal{R}(s)}(s-z)}\right),
\end{equation}
where $\tilde{\alpha}_0:=\frac{\alpha}{2}$ and the constants $\tilde{\alpha}_j$ are defined by \eqref{def of alpha tilde}. The definition of the $\tilde{\alpha}_{j}$'s ensure that $D(z) \to 1$ as $z \to \infty$. It can be verified that $D$ satisfies the following properties:
\begin{enumerate}
    \item[(a)] $D:\mathbb{C}\setminus(-\infty,0]\to\mathbb{C}$ is analytic.
    
\item[(b)] $D$ has the following jumps:
\begin{align}
& D_+(z)D_-(z)=1, & & z \in (-x_{2j},-x_{2j-1}), ~ j=1,\dots,g+1, \label{jump1 for D} \\
& D_+(z)=D_-(z)e^{2\pi i(\tilde{\alpha}_0+\cdots+\tilde{\alpha}_j)}, & & z \in (-x_{2j+1},-x_{2j}), ~ j=0,\dots,g. \label{jump2 for D}
\end{align}
\item[(c)] As $z\to\infty$, we have
\begin{align}\label{Dinf}
D(z)=\text{exp}\left(\frac{d_1}{\sqrt{z}}+\frac{d_2}{z^\frac{3}{2}}+\bigO(z^{-\frac{5}{2}})\right)=1+\frac{d_1}{\sqrt{z}}+\frac{d_1^2}{2z}+\frac{\frac{d_1^3}{6}+d_2}{z^\frac{3}{2}}+\bigO(z^{-2}),
\end{align}
where $d_{1}$ is given by \eqref{def of d1}.
\item[(d)] As $z\to 0$, we have
\begin{align}
D(z)=z^{\tilde{\alpha}_0}\bigO(1).
\end{align}
\item[(e)] As $z\to-x_j$, $\im z > 0$,
\begin{align}
D(z)=e^{i\pi(\tilde{\alpha}_{0}+\cdots+\tilde{\alpha}_{\tilde{j}})}+\bigO(\sqrt{z+x_j}),
\end{align}
for $j=1,\dots,2g+1$, where $\tilde{j}=\left\lfloor\frac{j}{2}\right\rfloor$.
\end{enumerate}
We are now ready to solve the RH problem for $P^{(\infty)}(z)$.

\begin{proposition}
The solution to the RH problem for $P^{(\infty)}$ is given by
\begin{align}\label{Pinf}
P^{(\infty)}(z)=\begin{pmatrix} 1 & 0 \\ -id_1 & 1 \end{pmatrix}\tilde{P}^{(\infty)}(z)D(z)^{\sigma_3},
\end{align}
where $\nu_{1},\ldots,\nu_{g}$ are defined in \eqref{intro: nu}.
\end{proposition}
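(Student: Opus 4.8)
The plan is to verify directly that the matrix on the right-hand side of \eqref{Pinf}, call it $P^{(\infty)}(z)=\left( \begin{smallmatrix} 1 & 0 \\ -id_1 & 1\end{smallmatrix} \right)\tilde{P}^{(\infty)}(z)D(z)^{\sigma_3}$, satisfies conditions (a)--(e) of the RH problem for $P^{(\infty)}$, and then to deduce uniqueness by a Liouville argument. I take as given that the explicit formula \eqref{tildePinf} solves the RH problem for $\tilde{P}^{(\infty)}$ and that the function $D$ of \eqref{szegoD} has the properties (a)--(e) listed after its definition. Condition (a) is immediate: $\tilde{P}^{(\infty)}$ is analytic off $(-\infty,-x_1]$, $D$ is analytic off $(-\infty,0]$, the left prefactor is constant, and there is nothing to check at $z=0$ because $0>-x_1$.

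For the jumps I would split $(-\infty,0]$ into bands and gaps. On a band $(-x_{2j},-x_{2j-1})$, $j=1,\dots,g+1$, the factor $\tilde{P}^{(\infty)}$ picks up $\left(\begin{smallmatrix}0&1\\-1&0\end{smallmatrix}\right)$ on the right while $D_+D_-=1$ gives $D_+^{\sigma_3}=D_-^{-\sigma_3}$; since $\left(\begin{smallmatrix}0&1\\-1&0\end{smallmatrix}\right)D_-^{-\sigma_3}=D_-^{\sigma_3}\left(\begin{smallmatrix}0&1\\-1&0\end{smallmatrix}\right)$, this yields $P^{(\infty)}_+=P^{(\infty)}_-\left(\begin{smallmatrix}0&1\\-1&0\end{smallmatrix}\right)$. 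On a gap $(-x_{2j+1},-x_{2j})$ with $j\geq1$, both $\tilde{P}^{(\infty)}$ and $D^{\sigma_3}$ have diagonal, hence commuting, jumps, so the jump of $P^{(\infty)}$ is $e^{2\pi i(\nu_1+\cdots+\nu_j+\tilde\alpha_0+\cdots+\tilde\alpha_j)\sigma_3}$; inserting the defining relation $\nu_k+\tilde\alpha_k=-\Omega_k\sqrt r/(2\pi)$ from \eqref{intro: nu}, the exponent reduces to $2\pi i\big(\tfrac{\alpha}{2}-\widehat{\Omega}_j\sqrt r/(2\pi)\big)\sigma_3=(\pi i\alpha-i\widehat{\Omega}_j\sqrt r)\sigma_3$, which is exactly the prescribed jump. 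The remaining gap $(-x_1,0)$ is the case $j=0$: there $\tilde{P}^{(\infty)}$ has no jump and $D$ contributes $e^{2\pi i\tilde\alpha_0}=e^{\pi i\alpha}$, matching $e^{(\pi i\alpha-i\widehat{\Omega}_0\sqrt r)\sigma_3}$ since $\widehat{\Omega}_0=0$. This establishes (b).

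The one place requiring genuine care is the behaviour at $\infty$, condition \eqref{eq:Pinf asympinf}; this is precisely where the prefactor $\left(\begin{smallmatrix}1&0\\-id_1&1\end{smallmatrix}\right)$ is needed. Since $M\sigma_3M^{-1}$ is anti-diagonal with $(M\sigma_3M^{-1})^2=I$, we have $MD(z)^{\sigma_3}M^{-1}=\exp\!\big(\log D(z)\,M\sigma_3M^{-1}\big)=\cosh(\log D)\,I+\sinh(\log D)\,M\sigma_3M^{-1}$; plugging in $\log D(z)=d_1 z^{-1/2}+\bigO(z^{-3/2})$ from \eqref{Dinf} and conjugating by $z^{-\sigma_3/4}$ gives
\begin{align*}
z^{-\frac{\sigma_3}{4}}M D(z)^{\sigma_3}=\left( \begin{pmatrix} 1 & 0 \\ id_1 & 1\end{pmatrix}+\bigO(z^{-1}) \right)z^{-\frac{\sigma_3}{4}}M \quad \text{as } z\to\infty.
\end{align*}
Combining with $\tilde{P}^{(\infty)}(z)=\big(I+\tilde{P}^{(\infty)}_1 z^{-1}+\bigO(z^{-2})\big)z^{-\sigma_3/4}M$ yields $\tilde{P}^{(\infty)}(z)D(z)^{\sigma_3}=\big(\left(\begin{smallmatrix}1&0\\ id_1&1\end{smallmatrix}\right)+\bigO(z^{-1})\big)z^{-\sigma_3/4}M$, and left-multiplying by $\left(\begin{smallmatrix}1&0\\-id_1&1\end{smallmatrix}\right)$ cancels the constant lower-triangular term, so $P^{(\infty)}(z)=(I+\bigO(z^{-1}))z^{-\sigma_3/4}M$, which is \eqref{eq:Pinf asympinf} (and lets one read off $P^{(\infty)}_1$ later if needed). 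Conditions (d) and (e) are then quick: near $-x_j$ we have $\tilde{P}^{(\infty)}(z)=\bigO((z+x_j)^{-1/4})$ while $D(z)^{\pm1}$ stay bounded, hence $P^{(\infty)}(z)=\bigO((z+x_j)^{-1/4})$; near $0$, \eqref{szegoD} gives $D(z)=z^{\alpha/2}\widehat D(z)$ with $\widehat D$ analytic and nonvanishing, so $D(z)^{\sigma_3}z^{-\frac{\alpha}{2}\sigma_3}=\widehat D(z)^{\sigma_3}$ is analytic and invertible at $0$, whence $P^{(\infty)}(z)z^{-\frac{\alpha}{2}\sigma_3}$ is analytic there.

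For uniqueness, let $\widehat P$ be any solution of the RH problem for $P^{(\infty)}$ and set $R:=\widehat P\,(P^{(\infty)})^{-1}$. The jumps act on the right and coincide, so $R$ has no jump and is analytic on $\mathbb{C}\setminus\{0,-x_1,\dots,-x_{2g+1}\}$; at each $-x_j$ it is $\bigO((z+x_j)^{-1/2})$, so $(z+x_j)R(z)\to0$ and the singularity is removable, and at $0$ the factors $z^{\frac{\alpha}{2}\sigma_3}$ cancel so $R$ is analytic there as well (here one uses that $\det P^{(\infty)}$ is a nonzero constant, checked the same way, so the analytic factor in (e) is invertible). Hence $R$ is entire and $R(z)\to I$ as $z\to\infty$, so $R\equiv I$ by Liouville, i.e.\ $\widehat P=P^{(\infty)}$. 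The only real obstacle here is the bookkeeping behind \eqref{eq:Pinf asympinf}: one must see that the lower-triangular piece generated by $D^{\sigma_3}$ at infinity is constant to leading order and is exactly cancelled by $\left(\begin{smallmatrix}1&0\\-id_1&1\end{smallmatrix}\right)$; everything else is routine once the properties of $\tilde{P}^{(\infty)}$ and $D$ are in hand.
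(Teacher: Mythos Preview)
Your proof is correct and follows the same verification strategy as the paper, which merely states that one checks the RH conditions using the properties of $D$, $\tilde{P}^{(\infty)}$, and the definition of $\nu_j$. Your version is considerably more detailed---in particular you spell out the asymptotic computation at $\infty$ showing why the prefactor $\left(\begin{smallmatrix}1&0\\-id_1&1\end{smallmatrix}\right)$ is needed, and you supply a standard Liouville uniqueness argument that the paper omits---but the underlying approach is the same.
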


\begin{proof}
Using the properties of $D(z)$, $\tilde{P}^{(\infty)}(z)$, and the definition of $\nu_1,\ldots,\nu_g$, we verify that the right-hand side of \eqref{Pinf} solves the RH problem for $P^{(\infty)}$.
\end{proof}

\subsection{Various asymptotics of $P^{(\infty)}(z)$}\label{subsec: Pinf expasions}
For future references, we compute $P^{(\infty)}_{1,12}$ as well as the first terms in the expansions of $P^{(\infty)}(z)$ as $z\to-x_j$, $\im z > 0$, $j=1,\ldots,2g+1$.

\subsubsection{Computation of $P^{(\infty)}_{1,12}$}
From \eqref{tildePinf}, \eqref{tildePinf112}, \eqref{Dinf} we obtain
\begin{align}
    P^{(\infty)}(z)=\left(I+\frac{1}{z}\left[\begin{pmatrix} 1 & 0 \\ -id_1 & 1 \end{pmatrix}\tilde{P}^{(\infty)}_1\begin{pmatrix} 1 & 0 \\ id_1 & 1 \end{pmatrix}+\begin{pmatrix} \frac{d_1^2}{2} & -id_1 \\ -\frac{i}{3}(d_{1}^{3}-3d_{2}) & -\frac{d_1^2}{2} \end{pmatrix}\right]+\bigO(z^{-2})\right)z^{-\frac{\sigma_3}{4}}M \nonumber
\end{align}
as $z\to\infty$, and in particular, using \eqref{tildePinf112}
\begin{align}\label{Pinf112}
P^{(\infty)}_{1,12}=\tilde{P}^{(\infty)}_{1,12} - id_1 =\frac{-2i(\mathbb{A}^{-1})_{g:}\nabla\mathcal{G}(\vec{0})}{\mathcal{G}(\vec{0})} - id_1.
\end{align}

\subsubsection{Asymptotics of $P^{(\infty)}(z)$ as $z\to-x_{2j}$, $\im z>0$, $j=1,\ldots,g$}

Associated to the Jacobian variety $J(X)$ are a total of $2^{2g}$ half-periods, $2^{g-1}(2^g -1)$ of which are odd half-periods, see e.g. \cite[p. 303]{FK1992}. It is well-known, see e.g. \cite[p. 304]{FK1992}, that the $\theta$-function vanishes at each odd half-period. Proposition \ref{AbelMapProperties} allows us to express some of the odd half-periods in terms of the Abel map. 

\begin{lemma}\label{halfperiodlemma}
The points $\vec\varphi(-x_{2j+1}) \in J(X)$, $j = 1, \dots, g$, are odd half-periods. The points $\vec\varphi(-x_{2j}) + \tfrac{\vec{e}_1}{2} \in J(X)$, $j = 1, \dots, g$, are also odd half-periods. 
\end{lemma}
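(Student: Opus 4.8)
The plan is to verify the two claims directly from the explicit formulas for the Abel map values at the branch points given in Proposition~\ref{AbelMapProperties}, together with the standard characterization that a point of $J(X)$ is an odd half-period precisely when it lies in $\frac{1}{2}L(X)$ and the associated $\theta$-function $\vec{z}\mapsto \theta(\vec{z})$ vanishes there — equivalently, when the corresponding divisor class is a theta-characteristic with odd parity. First I would record from \eqref{AbelMapEndPts} that for $j=1,\dots,g$,
\begin{align*}
\vec\varphi(-x_{2j+1}) &= \tfrac{1}{2}(\vec{e}_1-\vec{e}_{j+1}) + \tfrac{1}{2}\sum_{k=1}^{j}\vec{\tau}_k, \\
\vec\varphi(-x_{2j}) + \tfrac{\vec{e}_1}{2} &= \vec{e}_1 - \tfrac{1}{2}\vec{e}_j + \tfrac{1}{2}\sum_{k=1}^{j}\vec{\tau}_k \equiv -\tfrac{1}{2}\vec{e}_j + \tfrac{1}{2}\sum_{k=1}^{j}\vec{\tau}_k \pmod{L(X)},
\end{align*}
using that $\vec{e}_1\in L(X)$ and that $\vec{\varphi}$ is well-defined modulo $L(X)$; here I take the value from the upper half-plane, but the two boundary values differ by an element of $L(X)$ so the class in $J(X)$ is unambiguous. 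In both cases the point is visibly of the form $\tfrac{1}{2}(\vec{n}+\tau\vec{m})$ with $\vec{n},\vec{m}\in\mathbb{Z}^g$, hence a half-period.

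Next I would compute the parity. A half-period $\tfrac{1}{2}(\vec{n}+\tau\vec{m})$ is odd exactly when $\vec{n}^t\vec{m}$ is odd (this is the classical parity formula; it can also be read off from the functional equation \eqref{intro: ThetaPeriodic} applied to $\theta$ at the half-period, which forces $\theta=0$ precisely in the odd case). For $\vec\varphi(-x_{2j+1})$ we have $\vec{n} = \vec{e}_1-\vec{e}_{j+1}$ and $\vec{m} = \sum_{k=1}^j\vec{e}_k = (1,\dots,1,0,\dots,0)^t$ (first $j$ entries equal to $1$); since $1\le j+1$ and $j+1 > j$ exactly when — wait, one must treat the index $j+1$ carefully: $\vec{e}_{j+1}$ has its support outside $\{1,\dots,j\}$, and $\vec{e}_1$ has support inside $\{1,\dots,j\}$ for $j\ge 1$, so $\vec{n}^t\vec{m} = 1 - 0 = 1$, which is odd. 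For $\vec\varphi(-x_{2j})+\tfrac{\vec{e}_1}{2}$ we have (modulo $L(X)$) $\vec{n} = -\vec{e}_j$ and $\vec{m} = \sum_{k=1}^j \vec{e}_k$, so $\vec{n}^t\vec{m} = -1$, again odd. Hence both families of points are odd half-periods, and in particular $\theta$ vanishes at each of them.

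The main obstacle, such as it is, is bookkeeping rather than depth: I must be careful that the ``$\vec{e}_1$'' shift and the $\pmod{L(X)}$ reductions are handled consistently (e.g. $\vec{e}_1 - \tfrac12\vec{e}_j + \tfrac12\sum\vec\tau_k$ versus $-\tfrac12\vec{e}_j+\tfrac12\sum\vec\tau_k$ give the same class, and the parity must be computed on a representative with half-integer, not integer, coordinates), and that the convention for which half-period is ``odd'' matches the sign conventions of \eqref{intro: theta def}–\eqref{intro: ThetaPeriodic}. I would also note the edge cases $j=g$ (where $\vec{e}_{j+1}=\vec{e}_{g+1}:=\vec{0}$, so $\vec\varphi(-x_{2g+1}) = \tfrac12\vec{e}_1+\tfrac12\sum_{k=1}^g\vec\tau_k$ still has $\vec{n}^t\vec{m}=1$) and confirm the count is consistent with there being $2^{g-1}(2^g-1)$ odd half-periods in total, of which we have exhibited $2g$. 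One clean way to package all of this is to invoke the theta-characteristic viewpoint: $\vec\varphi(-x_{2j+1})$ and $\vec\varphi(-x_{2j})+\tfrac{\vec e_1}{2}$ correspond, via Riemann's vanishing theorem and the Riemann constant, to effective theta-characteristics supported on appropriate subsets of the branch points, whose parities are then read off combinatorially; but the direct computation above is shorter and self-contained given Proposition~\ref{AbelMapProperties}.
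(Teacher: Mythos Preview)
Your proposal is correct and takes essentially the same approach as the paper: read off the half-period representation $\tfrac12(\vec{n}+\tau\vec{m})$ from the explicit values in Proposition~\ref{AbelMapProperties} and check that $\vec{n}^{t}\vec{m}$ is odd in each case. The paper's proof is simply a terser version of yours---it asserts without spelling out the arithmetic that $\vec{\alpha}_k^{t}\vec{\beta}_k$ is odd---so your more explicit computation (including the edge case $j=g$ and the remark that the parity is well-defined modulo $L(X)$) is a faithful expansion of the same argument.
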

\begin{proof}
It follows from \eqref{AbelMapEndPts} that $\vec{\varphi}(-x_{k})$ can be written in the form
\begin{align*}
\vec{\varphi}(-x_{k}) = \frac{1}{2}\vec{\alpha}_{k} + \frac{1}{2} \tau \vec{\beta}_{k} \mod L(X) \qquad \mbox{ for certain } \vec{\alpha}_{k},\vec{\beta}_{k} \in \mathbb{Z}^{g}, \qquad k=1,\ldots,2g+1.
\end{align*}
Since $\vec{\alpha}_{k}^{t}\vec{\beta}_{k}$ is odd if $k = 2j + 1$ for $j = 1, \dots, g$, we conclude that $\vec\varphi(-x_{2j+1}) \in J(X)$, $j = 1, \dots, g$, are odd half-periods.
It follows similarly from \eqref{AbelMapEndPts} that $\vec\varphi(-x_{2j}) + \tfrac{\vec{e}_1}{2}$, $j = 1, \dots, g$, are odd half-periods. 
\end{proof}

Lemma \ref{halfperiodlemma} implies that 
\begin{align}\label{zeros of theta Abel odd}
\theta\big(\pm\vec{\varphi}_{+}^{\hspace{0.4mm} \mathbb{C}}(-x_{2j})+\tfrac{\vec{e}_1}{2}\big)
= \theta\big(\vec\varphi(-x_{2j})+\tfrac{\vec{e}_1}{2}\big) = 0, \qquad j=1,\dots,g.
\end{align}
Using \eqref{zeros of theta Abel odd}, we infer that the asymptotics of $P^{(\infty)}(z)$ as $z\to-x_{2j}$, $\im z>0$, $j=1,\ldots,g$ are given by
\begin{align}
& P^{(\infty)}(z) = (P^{(\infty)})_{-x_{2j}}^{(-\frac{1}{4})}(z+x_{2j})^{-\frac{1}{4}} + \bigO((z+x_{2j})^{\frac{1}{4}}), \label{Pinf at -x2j} \\
& (P^{(\infty)})_{-x_{2j}}^{(-\frac{1}{4})}=\begin{pmatrix} 1 & 0 \\ -id_1 & 1 \end{pmatrix}\tilde{Q}\frac{(\beta_{2j}^{(\frac{1}{4})})^{\sigma_3}}{\sqrt{2}}\begin{pmatrix} \tilde{\mathcal{G}}_{2j}^- & i\tilde{\mathcal{G}}_{2j}^+ \\ i\mathcal{G}(-\vec{\varphi}_{+}^{\hspace{0.4mm} \mathbb{C}}(-x_{2j})) & \mathcal{G}(\vec{\varphi}_{+}^{\hspace{0.4mm} \mathbb{C}}(-x_{2j})) \end{pmatrix}e^{i\pi(\tilde{\alpha}_j+\cdots+\tilde{\alpha}_j)\sigma_3}, \label{Pinfx2j}
\end{align}
where
\begin{align}
\beta(z)&=\beta_{2j}^{(\frac{1}{4})}(z+x_{2j})^{\frac{1}{4}}+\bigO((z+x_{2j-1})^\frac{5}{4}), \qquad \beta_{2j}^{(\frac{1}{4})}=e^{-\frac{\pi i}{4}}\frac{\prod_{\substack{k=1 \\ k\neq j}}^g|x_{2k}-x_{2j}|^\frac{1}{4}}{\prod_{k=0}^{g}|x_{2k+1}-x_{2j}|^\frac{1}{4}}, \label{beta2j} \\
\tilde{\mathcal{G}}_{2j}^\pm&:=\lim_{\substack{z\to-x_{2j} \\\im z > 0}}\sqrt{z+x_{2j}}\frac{\theta(\pm\vec{\varphi}^{\hspace{0.4mm} \mathbb{C}}(z)+\tfrac{\vec{e}_1}{2}+\vec\nu)}{\theta(\pm\vec{\varphi}^{\hspace{0.4mm} \mathbb{C}}(z)+\tfrac{\vec{e}_1}{2})}=\frac{\theta(\pm\vec{\varphi}_{+}^{\hspace{0.4mm} \mathbb{C}}(-x_{2j})+\frac{\vec{e}_1}{2}+\vec\nu)\prod_{\substack{k=1 \\ k\neq2j}}^{2g+1}\sqrt{x_k-x_{2j}}_{+}}{\pm2\sum_{k=1}^{g}p_{k}(-x_{2j}) \boldsymbol{\partial}_{k}\theta(\pm\vec{\varphi}_{+}^{\hspace{0.4mm} \mathbb{C}}(-x_{2j})+\frac{\vec{e}_1}{2})}. \nonumber
\end{align}
\subsubsection{Asymptotics of $P^{(\infty)}(z)$ as $z \to-x_{1}$, $\im z >0$}
The asymptotics for $P^{(\infty)}(z)$ as $z \to-x_{1}$, $\im z >0$, are given by
\begin{align}
& P^{(\infty)}(z) = (P^{(\infty)})_{-x_{1}}^{(-\frac{1}{4})}(z+x_{1})^{-\frac{1}{4}} + \bigO((z+x_{1})^{\frac{1}{4}}), \label{Pinf at -x1} \\
& (P^{(\infty)})_{-x_{1}}^{(-\frac{1}{4})}=\frac{\beta_1^{(-\frac{1}{4})}\mathcal{G}(\frac{\vec{e}_1}{2})}{\sqrt{2}\mathcal{G}(\vec{0})}\begin{pmatrix} 1 & i \\ -i(c_\mathcal{G}+d_1) & c_\mathcal{G}+d_1 \end{pmatrix}e^{\frac{i\pi\alpha}{2}\sigma_3}, \label{Pinfx1}
\end{align}
where
\begin{align*}
\beta(z)=\frac{\beta_{1}^{(-\frac{1}{4})}}{(z+x_{1})^{\frac{1}{4}}}+\bigO((z+x_{1})^\frac{3}{4}), \qquad \beta_{1}^{(-\frac{1}{4})}=\prod_{k=1}^g\frac{|x_{2k}-x_1|^\frac{1}{4}}{|x_{2k+1}-x_1|^\frac{1}{4}}.
\end{align*}
\subsubsection{Asymptotics of $P^{(\infty)}(z)$ as $z \to-x_{2j+1}$, $\im z >0$, $j=1,\dots,g$}
The asymptotics for $P^{(\infty)}(z)$ as $z \to-x_{2j+1}$, $\im z >0$, $j=1,\dots,g$, are given by
\begin{align}
& P^{(\infty)}(z) = (P^{(\infty)})_{-x_{2j+1}}^{(-\frac{1}{4})}(z+x_{2j+1})^{-\frac{1}{4}} + \bigO((z+x_{2j+1})^{\frac{1}{4}}), \label{Pinf at -x2j+1} \\
& (P^{(\infty)})_{-x_{2j+1}}^{(-\frac{1}{4})}=\begin{pmatrix} 1 & 0 \\ -id_1 & 1 \end{pmatrix}\tilde{Q}\frac{(\beta_{2j+1}^{(-\frac{1}{4})})^{\sigma_3}}{\sqrt{2}}\begin{pmatrix} \mathcal{G}(-\vec{\varphi}_{+}^{\hspace{0.4mm} \mathbb{C}}(-x_{2j+1})+\frac{\vec{e}_1}{2}) & i\mathcal{G}(\vec{\varphi}_{+}^{\hspace{0.4mm} \mathbb{C}}(-x_{2j+1})+\frac{\vec{e}_1}{2}) \\ i\tilde{\mathcal{G}}_{2j+1}^- & \tilde{\mathcal{G}}_{2j+1}^+ \end{pmatrix}e^{i\pi(\tilde{\alpha}_0+\cdots+\tilde{\alpha}_j)\sigma_3}, \nonumber
\end{align}
where
\begin{align}
\beta(z)&=\frac{\beta_{2j+1}^{(-\frac{1}{4})}}{(z+x_{2j+1})^{\frac{1}{4}}}+\bigO((z+x_{2j+1})^\frac{3}{4}), \qquad \beta_{2j+1}^{(-\frac{1}{4})}=\frac{\prod_{k=1}^g|x_{2k}-x_{2j+1}|^\frac{1}{4}}{\prod_{\substack{k=0 \\ k\neq j}}^{g}|x_{2k+1}-x_{2j+1}|^\frac{1}{4}},\label{beta2j-1} \\   
\tilde{\mathcal{G}}_{2j+1}^\pm&:=\lim_{\substack{z\to-x_{2j+1} \\ \im z > 0}}\sqrt{z+x_{2j+1}}\frac{\theta(\pm\vec{\varphi}^{\hspace{0.4mm} \mathbb{C}}(z)+\vec\nu)}{\theta(\pm\vec{\varphi}^{\hspace{0.4mm} \mathbb{C}}(z))}=\frac{\theta(\pm\vec{\varphi}_{+}^{\hspace{0.4mm} \mathbb{C}}(-x_{2j+1})+\vec\nu)\prod_{\substack{k=1 \\ k\neq2j+1}}^{2g+1}\sqrt{x_k-x_{2j+1}}_{+}}{\pm 2 \sum_{k=1}^{g}p_{k}(-x_{2j+1})\boldsymbol{\partial}_{k}\theta(\pm\vec{\varphi}_{+}^{\hspace{0.4mm} \mathbb{C}}(-x_{2j+1}))}.
\end{align}

\section{Local parametrices}\label{section: local parametrix}
The local parametrix $P^{(-x_{j})}$ is defined in a small open disk $\mathbb{D}_{-x_{j}}$ centered at $-x_{j}$, $j=1,\ldots,2g+1$. The radii of the disks are chosen sufficiently small, but independent of $r$, such that they do not intersect each other. Inside $\mathbb{D}_{-x_{j}}$, $P^{(-x_{j})}$ has the same jumps as $S$, and is such that $S(z)P^{(-x_{j})}(z)^{-1}=\bigO(1)$ as $z \to -x_{j}$. Furthermore, on the boundary of $\mathbb{D}_{-x_{j}}$, we impose that $P^{(-x_{j})}$ ``mathches" with $P^{(\infty)}$, in the sense that
\begin{align}\label{matching weak}
P^{(x_{j})}(z) = (I+o(1))P^{(\infty)}(z) \qquad \mbox{as } r \to + \infty,
\end{align}
uniformly for $z \in \partial \mathbb{D}_{-x_{j}}$. 
We will follow the pioneering work \cite{DIZ} and build these parametrices in terms of Bessel functions. We will show in Section \ref{section:smallnorm} that $P^{(-x_{j})}(z)$ approximates $S(z)$ for $z \in \mathbb{D}_{-x_{j}}$.
\subsection{Parametrix at $-x_{2j+1},$ $j=0,\dots,g$}
Let $f_{2j+1}$ be the conformal map from $\mathbb{D}_{-x_{2j+1}}$ to a neighborhood of $0$ defined by
\begin{equation}\label{-x2j-1coord}
f_{2j+1}(z)=\frac{1}{4}\bigg(\gg(z)\mp\frac{i\widehat{\Omega}_{j}}{2}\bigg)^2, \qquad \pm \im z > 0.
\end{equation}
The expansion of $f_{2j+1}(z)$ as $z \to -x_{2j+1}$ is given by
\begin{align}
& f_{2j+1}(z) = c_{2j+1}(z+x_{2j+1})\big( 1+ \bigO(z+x_{2j+1}) \big), & & c_{2j+1}=\frac{q(-x_{2j+1})^2}{\prod_{\substack{k=1 \\ k\neq 2j+1}}^{2g+1}(x_k-x_{2j+1})}>0. \label{c2j-1} 
\end{align}
We deform the lenses such that $f_{2j+1}(\gamma_{\pm}\cap \mathbb{D}_{-x_{2j+1}}) \subseteq e^{\pm \frac{2\pi i}{3}}(0,+\infty)$, and define
\begin{align}
& P^{(-x_{2j+1})}(z):=E_{2j+1}(z)\Phi_{\text{Be}}\left(rf_{2j+1}(z);0\right)e^{-\sqrt{r}\gg(z)\sigma_3}e^{\pm\frac{i\pi\alpha}{2}\sigma_3}, \qquad \pm \im z > 0, \label{P-x2j-1} \\
& E_{2j+1}(z) = P^{(\infty)}(z)e^{\mp(i\pi\alpha-i\widehat{\Omega}_{j}\sqrt{r})\frac{\sigma_3}{2}}M^{-1}\left(2\pi\sqrt{r}f_{2j+1}(z)^{\frac{1}{2}}\right)^{\frac{\sigma_3}{2}}, \qquad \pm \im z > 0, \label{def of E2j-1}
\end{align}
where $\Phi_{\mathrm{Be}}$ is the solution to the RH problem presented in Section \ref{Section:Appendix}. Using the jumps of $P^{(\infty)}$, we verify that $E_{2j+1}(z)$ is analytic for $z\in\mathbb{D}_{-x_{2j+1}}$, and using the jumps \eqref{jump1 g 2cuts}--\eqref{jump3 g 2cuts} of $\mathfrak{g}$, that $P^{(-x_{2j+1})}$ has the same jumps as $S$ inside $\mathbb{D}_{-x_{2j+1}}$ and satisfies $S(z)P^{(-x_{j})}(z)^{-1}=\bigO(1)$ as $z \to -x_{2j+1}$. A direct computation using \eqref{large z asymptotics Bessel} shows that
\begin{multline}\label{P-x2j-1asymp}
P^{(-x_{2j+1})}(z)P^{(\infty)}(z)^{-1}=I + \frac{P^{(\infty)}(z)e^{\mp(i\pi\alpha-i\widehat{\Omega}_{j}\sqrt{r})\frac{\sigma_3}{2}}\Phi_{\mathrm{Be,1}}e^{\pm(i\pi\alpha-i\widehat{\Omega}_{j}\sqrt{r})\frac{\sigma_3}{2}}P^{(\infty)}(z)^{-1}}{\sqrt{r}f_{2j+1}(z)^{\frac{1}{2}}} + \bigO(r^{-1})
\end{multline}
as $r\to +\infty$ uniformly for $z\in\partial\mathbb{D}_{-x_{2j+1}}$, where the $\pm$ and $\mp$ signs correspond to $\pm \im z > 0$.

\subsection{Parametrix at $-x_{2j},$ $j=1,\dots,g$}
Let $f_{2j}$ be the conformal map from $\mathbb{D}_{-x_{2j}}$ to a neighborhood of $0$ defined by
\begin{equation}\label{-x2jcoord}
f_{2j}(z)=-\frac{1}{4}\bigg(\gg(z)\mp\frac{i\widehat{\Omega}_{j}}{2}\bigg)^2, \qquad \pm \im z > 0.
\end{equation}
The expansion of $f_{2j}(z)$ as $z \to -x_{2j}$ is given by
\begin{align}
& f_{2j}(z) = c_{2j}(z+x_{2j})\big(1+ \bigO(z+x_{2j}) \big), & & c_{2j}=\frac{-q^2(-x_{2j})}{\prod_{\substack{k=1 \\ k\neq 2j}}^{2g+1}(x_k-x_{2j})}>0. \label{c2j}
\end{align}
We choose the lenses such that $-f_{2j}(\gamma_{\pm}\cap \mathbb{D}_{-x_{2j}}) \subseteq e^{\pm \frac{2\pi i}{3}}(0,+\infty)$. Let us define $P^{(-x_{2j})}$ by
\begin{align}
& P^{(-x_{2j})}(z):=E_{2j}(z)\Phi_{\text{Be}}(-rf_{2j}(z))\sigma_3e^{-\sqrt{r}\gg(z)\sigma_3}e^{\pm\frac{i\pi\alpha}{2}\sigma_3}, & & \pm \im z > 0, \label{P-x2j} \\
& E_{2j}(z) = P^{(\infty)}(z)e^{\mp(i\pi\alpha-i\widehat{\Omega}_{j}\sqrt{r})\frac{\sigma_3}{2}}\sigma_3M^{-1}\left(2\pi\sqrt{r}(-f_{2j}(z))^{\frac{1}{2}}\right)^{\frac{\sigma_3}{2}}, & & \pm \im z > 0, \label{def of E2j}
\end{align}
where we verify that $E_{2j}(z)$ is analytic for $z\in\mathbb{D}_{-x_{2j}}$. Also, a computation using \eqref{large z asymptotics Bessel} shows that
\begin{multline}\label{P-x2jasymp}
P^{(-x_{2j})}(z)P^{(\infty)}(z)^{-1}=I 
 + \frac{P^{(\infty)}(z)e^{\mp(i\pi\alpha-i\widehat{\Omega}_{j}\sqrt{r})\frac{\sigma_3}{2}}\sigma_3\Phi_{\mathrm{Be,1}}\sigma_3e^{\pm(i\pi\alpha-i\widehat{\Omega}_{j}\sqrt{r})\frac{\sigma_3}{2}}P^{(\infty)}(z)^{-1}}{\sqrt{r}(-f_{2j}(z))^{\frac{1}{2}}} + \bigO(r^{-1})
\end{multline}
as $r\to +\infty$ uniformly for $z\in\partial\mathbb{D}_{-x_{2j}}$, where the $\pm$ and $\mp$ signs correspond to $\pm \im z > 0$.

\section{Small norm problem}\label{section:smallnorm}

Consider
\begin{equation}\label{errorMatrix}
R(z):=\begin{cases}
S(z)P^{(-x_j)}(z)^{-1}, &z\in\mathbb{D}_{-x_j}, ~ j=1,\dots,2g+1, \\
S(z)P^{(\infty)}(z)^{-1}, &z\in\mathbb{C}\setminus\bigcup_{j=1}^{2g+1}\mathbb{D}_{-x_j}.
    \end{cases}
\end{equation}
We have verified in Section \ref{section: local parametrix} that $P^{(-x_j)}$ and $S$ have the same jumps inside $\mathbb{D}_{-x_j}$, and that $R(z)=S(z)P^{(-x_j)}(z)^{-1}$ remains bounded as $z \to -x_{j}$, $j=1,\dots,2g+1$. Thus, using the jumps of $P^{(\infty)}$, we infer that $R$ is analytic in $\mathbb{C}\setminus \Sigma_{R}$, where $\Sigma_{R}$ is given by
\begin{align*}
\Sigma_{R} = \bigg(\gamma_+\cup\gamma_-\cup \bigcup_{j=1}^{2g+1} \partial \mathbb{D}_{-x_{j}} \bigg)  \setminus \bigcup_{j=1}^{2g+1}  \mathbb{D}_{-x_{j}}.
\end{align*}
For convenience, we orient the circles $\partial \mathbb{D}_{-x_{j}}$ in the clockwise direction, see Figure \ref{fig:SigmaR}. The jump matrix for $R$, namely
\begin{align*}
J_{R} : \Sigma_{R} \to \mathbb{C}^{2 \times 2}, \quad z \mapsto J_{R}(z):= R_{-}(z)^{-1}R_{+}(z),
\end{align*}
is given by
\begin{align}\label{Ejump}
    J_{R}(z)=\begin{cases}
        P^{(\infty)}(z)\begin{pmatrix} 1 & 0 \\ e^{\pm i\pi\alpha-2\sqrt{r}\gg(z)} & 1 \end{pmatrix}P^{(\infty)}(z)^{-1}, &z\in\gamma_\pm\setminus\bigcup_{j=1}^{2g+1}\mathbb{D}_{-x_j} \\
        P^{(-x_j)}(z)P^{(\infty)}(z)^{-1}, &z\in\partial\mathbb{D}_{-x_j}, ~ j=1,\dots,2g+1.
    \end{cases}
\end{align}
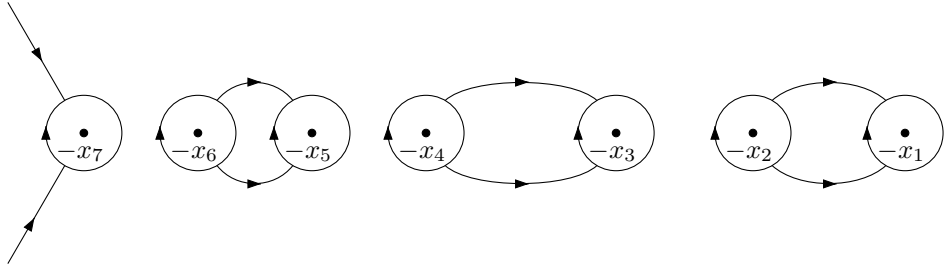
\begin{figure}
\centering
\begin{tikzpicture}

\draw[fill] (0,0) circle (0.05);
\draw[fill] (1.5,0) circle (0.05);
\draw[fill] (3,0) circle (0.05);
\draw[fill] (4.5,0) circle (0.05);
\draw[fill] (7,0) circle (0.05);
\draw[fill] (8.8,0) circle (0.05);
\draw[fill] (10.8,0) circle (0.05);
\draw (-0.25,0.43) -- (120:2.0);
\draw (-0.25,-0.43) -- (-120:2.0);

\draw (0,0) circle (0.5);
\draw (1.5,0) circle (0.5);
\draw (3,0) circle (0.5);
\draw (4.5,0) circle (0.5);
\draw (7,0) circle (0.5);
\draw (8.8,0) circle (0.5);
\draw (10.8,0) circle (0.5);

\draw (1.75,0.43) .. controls (2,0.75) and (2.5,0.75) .. (2.75,0.43);
\draw (1.75,-0.43) .. controls (2,-0.75) and (2.5,-0.75) .. (2.75,-0.43);

\draw (4.75,0.43) .. controls (5.1,0.75) and (6.4,0.75) .. (6.75,0.43);
\draw (4.75,-0.43) .. controls (5.1,-0.75) and (6.4,-0.75) .. (6.75,-0.43);

\draw (9.05,0.43) .. controls (9.4,0.75) and (10.2,0.75) .. (10.55,0.43);
\draw (9.05,-0.43) .. controls (9.4,-0.75) and (10.2,-0.75) .. (10.55,-0.43);

\node at (-0.05,-0.25) {$-x_7$};
\node at (1.45,-0.25) {$-x_6$};
\node at (2.95,-0.25) {$-x_5$};
\node at (4.45,-0.25) {$-x_4$};
\node at (6.95,-0.25) {$-x_3$};
\node at (8.75,-0.25) {$-x_2$};
\node at (10.75,-0.25) {$-x_1$};

\draw[black,arrows={-Triangle[length=0.18cm,width=0.12cm]}]
(-120:1.3) --  ++(60:0.001);
\draw[black,arrows={-Triangle[length=0.18cm,width=0.12cm]}]
(120:1.1) --  ++(-60:0.001);
\draw[black,arrows={-Triangle[length=0.18cm,width=0.12cm]}]
(2.33,0.675) --  ++(0:0.001);
\draw[black,arrows={-Triangle[length=0.18cm,width=0.12cm]}]
(2.33,-0.675) --  ++(0:0.001);
\draw[black,arrows={-Triangle[length=0.18cm,width=0.12cm]}]
(5.85,0.675) --  ++(0:0.001);
\draw[black,arrows={-Triangle[length=0.18cm,width=0.12cm]}]
(5.85,-0.675) --  ++(0:0.001);
\draw[black,arrows={-Triangle[length=0.18cm,width=0.12cm]}]
(9.9,0.675) --  ++(0:0.001);
\draw[black,arrows={-Triangle[length=0.18cm,width=0.12cm]}]
(9.9,-0.675) --  ++(0:0.001);

\draw[black,arrows={-Triangle[length=0.18cm,width=0.12cm]}]
(-0.5,0.09) --  ++(90:0.001);
\draw[black,arrows={-Triangle[length=0.18cm,width=0.12cm]}]
(1,0.09) --  ++(90:0.001);
\draw[black,arrows={-Triangle[length=0.18cm,width=0.12cm]}]
(2.5,0.09) --  ++(90:0.001);
\draw[black,arrows={-Triangle[length=0.18cm,width=0.12cm]}]
(4.0,0.09) --  ++(90:0.001);
\draw[black,arrows={-Triangle[length=0.18cm,width=0.12cm]}]
(6.5,0.09) --  ++(90:0.001);
\draw[black,arrows={-Triangle[length=0.18cm,width=0.12cm]}]
(8.3,0.09) --  ++(90:0.001);
\draw[black,arrows={-Triangle[length=0.18cm,width=0.12cm]}]
(10.3,0.09) --  ++(90:0.001);

\end{tikzpicture}
\caption{The contour $\Sigma_{R}$ for $R$ with $g=3$.}
\label{fig:SigmaR}
\end{figure}
Since $\mathcal{G}(\vec{u}) \neq 0$ for all $\vec{u} \in \mathbb{R}^{g}$, it follows from \eqref{tildePinf} and \eqref{Pinf} that $P^{(\infty)}(z)$ is bounded as $r \to + \infty$ uniformly for $z$ bounded away from $0,-x_{1},\ldots,-x_{2g+1}$. Hence, using Lemma \ref{g_lemma} and \eqref{P-x2j-1asymp}, \eqref{P-x2jasymp}, we obtain
\begin{equation}\label{JR expansion}
J_{R}(z) = \begin{cases}
I + \bigO(e^{-\tilde{c} |rz|^{\frac{1}{2}}}) & \mbox{as } r \to + \infty \mbox{ uniformly for } z \in \Sigma_{R}\setminus\left( \cup_{j=1}^{2g+1} \partial \mathbb{D}_{-x_{j}}\right), \\
I + \frac{J_{R}^{(1)}(z)}{\sqrt{r}} + \bigO(r^{-1}) & \mbox{as } r \to + \infty \mbox{ uniformly for } z \in \cup_{j=1}^{2g+1} \partial \mathbb{D}_{-x_{j}},
\end{cases}
\end{equation}
where $\tilde{c}>0$ is a sufficiently small constant, and $J_{R}^{(1)}(z) = \bigO(1)$ as $r \to + \infty$ uniformly for $z \in \cup_{j=1}^{2g+1} \partial \mathbb{D}_{-x_{j}}$. The matrix $J_{R}^{(1)}(z)$ has been computed in \eqref{P-x2j-1asymp}, \eqref{P-x2jasymp}, and is given by
\begin{align}
J_{R}^{(1)}(z) = \begin{cases}
\frac{P^{(\infty)}(z)e^{\mp(i\pi\alpha-i\widehat{\Omega}_{j}\sqrt{r})\frac{\sigma_3}{2}}\Phi_{\mathrm{Be,1}}e^{\pm(i\pi\alpha-i\widehat{\Omega}_{j}\sqrt{r})\frac{\sigma_3}{2}}P^{(\infty)}(z)^{-1}}{f_{2j+1}(z)^{\frac{1}{2}}}, & z  \in \partial \mathbb{D}_{-x_{2j+1}}, j=0,\dots,g, \\[0.2cm]
\frac{P^{(\infty)}(z)e^{\mp(i\pi\alpha-i\widehat{\Omega}_{j}\sqrt{r})\frac{\sigma_3}{2}}\sigma_3\Phi_{\mathrm{Be,1}}\sigma_3e^{\pm(i\pi\alpha-i\widehat{\Omega}_{j}\sqrt{r})\frac{\sigma_3}{2}}P^{(\infty)}(z)^{-1}}{(-f_{2j}(z))^{\frac{1}{2}}}, & z  \in \partial \mathbb{D}_{-x_{2j}}, j=1,\dots,g.
\end{cases} \label{Jrp1p}
\end{align}
By the small-norm theory for RH problems \cite{DKMVZ2,DeiftZhou}, it follows that $R$ exists for sufficiently large $r$ and
\begin{equation}\label{large r asymptotics for R}
R(z)=I+\frac{R^{(1)}(z)}{\sqrt{r}}+\bigO(r^{-1}) \qquad \mbox{as } r \to + \infty,
\end{equation}
uniformly for $z\in\mathbb{C}\setminus\Sigma_{R}$.  Using \eqref{JR expansion} and the relation
\begin{equation}\label{integral formula for R}
R(z)=I+\frac{1}{2\pi i}\int_{\Sigma_{R}}\frac{R_-(\xi)\left(J_{R}(\xi)-I\right)}{\xi-z}~d\xi, \qquad z \in \mathbb{C}\setminus \Sigma_{R},
\end{equation}
we infer  that
\begin{equation} \label{expression for Rp1p as integral}
R^{(1)}(z)=\frac{1}{2\pi i}\int_{\bigcup_{j=1}^{2g+1}\partial\mathbb{D}_{-x_j}}\frac{J_R^{(1)}(\xi)}{\xi-z}~d\xi, \qquad z \in \mathbb{C}\setminus \bigcup_{j=1}^{2g+1}\partial\mathbb{D}_{-x_j}.
\end{equation}
It is easy to see from \eqref{Jrp1p} that $J_{R}^{(1)}$ admits an analytic continuation from $\cup_{j=1}^{2g+1}\partial\mathbb{D}_{-x_j}$ to $\cup_{j=1}^{2g+1}\overline{\mathbb{D}}_{-x_j}\setminus \{-x_{j}\}$, and furthermore 
\begin{align*}
J_{R}^{(1)}(z) &= (J_{R}^{(1)})_{-x_{j}}^{(-1)} (z+x_j)^{-1} + \bigO(1) \qquad \mbox{as } z \to -x_{j}, \; j=1,\dots,2g+1.
\end{align*}
Recalling that $\partial \mathbb{D}_{-x_{j}}$ is oriented in the clockwise direction, a simple residue calculation shows that, for $z$ outside the disks, \eqref{expression for Rp1p as integral} can be rewritten as
\begin{align}
& R^{(1)}(z)=\sum_{j=1}^{2g+1}\frac{1}{z+x_j} (J_{R}^{(1)})_{-x_j}^{(-1)}, \qquad \mbox{for } z\in\mathbb{C}\setminus\bigcup_{j=1}^{2g+1}\mathbb{D}_{-x_j},
\end{align}
and in particular, we have
\begin{align}\label{R(1)}
    R^{(1)}(z)=\frac{1}{z}\sum_{j=1}^{2g+1}(J_{R}^{(1)})_{-x_j}^{(-1)}+\bigO(z^{-2}) \qquad \mbox{as } z \to \infty.
\end{align} 
Let $R_{1} = \lim_{z \to \infty}z(R(z)-I)$. From \eqref{R(1)}, we infer that
\begin{align}\label{R1 expansion}
R_1(r)=\frac{R_1^{(\frac{1}{2})}}{\sqrt{r}}+\bigO(r^{-1}), \qquad R_1^{(\frac{1}{2})}=\sum_{j=1}^{2g+1}(J_{R}^{(1)})_{-x_j}^{(-1)}.
\end{align}
The matrices $(J_{R}^{(1)})_{-x_j}^{(-1)}$ can be easily computed using \eqref{Pinf at -x2j}, \eqref{Pinf at -x1}, \eqref{Pinf at -x2j+1}, \eqref{c2j-1}, \eqref{c2j}, \eqref{Jrp1p} and the fact that $\det P^{(\infty)}(z) \equiv 1$.  Thus, we have
\begin{align*}
& (J_{R}^{(1)})_{-x_{2j+1}}^{(-1)}=c_{2j+1}^{-\frac{1}{2}}(P^{(\infty)})_{-x_{2j+1}}^{(-\frac{1}{4})}e^{-(i\pi\alpha-i\widehat{\Omega}_{j}\sqrt{r})\frac{\sigma_3}{2}}\Phi_{\mathrm{Be,1}}e^{(i\pi\alpha-i\widehat{\Omega}_{j}\sqrt{r})\frac{\sigma_3}{2}}(P^{(\infty)}_{\text{inv}})_{-x_{2j+1}}^{(-\frac{1}{4})}, & & j=0,\ldots,g, \nonumber\\
& (J_{R}^{(1)})_{-x_{2j}}^{(-1)}=i(c_{2j})^{-\frac{1}{2}}(P^{(\infty)})_{-x_{2j}}^{(-\frac{1}{4})}e^{-(i\pi\alpha-i\widehat{\Omega}_{j}\sqrt{r})\frac{\sigma_3}{2}}\sigma_3\Phi_{\mathrm{Be,1}}\sigma_3e^{(i\pi\alpha-i\widehat{\Omega}_{j}\sqrt{r})\frac{\sigma_3}{2}}(P^{(\infty)}_{\text{inv}})_{-x_{2j}}^{(-\frac{1}{4})}, & & j=1,\dots,g, 
\end{align*}
where
\begin{align*}
(P^{(\infty)}_{\text{inv}})_{-x_{j}}^{(-\frac{1}{4})} = \begin{pmatrix}
(P^{(\infty)})_{-x_{j},22}^{(-\frac{1}{4})} & -(P^{(\infty)})_{-x_{j},12}^{(-\frac{1}{4})} \\
-(P^{(\infty)})_{-x_{j},21}^{(-\frac{1}{4})} & (P^{(\infty)})_{-x_{j},11}^{(-\frac{1}{4})}
\end{pmatrix}, \qquad j=1, \ldots,2g+1.
\end{align*}
In particular, using \eqref{intro: ThetaPeriodic}, we find
\begin{align}
\left[(J_{R}^{(1)})_{-x_{2j+1}}^{(-1)}\right]_{12}&=\frac{(\beta_{2j+1}^{(-\frac{1}{4})})^2\mathcal{G}(\vec{\varphi}_{+}^{\hspace{0.4mm} \mathbb{C}}(-x_{2j+1})+\tfrac{\vec{e}_1}{2})\mathcal{G}(-\vec{\varphi}_{+}^{\hspace{0.4mm} \mathbb{C}}(-x_{2j+1})+\tfrac{\vec{e}_1}{2})}{16i\sqrt{c_{2j+1}}\mathcal{G}(\vec{0})^2}, & & j=0,\ldots,g, \label{lol7} \\
\left[(J_{R}^{(1)})_{-x_{2j}}^{(-1)}\right]_{12}&=\frac{(\beta_{2j}^{(\frac{1}{4})})^2\tilde{\mathcal{G}}_{2j}^+\tilde{\mathcal{G}}_{2j}^-}{16\sqrt{c_{2j}}\mathcal{G}(\vec{0})^2}, & & j=1,\ldots,g. \label{lol8}
\end{align}
More precisely, the right-hand side of \eqref{lol7} for $j=1,\ldots,g$ has been obtained after some simplifications using $\theta(\vec{\varphi}_{+}^{\hspace{0.4mm} \mathbb{C}}(-x_{2j+1})+\frac{e_{1}}{2}+\vec{\nu}) = e^{-2\pi i (\nu_{1}+\cdots+\nu_{j})} \theta(-\vec{\varphi}_{+}^{\hspace{0.4mm} \mathbb{C}}(-x_{2j+1})+\frac{e_{1}}{2}+\vec{\nu})$, and the right-hand side of \eqref{lol8} has been obtained using the relation $\tilde{\mathcal{G}}_{2j}^{+} = -e^{-2\pi i (\nu_{1}+\cdots+\nu_{j})}\tilde{\mathcal{G}}_{2j}^{-}$.

We note from \eqref{lol7} and \eqref{lol8} that the expressions for $\big[(J_{R}^{(1)})_{-x_{j}}^{(-1)}\big]_{12}$ appear unrelated for odd and even $j$. In Proposition \ref{PropR112} below, we show that they can all (both for $j$ even and odd) be rewritten in an unified way.

\section{Proofs of Theorems \ref{thm: main result - DIZ analogue}-\ref{thm: main result - ergodic}}\label{section: proof of thms}
Recall from Proposition \ref{prop: diff id} that
\begin{align}\label{partialrlogF}
\partial_{r} \log F(r\vec{x}) = \tfrac{1}{2ir}\Phi_{1,12}(r) + \tfrac{1-4\alpha^{2}}{16r}. 
\end{align}

\subsection{Proof of Theorem \ref{thm: main result - DIZ analogue}}
The proof relies on equation (\ref{partialrlogF}). We first compute the large $r$ asymptotics of $\Phi_{1,12}(r)$. 

\begin{proposition}\label{PropPhi12Initial}
We have
\begin{align*}
\frac{\Phi_{1,12}(r)}{2ir}=c+\frac{P^{(\infty)}_{1,12}+R_{1,12}}{2i\sqrt{r}}=c+\frac{P^{(\infty)}_{1,12}}{2i\sqrt{r}}+\frac{R^{(\frac{1}{2})}_{1,12}}{2ir}+\bigO(r^{-\frac{3}{2}}) \qquad \mbox{as } r\to+\infty.
\end{align*}
\end{proposition}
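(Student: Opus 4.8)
The plan is to track the $(1,2)$-entry of the coefficient of $z^{-1}$ at $\infty$ through the chain of transformations $\Phi \to T \to S \to R$ of the steepest descent analysis, and then to insert the large-$r$ expansion \eqref{large r asymptotics for R} of $R$. First I would use item (c) of the RH problem for $T$, which gives $T_{1,12} = \tfrac{\Phi_{1,12}}{\sqrt{r}} - 2ic\sqrt{r}$, see \eqref{eq:Tasympinf}. Solving for $\Phi_{1,12}(r)$ and dividing by $2ir$ yields the exact identity
\begin{align*}
\frac{\Phi_{1,12}(r)}{2ir} = c + \frac{T_{1,12}(r)}{2i\sqrt{r}}.
\end{align*}

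It remains to express $T_{1,12}$ in terms of $R_{1,12}$ and $P^{(\infty)}_{1,12}$. Since the transformation \eqref{def:S} from $T$ to $S$ acts nontrivially only inside the bounded lens regions $\mathcal{L}_1,\dots,\mathcal{L}_g$, the matrices $S$ and $T$ coincide in a neighbourhood of $\infty$, so $S$ has the same $z^{-1}$-coefficient $T_1$ as appears in \eqref{eq:Sasympinf}. On the other hand, for $z$ large (outside all the disks $\mathbb{D}_{-x_j}$ and all lenses), \eqref{errorMatrix} gives $S(z) = R(z)\,P^{(\infty)}(z)$; expanding $R(z) = I + R_1 z^{-1} + \bigO(z^{-2})$ together with \eqref{eq:Pinf asympinf} and comparing with \eqref{eq:Sasympinf}, we obtain $T_1 = R_1 + P^{(\infty)}_1$, and in particular $T_{1,12}(r) = R_{1,12}(r) + P^{(\infty)}_{1,12}(r)$. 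Substituting this into the displayed identity gives the first equality of the proposition.

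For the second equality I would simply insert \eqref{R1 expansion}, namely $R_{1,12}(r) = \tfrac{R^{(\frac12)}_{1,12}}{\sqrt{r}} + \bigO(r^{-1})$, so that $\tfrac{R_{1,12}(r)}{2i\sqrt{r}} = \tfrac{R^{(\frac12)}_{1,12}}{2ir} + \bigO(r^{-3/2})$, and combine with the first equality; here one also uses that $P^{(\infty)}_{1,12}$ is bounded uniformly in $r$ (which follows from \eqref{Pinf112} and the positivity of $\theta$ on $\mathbb{R}^{g}$ noted after \eqref{mathcalGdef}, together with $\vec\nu(r)\in\mathbb{R}^{g}$) and the uniformity of the error in \eqref{large r asymptotics for R}, both provided by the small-norm analysis of Section \ref{section:smallnorm}. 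I do not expect any real obstacle: the substantive work has already been done in setting up the RH transformations and the small-norm estimates, and the proof is essentially bookkeeping of subleading coefficients. The one point deserving care is the additivity $T_1 = R_1 + P^{(\infty)}_1$ under the product $S = R\,P^{(\infty)}$, which holds at order $z^{-1}$ precisely because $R$ is normalized to $I$ at $\infty$, so that no conjugation by $z^{-\frac{\sigma_{3}}{4}}M$ enters at that order.
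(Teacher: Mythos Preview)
Your proposal is correct and follows essentially the same approach as the paper: unwinding the chain $\Phi\to T\to S\to R\,P^{(\infty)}$ for large $z$ and reading off the $z^{-1}$ coefficient, then inserting \eqref{R1 expansion}. The paper simply compresses your step-by-step bookkeeping into the single identity $\Phi(z)=r^{-\sigma_3/4}\begin{pmatrix}1&0\\2ic\sqrt{r}&1\end{pmatrix}R(z)P^{(\infty)}(z)e^{\sqrt{r}\gg(z)\sigma_3}$ for large $z$ and expands once, whereas you route through the already-recorded relation $T_{1,12}=\Phi_{1,12}/\sqrt{r}-2ic\sqrt{r}$ and the additivity $T_1=R_1+P_1^{(\infty)}$.
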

\begin{proof}
From \eqref{def of T}, \eqref{def:S} and \eqref{errorMatrix}, we have, for all sufficiently large $z$,
\begin{align}
\Phi(z)=r^{-\frac{\sigma_3}{4}}\begin{pmatrix} 1 & 0 \\ 2ic\sqrt{r} & 1 \end{pmatrix}R(z)P^{(\infty)}(z)e^{\sqrt{r}\gg(z)\sigma_3}. \nonumber
\end{align}
Taking $z\to\infty$
and using \eqref{def of Phi1}, \eqref{asymp g 2cuts}, \eqref{eq:Pinf asympinf} and \eqref{R1 expansion}, we obtain the claim.
\end{proof}

Our next goal is to rewrite $\frac{P^{(\infty)}_{1,12}}{2i\sqrt{r}}$ as an $r$-derivative. To prepare ourselves for that matter, we first prove the following lemma.

\begin{lemma}\label{pOmegalemma}
We have $(\mathbb{A}^{-1})_{gj} = \frac{\Omega_j}{4\pi}$, $j=1,\ldots,g$.
\end{lemma}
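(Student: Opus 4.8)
The plan is to compute both sides of the claimed identity $(\mathbb{A}^{-1})_{gj} = \frac{\Omega_j}{4\pi}$ by expressing them in terms of period integrals and then matching. The key observation is that $\Omega_j$ is defined in \eqref{intro: Omegaj} as $\Omega_j = -i\oint_{B_j}\frac{q(s)ds}{\sqrt{\mathcal{R}(s)}}$, so the task reduces to understanding the $B$-periods of the meromorphic-looking differential $\frac{q(s)ds}{\sqrt{\mathcal{R}(s)}}$ (which is actually holomorphic on $X$, since $q$ has degree $g$) and relating them to the entries of $\mathbb{A}^{-1}$.

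First I would write $q(z) = \tfrac12 z^g + \sum_{j=0}^{g-1}q_j z^j$ and hence $\frac{q(z)dz}{\sqrt{\mathcal{R}(z)}} = \tfrac12\cdot\frac{z^g dz}{\sqrt{\mathcal{R}(z)}} + (\text{lower order})$. The normalized holomorphic differentials are $\vec\omega = \frac{dz}{\sqrt{\mathcal{R}(z)}}(1\ z\ \cdots\ z^{g-1})\mathbb{A}^{-1}$, so $\omega_j = \sum_{k=1}^g (\mathbb{A}^{-1})_{kj} z^{k-1}\frac{dz}{\sqrt{\mathcal{R}(z)}}$; note that $\omega_j$ only involves powers $z^0,\dots,z^{g-1}$, whereas $\frac{q(z)dz}{\sqrt{\mathcal{R}(z)}}$ has a leading $z^g$ term with coefficient $\tfrac12$. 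The cleanest route is to identify $\frac{q(z)dz}{\sqrt{\mathcal{R}(z)}}$ as (a constant multiple of) the normalized differential of the second kind with a double pole at infinity — but a simpler bookkeeping approach is: since $\frac{q(s)ds}{\sqrt{\mathcal{R}(s)}}$ has vanishing $A$-periods by \eqref{prop of omega and q in intro}, and $\mathbb{A}^{-1}$ is precisely the matrix that normalizes $A$-periods of the $z^{k-1}dz/\sqrt{\mathcal{R}}$ to $\delta_{jk}$, I can expand everything in the basis $\{z^{k-1}dz/\sqrt{\mathcal{R}}\}_{k=1}^{g+1}$ and track the coefficient of $z^g$. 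Concretely, from $\gg'(z) = \frac{q(z)}{\sqrt{\mathcal{R}(z)}}$ and the expansion \eqref{asymp g 2cuts}, the leading asymptotic $\gg(z)\sim\sqrt z$ pins down the $z^g$ coefficient.

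The main step is then a residue/normalization argument at $z=\infty$. Writing $P=z^{+}$ as the local coordinate near $\infty$ on the first sheet (with $1/\sqrt z$ a good local parameter), I would use the fact that $\oint_{B_j}$ of a normalized differential of the second kind with a specified singular part at $\infty$ is computed by the reciprocity law (bilinear relations) in terms of the expansion coefficients of the $A$-normalized holomorphic differentials $\omega_j$ at $\infty$. Specifically, if $\omega_j = \big(c_j^{(0)} + c_j^{(1)}/z + \cdots\big)\frac{dz}{z\sqrt z}\cdot(\text{const})$ near $\infty$, then $\oint_{B_j}\frac{q\,ds}{\sqrt{\mathcal{R}}}$ is a multiple of the appropriate coefficient — and that coefficient is exactly $(\mathbb{A}^{-1})_{gj}$, since $(\mathbb{A}^{-1})_{gj}$ is the coefficient of $z^{g-1}$ in the numerator polynomial of $\omega_j$, which controls the subleading behavior of $\omega_j$ at infinity. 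Tracking the explicit constant $\frac{1}{4\pi}$ comes from the $-i$, the factor $\tfrac12$ in $q$, and a $2\pi i$ from the reciprocity law / residue.

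The hard part will be getting the constant exactly right — keeping straight the orientation of $B_j$, the choice of sheet (so that $\sqrt{\mathcal{R}(z^+)}>0$ on $(-x_1,+\infty)$), the two local parameters at the two points over $\infty$, and the factor of $\tfrac12$ hidden in $q(z) = \tfrac12 z^g + \cdots$ together with $\gg(z)\sim\sqrt z$ rather than $\sim 2\sqrt z$. I expect the underlying structural identity (that $(\mathbb{A}^{-1})_{gj}$ is proportional to $\oint_{B_j}\frac{q\,ds}{\sqrt{\mathcal R}}$) to be essentially forced once the right basis is chosen, so the substance of the proof is a careful normalization computation rather than anything conceptually deep; an alternative, possibly cleaner, route is to differentiate the relation $\gg'(z) = \frac{q(z)}{\sqrt{\mathcal R(z)}}$, use that $\oint_{B_j}\gg'(s)ds$ is computed from \eqref{jump3 g 2cuts}, and directly expand $\frac{q(z)}{\sqrt{\mathcal R(z)}}$ in the normalized basis to read off the $B_j$-period as a linear combination whose relevant coefficient is $(\mathbb{A}^{-1})_{gj}$.
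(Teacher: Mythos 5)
Your plan follows essentially the same route as the paper: it identifies $q(z)\,dz/\sqrt{\mathcal{R}(z)}$ as the $A$-normalized differential of the second kind with a double pole at $\infty$ (note it is \emph{not} holomorphic on $X$, contrary to your first parenthetical --- your later description of it as a second-kind differential is the correct one), and computes its $B_j$-period via the Riemann bilinear/reciprocity identity in terms of the expansion coefficient of $\omega_j$ in the local coordinate $u=z^{-1/2}$ at infinity, which is exactly $(\mathbb{A}^{-1})_{gj}$. The paper carries out precisely this normalization computation (with $\tilde{\eta}=-q\,dz/\sqrt{\mathcal{R}}$ having singular part $du/u^{2}$ and $2\pi i\,\mathrm{Res}_{u=0}(f_j\tilde{\eta})=-4\pi i(\mathbb{A}^{-1})_{gj}$), so what remains in your proposal is only the constant-tracking you yourself flag as the delicate step.
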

\begin{proof}
Let us define
\begin{align}\label{wj differentials}
w_r := \frac{z^{r-1}}{\sqrt{\mathcal{R}(z)}}dz, \quad r=1,\ldots,g+1, \qquad \mbox{so that} \qquad \omega_r = \sum_{k=1}^g w_k (\mathbb{A}^{-1})_{kr}, \qquad r = 1, \dots, g.
\end{align}
Using the definition \eqref{intro: Omegaj} of $\Omega_{j}$, and noting that $a_{r,\ell} = \int_{A_r} w_{\ell}$, we get
\begin{align}
\frac{\Omega_j}{4\pi} = \frac{1}{4\pi i} \int_{B_j}\frac{q(s)}{\sqrt{\mathcal{R}(s)}}ds  & = \frac{1}{8\pi i} \int_{B_j} w_{g+1} - \frac{1}{8\pi i}  \sum_{k=1}^g\sum_{r=1}^g \bigg(\int_{B_j} w_k \bigg) (\mathbb{A}^{-1})_{kr} a_{r,g+1} \nonumber \\
& = \frac{1}{8\pi i}\bigg( \int_{B_j} w_{g+1} - \sum_{r=1}^g \int_{B_j} \omega_r \int_{A_r} w_{g+1} \bigg). \label{Omegadifference1}
\end{align}
The lemma will follow from (\ref{Omegadifference1}) if we can show that
\begin{align}\label{intBjAr}
\int_{B_j} w_{g+1} - \sum_{r=1}^g \int_{A_r} w_{g+1} \int_{B_j} \omega_r 
= 8\pi i (\mathbb{A}^{-1})_{gj}.
\end{align}
To prove (\ref{intBjAr}), first note that the differential $w_{g+1}$ is holomorphic except at $z = \infty$. As $z \to \infty$, we have $\sqrt{\mathcal{R}(z)} = \sqrt{z}(z^{g} + O(z^{g-1}))$, and hence $w_{g+1} = \frac{dz}{\sqrt{z}}(1+ O(z^{-1}))$. In terms of the local analytic coordinate $u := z^{-\frac{1}{2}}$ at $z = \infty$, this becomes
\begin{align*}
w_{g+1} = -2\frac{du}{u^2} + O(1) \qquad \mbox{as } u \to 0.
\end{align*}
It follows that the differential $\tilde{\eta}$, defined by
\begin{align}\label{tildethetadef}
\tilde{\eta} = -\frac{1}{2}\bigg[w_{g+1} - \sum_{r=1}^g \bigg(\int_{A_r} w_{g+1}\bigg) \omega_r\bigg],
\end{align}
is an Abelian differential of the second kind with vanishing $A$-periods whose only pole is a double pole at $z = \infty$ (i.e. at $u = 0$); moreover, at this pole $\tilde{\eta}$ has the singular part $du/u^2$. 
Applying Riemann's bilinear identity to $\tilde{\eta}$ and a holomorphic differential $\eta$, we find (see e.g. \cite[p. 64, eq. (3.0.2)]{FK1992})
\begin{align}\label{bilinearidentity}
\sum_{l=1}^g \bigg(\int_{A_l} \eta \int_{B_l} \tilde{\eta}- \int_{B_l} \eta \int_{A_l} \tilde{\eta}\bigg)
= 2\pi i \underset{u = 0}{\res} (f\tilde{\eta}),
\end{align}
where $f$ is a function such that $df =\eta$ near $u = 0$. 
Let us fix $j \in \{1, \dots, g\}$ and choose $\eta = \omega_j$. Then (\ref{bilinearidentity}) reduces to
\begin{align}\label{intBjtildetheta}
\int_{B_j} \tilde{\eta} = 2\pi i \underset{u = 0}{\res} (f_j\tilde{\eta}),
\end{align}
where $df_j = \omega_j$ near $u = 0$. Using the definition \eqref{omegaJ} of $\omega_{j}$, we obtain
\begin{align*}
\omega_j =z^{-3/2} (\mathbb{A}^{-1})_{gj} (1+ O(z^{-1}))dz 
= -2 (\mathbb{A}^{-1})_{gj} (1+ O(u^2))du \qquad \mbox{as } u \to 0,
\end{align*}
which implies $f_j(u) = f_j(0) - 2 (\mathbb{A}^{-1})_{gj} u+ O(u^2)$ as $u \to 0$,
and thus
\begin{align}\label{resftildetheta}
2\pi i \underset{u = 0}{\res} (f_j\tilde{\eta}) = - 4\pi i (\mathbb{A}^{-1})_{gj}.
\end{align}
The identity (\ref{intBjAr}) follows by substituting (\ref{tildethetadef}) and (\ref{resftildetheta}) into (\ref{intBjtildetheta}). This completes the proof.
\end{proof}

\begin{proposition}\label{prop: P112 derivative}
We have the identity
\begin{align*}
\frac{P^{(\infty)}_{1,12}}{2i\sqrt{r}} = - \frac{d_1}{2\sqrt{r}} -\frac{1}{\sqrt{r}} \sum_{j=1}^{g} (\mathbb{A}^{-1})_{gj}\frac{\boldsymbol{\partial}_{j}\mathcal{G}(\vec{0})}{\mathcal{G}(\vec{0})} =\frac{d}{dr}\left[\log\theta(\vec\nu)-d_1\sqrt{r}\right].
\end{align*}
\end{proposition}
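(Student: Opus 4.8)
The plan is to prove the two equalities separately, each by a short direct computation, with Lemma~\ref{pOmegalemma} (which is why it was proved first) supplying the only nontrivial input. For the first equality I would simply insert the formula \eqref{Pinf112}, namely $P^{(\infty)}_{1,12}=\frac{-2i}{\mathcal{G}(\vec 0)}\sum_{j=1}^{g}(\mathbb{A}^{-1})_{gj}\boldsymbol\partial_{j}\mathcal{G}(\vec 0)-id_{1}$, and divide through by $2i\sqrt{r}$; this produces the middle expression at once.

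For the second equality I would first simplify the ratio $\boldsymbol\partial_{j}\mathcal{G}(\vec 0)/\mathcal{G}(\vec 0)$. Writing $\mathcal{G}(\vec z)=\theta(\vec z+\vec\nu)/\theta(\vec z)$ from \eqref{mathcalGdef} and differentiating, the contribution of $\boldsymbol\partial_{j}\theta(\vec 0)$ drops out because $\theta$ is even (by \eqref{intro: ThetaPeriodic}, $\theta(-\vec z)=\theta(\vec z)$, so $\nabla\theta(\vec 0)=\vec 0$), and one is left with $\boldsymbol\partial_{j}\mathcal{G}(\vec 0)/\mathcal{G}(\vec 0)=\boldsymbol\partial_{j}\theta(\vec\nu)/\theta(\vec\nu)$. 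Hence the middle expression in the proposition equals $-\frac{d_{1}}{2\sqrt{r}}-\frac{1}{\sqrt{r}}\sum_{j=1}^{g}(\mathbb{A}^{-1})_{gj}\frac{\boldsymbol\partial_{j}\theta(\vec\nu)}{\theta(\vec\nu)}$.

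On the other side, I would apply the chain rule to $\frac{d}{dr}\log\theta(\vec\nu(r))$. Since $\nu_{j}(r)=-\tilde\alpha_{j}-\Omega_{j}\sqrt{r}/(2\pi)$ by \eqref{intro: nu}, we have $\frac{d\nu_{j}}{dr}=-\frac{\Omega_{j}}{4\pi\sqrt{r}}$, so $\frac{d}{dr}\log\theta(\vec\nu)=-\frac{1}{\sqrt{r}}\sum_{j=1}^{g}\frac{\Omega_{j}}{4\pi}\frac{\boldsymbol\partial_{j}\theta(\vec\nu)}{\theta(\vec\nu)}$; combined with $\frac{d}{dr}(-d_{1}\sqrt{r})=-\frac{d_{1}}{2\sqrt{r}}$, the desired identity reduces precisely to $(\mathbb{A}^{-1})_{gj}=\Omega_{j}/(4\pi)$. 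This is exactly the statement of Lemma~\ref{pOmegalemma}, which completes the argument. There is no genuine obstacle at this stage: all the substance (the Riemann bilinear identity computation) is already contained in Lemma~\ref{pOmegalemma}, and the rest is the evenness of $\theta$ together with bookkeeping.
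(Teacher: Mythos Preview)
Your proposal is correct and follows essentially the same approach as the paper: the first equality is immediate from \eqref{Pinf112}, and the second equality comes from the evenness of $\theta$ (giving $\boldsymbol{\partial}_j\mathcal{G}(\vec 0)/\mathcal{G}(\vec 0)=\boldsymbol{\partial}_j\theta(\vec\nu)/\theta(\vec\nu)$) together with the chain rule and Lemma~\ref{pOmegalemma}. The only cosmetic difference is that the paper first rewrites $\frac{d\nu_j}{dr}=-\frac{(\mathbb{A}^{-1})_{gj}}{\sqrt{r}}$ via Lemma~\ref{pOmegalemma} and then applies the chain rule, whereas you compute $\frac{d\nu_j}{dr}=-\frac{\Omega_j}{4\pi\sqrt r}$ directly from \eqref{intro: nu} and invoke the lemma at the end; the content is identical.
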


\begin{proof}
The first equality follows from \eqref{Pinf112}. Note that $\mathcal{G}(\vec{0})$ depends on $r$, whereas $d_1$ and $(\mathbb{A}^{-1})_{gj}$ do not.
 Let $\vec{z}=(z_{1},\ldots,z_{g})\in\mathbb{C}^g$. Using the definition \eqref{mathcalGdef} of $\mathcal{G}$, we get
\begin{align*}
\frac{\boldsymbol{\partial}_{j}\mathcal{G}(\vec{z})}{\mathcal{G}(\vec{z})}=\frac{\boldsymbol{\partial}_{j}\theta(\vec{z}+\vec\nu)}{\theta(\vec{z}+\vec\nu)}-\frac{\boldsymbol{\partial}_{j}\theta(\vec{z})}{\theta(\vec{z})}, \qquad j=1,\ldots,g.
\end{align*}
Since $\theta(\vec{z})=\theta(-\vec{z})$ for all $\vec{z}\in\mathbb{C}^g$, we have $\boldsymbol{\partial}_{j}\theta(\vec{0})=0$ for all $j=1,\dots,g$, and thus
\begin{align*}
\frac{\boldsymbol{\partial}_{j}\mathcal{G}(\vec{0})}{\mathcal{G}(\vec{0})} = \frac{\boldsymbol{\partial}_{j}\theta(\vec\nu)}{\theta(\vec\nu)}, \qquad j=1,\ldots,g.
\end{align*}
Lemma \ref{pOmegalemma} gives the relation $\frac{d\nu_j}{dr}=-\frac{(\mathbb{A}^{-1})_{gj}}{\sqrt{r}}$, so we conclude that
\begin{align*}
-\frac{1}{\sqrt{r}} \sum_{j=1}^{g} (\mathbb{A}^{-1})_{gj}\frac{\boldsymbol{\partial}_{j}\mathcal{G}(\vec{0})}{\mathcal{G}(\vec{0})} &= \sum_{j=1}^{g} \frac{d  \nu_{j}}{dr} \frac{\boldsymbol{\partial}_{j}\theta(\vec\nu)}{\theta(\vec\nu)} = \frac{d}{dr}\log\theta(\vec\nu).
\end{align*}
\end{proof}

Finding a simplified expression for $R_{1,12}^{(\frac{1}{2})}$ is more challenging and requires some preparation. A divisor on $X$ is a formal product of points of the form
\begin{align*}
P_{1}^{\alpha_{1}}\cdots P_{k}^{\alpha_{k}}, \qquad \mbox{ where } \quad P_{1},\ldots,P_{k} \in X, \quad \alpha_{1},\ldots,\alpha_{k} \in \mathbb{Z}.
\end{align*}
Let $X_g$ denote the $n$-fold symmetric product of the Riemann surface $X$, that is, $X_{g}$ is the set of all divisors of the form $P_{1} \cdots P_{g}$ where $P_{1},\ldots,P_{g} \in X$. It is known \cite[section III.11.9]{FK1992} that $X_{g}$ can be given a complex structure, so that $X_{g}$ is a manifold of complex dimension $g$. The Abel map $\vec{\varphi}$ was defined in \eqref{def of Abel map} as a map from $X$ to $J(X)$. Following \cite[p.  \hspace{-1mm}92]{FK1992}, we extend the map $\vec{\varphi}$ to $X_{g}$ as follows:
\begin{align}\label{def of Abel map over Xg}
\vec{\varphi}: X_{g} \to J(X), \qquad D=P_{1} \cdots P_{g} \mapsto \vec{\varphi}(D)= \sum_{j=1}^{g}\int_{-x_1}^{P_{j}} \vec{\omega}^{t}.
\end{align}

Let $\uppi:X\to\mathbb{C}$ denote the projection of $X$ onto $\mathbb{C}$, and let $\mathcal{A}$ be the subset of $X_{g}$ consisting of all divisors $D=P_1\cdots P_g$ such that $P_j\in \uppi^{-1}([-x_{2j+1},-x_{2j}])$, $j=1,\dots,g$. One can equip $\mathcal{A}$ with an atlas of smooth local charts inherited from the atlas of analytic local charts of $X_{g}$, so that $\mathcal{A}$ is a smooth submanifold of $X_g$ of real dimension $g$. 

We recall from \cite[p. 110 (a)]{FK1992} that a divisor $P_1\cdots P_g$ is special if and only if there exists a non-zero holomorphic differential $\tilde{\omega}$ which vanishes at each of the points $P_{1},\ldots,P_{g}$. 

\begin{lemma}\label{lemma: nonspecial}
Each divisor in $\mathcal{A}$ is non-special.
\end{lemma}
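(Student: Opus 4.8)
The plan is to use the characterization of special divisors via holomorphic differentials together with the explicit description of holomorphic differentials on $X$ as $\frac{p(z)}{\sqrt{\mathcal{R}(z)}}\,dz$ with $\deg p \leq g-1$, and to exploit the fact that the points $P_1,\ldots,P_g$ of a divisor in $\mathcal{A}$ have projections lying in $g$ \emph{disjoint} intervals. Suppose for contradiction that $D = P_1\cdots P_g \in \mathcal{A}$ is special, so there is a non-zero holomorphic differential $\tilde\omega$ vanishing at each $P_j$. Write $\tilde\omega = \frac{p(z)}{\sqrt{\mathcal{R}(z)}}\,dz$ where $p$ is a polynomial of degree at most $g-1$; since $\tilde\omega$ is not identically zero, $p$ is not the zero polynomial, and in particular $p$ has at most $g-1$ zeros counted with multiplicity.

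The key step is to argue that $\tilde\omega$ vanishing at $P_j$ forces $p$ to vanish at the projection $b_j := \uppi(P_j)$, for each $j = 1,\ldots,g$. One must be slightly careful at the branch points: if $b_j = -x_{2j}$ or $b_j = -x_{2j+1}$, then $z = b_j$ is a branch point of $X$, the local coordinate there is $\sqrt{z - b_j}$, and $\frac{dz}{\sqrt{\mathcal{R}(z)}}$ actually has a finite nonzero value at the branch point in that coordinate; so vanishing of $\tilde\omega$ at $P_j$ still forces $p(b_j) = 0$. If $b_j$ lies in the open interval $(-x_{2j+1},-x_{2j})$, then $P_j$ is an ordinary point and vanishing of $\tilde\omega$ again gives $p(b_j) = 0$. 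Either way, $p$ vanishes at each $b_j$. Now here is where disjointness enters: the intervals $[-x_{2j+1},-x_{2j}]$, $j = 1,\ldots,g$, are pairwise disjoint (since $x_1 < x_2 < \cdots < x_{2g+1}$), so the points $b_1,\ldots,b_g$ are $g$ \emph{distinct} real numbers. Hence $p$ has at least $g$ distinct zeros, contradicting $\deg p \leq g-1$ and $p \not\equiv 0$. Therefore no divisor in $\mathcal{A}$ is special.

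The main obstacle — really the only subtlety — is the behavior at the branch points $-x_{2j}$ and $-x_{2j+1}$: one needs to check that a holomorphic differential on $X$ cannot vanish at a branch point \emph{merely because} $\frac{1}{\sqrt{\mathcal{R}(z)}}$ looks like it blows up there, i.e. one must pass to the correct local coordinate and verify that the condition ``$\tilde\omega$ vanishes at $P_j$'' genuinely translates to ``$p$ vanishes at $\uppi(P_j)$'' also in the branched case. This is a standard local computation: near a branch point $b$, setting $t = \sqrt{z-b}$ one has $z - b = t^2$, $dz = 2t\,dt$, and $\sqrt{\mathcal{R}(z)} = t \cdot \sqrt{\mathcal{R}'(b)/2 + O(t^2)}\,(1 + O(t^2))$ (up to sign), so $\frac{dz}{\sqrt{\mathcal{R}(z)}} = \frac{2t\,dt}{t(\text{nonzero}+O(t^2))}$ is holomorphic and nonvanishing at $t = 0$; hence $\tilde\omega = \frac{p(z)}{\sqrt{\mathcal{R}(z)}}dz$ vanishes at $t = 0$ iff $p(b) = 0$. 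Once this is in place, the counting argument closes immediately. (One could alternatively phrase the whole argument via the Riemann–Roch / reciprocity bound $i(D) = 0$ for divisors of degree $g$ in general position, but the direct differential-counting argument above is cleaner and self-contained given what has been set up.)
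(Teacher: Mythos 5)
Your proof is correct and follows essentially the same route as the paper: write an arbitrary holomorphic differential as $\tilde{p}(z)\,dz/\sqrt{\mathcal{R}(z)}$ with $\deg \tilde{p}\leq g-1$, note that $dz/\sqrt{\mathcal{R}(z)}$ vanishes only at $\infty$ (your local computation at the branch points just makes this explicit, up to an immaterial constant $\mathcal{R}'(b)/2$ that should be $\mathcal{R}'(b)$), and conclude from the disjointness of the intervals that a nonzero polynomial of degree at most $g-1$ would have to vanish at $g$ distinct points. Nothing is missing.
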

\begin{proof}
Since $\omega_{1},\ldots,\omega_{g}$ form a basis of the space of holomorphic differentials, it follows that any holomorphic differential can be written in the form $\tilde{\omega}=\tilde{p}(z)dz/\sqrt{\mathcal{R}(z)}$ where $\tilde{p}$ is a polynomial of degree at most $g-1$. If $P_1\cdots P_g \in \mathcal{A}$, then by definition of $\mathcal{A}$ we have $P_{j_{1}} \neq P_{j_{2}}$ for $1 \leq j_{1} \neq j_{2} \leq g$. Note that $dz/\sqrt{\mathcal{R}(z)}$ vanishes only at $\infty$. Hence, $\tilde{\omega}$ vanishes at $P_{j}$ if and only if $\tilde{p}$ vanishes at $P_{j}$. Since there exists no non-zero polynomial of degree $\leq g-1$ vanishing at $g$ distinct points, the proof is complete. 
\end{proof}

According to Lemma \ref{halfperiodlemma}, the $g$ points $\vec{\varphi}(-x_{3}), \vec{\varphi}(-x_{5}), \dots, \vec{\varphi}(-x_{2g+1})$ are odd half-periods in $J(X)$. Following \cite[p. 325, eq. (1.2.1)]{FK1992}, we define the vector of Riemann constants $\mathcal{K} \in J(X)$ by
\begin{align}
\mathcal{K} & := \vec{\varphi}((-x_{3})(-x_{5}) \cdots (-x_{2g+1})) \nonumber \\
& = \sum_{j=1}^g\vec{\varphi}(-x_{2j+1})=\frac{g}{2}\vec{e}_1-\frac{1}{2}\sum_{j=2}^g\vec{e}_j+\frac{1}{2}\sum_{j=1}^g(g-j+1)\vec{\tau}_j \mod L(X), \label{RiemannConstant}
\end{align}
where we have used \eqref{AbelMapEndPts} for the last equality. 

We recall the following result from \cite[p. 312]{FK1992}.

\begin{lemma}\label{Dlemma}
If $e = \vec{\varphi}(D) + \mathcal{K}$ and $D \in X_g$, then the multi-valued function $X \ni P \mapsto \theta(\vec{\varphi}(P) - e)$ vanishes identically if and only if $D$ is special. Furthermore, this function has a well-defined zero set, and if $D$ is not special, then $D$ is the divisor of zeros of $X \ni P \mapsto \theta(\vec{\varphi}(P) - e)$.
\end{lemma}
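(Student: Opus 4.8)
This is the classical \emph{Riemann vanishing theorem} (for the ``only if'' direction and for the identification of the divisor of zeros) together with the description of the locus where a translate of $\theta$ vanishes identically; it is precisely the content of \cite[p.~312]{FK1992}, so the shortest honest ``proof'' is to cite that reference. To reconstruct the argument one would proceed as follows. First, the zero set is well defined: the only ambiguity in $P\mapsto\theta(\vec\varphi(P)-e)$ comes from replacing $\vec\varphi(P)$ by $\vec\varphi(P)+\vec\lambda'+\tau\vec\lambda$ with $\vec\lambda,\vec\lambda'\in\mathbb{Z}^{g}$, which by \eqref{intro: ThetaPeriodic} only multiplies the value by a nowhere-vanishing exponential factor; hence the set of zeros on $X$ and the order of vanishing at each of them is independent of the branch, so ``divisor of zeros'' makes sense whenever the function is not $\equiv0$.

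The core is a contour-integral computation on the fundamental polygon $\widehat X$ obtained by cutting $X$ along the canonical homology basis $A_1,\dots,A_g,B_1,\dots,B_g$; on $\widehat X$ a single-valued branch of $\vec\varphi$ exists, so $F(P):=\theta(\vec\varphi(P)-e)$ is an honest holomorphic function there. Assuming $F\not\equiv0$, one evaluates $\frac1{2\pi i}\oint_{\partial\widehat X}d\log F$ by pairing the identified edges of $\partial\widehat X$: on the two copies of a given edge the branch of $\vec\varphi$ differs by the corresponding period, so the quasi-periodicity of $\theta$ together with the normalization $\oint_{A_k}\omega_j=\delta_{jk}$ from \eqref{prop of omega and q in intro} yields that $F$ has exactly $g$ zeros $P_1,\dots,P_g$ (with multiplicity); set $D'=P_1\cdots P_g\in X_g$. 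Repeating the computation with the weighted integrands $\varphi_\ell(P)\,d\log F(P)$, $\ell=1,\dots,g$, gives $\vec\varphi(D')\equiv e-\mathcal{K}$ modulo $L(X)$, hence, using $e=\vec\varphi(D)+\mathcal{K}$ and Abel's theorem, $D'\sim D$ (linear equivalence).

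It remains to close the loop with Riemann--Roch and the theta-divisor description. If $F\not\equiv0$ one shows (this is itself part of the argument) that the zero divisor $D'$ is non-special; since specialness is invariant under linear equivalence and $D'\sim D$, the divisor $D$ is non-special too, and because a non-special effective divisor of degree $g$ is the unique effective divisor in its linear equivalence class, $D=D'$. Conversely, if $D$ is special then $\ell(D)\geq2$, so for all but finitely many $P\in X$ one may write $D\sim P+D_0$ with $D_0$ effective of degree $g-1$; then $\vec\varphi(P)-e\equiv-\vec\varphi(D_0)-\mathcal{K}$, so by evenness of $\theta$ and the description $\Theta=\vec\varphi(X_{g-1})+\mathcal{K}$ of the theta divisor one gets $\theta(\vec\varphi(P)-e)=\theta(e-\vec\varphi(P))=\theta(\vec\varphi(D_0)+\mathcal{K})=0$ for all such $P$, whence $F\equiv0$. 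Combining, $F\equiv0$ if and only if $D$ is special, and when $D$ is non-special the divisor of zeros of $F$ is $D$. The main obstacle is the bookkeeping in the two contour integrals over $\partial\widehat X$ --- tracking the $\theta$-automorphy factors and the edge orientations carefully enough to extract cleanly both the count $g$ and the congruence $\vec\varphi(D')\equiv e-\mathcal{K}$ --- and the justification that $D'$ is non-special, which is exactly where the full strength of Riemann--Roch and of the theta-divisor description is needed; since all of this is \cite[p.~312]{FK1992}, in the paper one may simply invoke that reference.
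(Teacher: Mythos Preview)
Your proposal is correct and matches the paper's approach: the paper does not prove this lemma at all but simply states it as a result recalled from \cite[p.~312]{FK1992}, exactly as you suggest. Your additional sketch of the Riemann vanishing theorem argument is accurate and goes beyond what the paper provides.
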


Clearly, the set 
\begin{align*}
\mathcal{T} & := \{\mathcal{K} + \vec{x} + \tau\vec{m}: \vec{x} \in \mathbb{R}^{g}, \; \vec{m}\in\mathbb{Z}^g\}/L(X) = \mathcal{K} + \mathbb{R}^{g}/\mathbb{Z}^{g}
\end{align*}
is a nonempty connected smooth submanifold of $J(X)$ of real dimension $g$. Let $\vec{\varphi}|_\mathcal{A}$ denote the restriction of $\vec{\varphi}:X_g \to J(X)$ to $\mathcal{A}$. 

\begin{proposition}\label{varphiAprop}
The map $\vec{\varphi}|_\mathcal{A}: \mathcal{A} \to \mathcal{T}$ is a diffeomorphism, that is, $\vec{\varphi}|_\mathcal{A}: \mathcal{A} \to \mathcal{T}$ is a smooth bijection, and $(\vec{\varphi}|_{\mathcal{A}})^{-1}:\mathcal{T} \to \mathcal{A}$ is smooth.
\end{proposition}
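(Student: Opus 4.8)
The plan is to combine the non-specialness of the divisors in $\mathcal{A}$ (Lemma \ref{lemma: nonspecial}) with Abel's theorem and a compactness/connectedness argument. I will establish, in this order: (i) that $\vec{\varphi}(\mathcal{A})\subseteq\mathcal{T}$ and that $\vec\varphi|_{\mathcal A}$ is smooth; (ii) that $\vec\varphi|_{\mathcal A}$ is injective; (iii) that $\vec\varphi|_{\mathcal A}$ is a local diffeomorphism; and (iv) that (i)--(iii) together force it to be a diffeomorphism onto $\mathcal{T}$.

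Step (i) is essentially bookkeeping. Smoothness is automatic, since $\vec{\varphi}:X_g\to J(X)$ is holomorphic and $\mathcal{A}$ is a smooth submanifold of $X_g$. To see that the image lands in $\mathcal{T}$, take $D=P_1\cdots P_g\in\mathcal{A}$ with $P_j\in\uppi^{-1}([-x_{2j+1},-x_{2j}])$ and show that $\vec{\varphi}(P_j)-\vec{\varphi}(-x_{2j+1})\in\mathbb{R}^g\bmod L(X)$ for each $j$: on the first sheet this holds because the entries of $\vec\omega$ are real along $(-x_{2j+1},-x_{2j})^+$ (where $\sqrt{\mathcal R}$ is real), and for points on the second sheet one uses the hyperelliptic involution $\iota$, under which $\vec\omega$ is odd and the base point $-x_1$ is fixed, so $\vec\varphi(\iota P)\equiv-\vec\varphi(P)\bmod L(X)$; combined with \eqref{AbelMapEndPts} this again gives a real shift modulo $L(X)$. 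Summing over $j$ and using $\mathcal{K}=\sum_{j=1}^g\vec\varphi(-x_{2j+1})$ from \eqref{RiemannConstant} yields $\vec\varphi(D)\in\mathcal K+\mathbb R^g\bmod L(X)=\mathcal T$. For step (ii), suppose $D_1,D_2\in\mathcal A$ with $\vec\varphi(D_1)=\vec\varphi(D_2)=:\vec v$ and set $e=\vec v+\mathcal K$. By Lemma \ref{lemma: nonspecial} both divisors are non-special, so Lemma \ref{Dlemma} identifies each of $D_1,D_2$ with the (well-defined) divisor of zeros of the single function $P\mapsto\theta(\vec\varphi(P)-e)$; hence $D_1=D_2$. (Alternatively: otherwise Abel's theorem would produce a non-constant element of $L(D_2)$, forcing $D_2$ to be special.)

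Step (iii) is the heart of the argument. Fix $D=P_1\cdots P_g\in\mathcal A$. Since the intervals $[-x_{2j+1},-x_{2j}]$, $j=1,\dots,g$, are pairwise disjoint, the points $P_j$ are pairwise distinct, so $D$ lies in the smooth locus of $X_g$ where $X_g$ is locally biholomorphic to a product of coordinate disks about the $P_j$, and $T_DX_g\cong\bigoplus_{j=1}^g T_{P_j}X$. In such local coordinates $t_1,\dots,t_g$ the complex differential $d\vec\varphi_D:T_DX_g\to T_{\vec\varphi(D)}J(X)\cong\mathbb C^g$ is represented by the $g\times g$ matrix whose $j$-th column is the value at $P_j$ of the vector $\vec\omega$ of holomorphic one-forms; this matrix is invertible exactly when no nonzero holomorphic differential vanishes simultaneously at $P_1,\dots,P_g$, i.e. exactly when $D$ is non-special, which holds by Lemma \ref{lemma: nonspecial}. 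Hence $d\vec\varphi_D$ is a $\mathbb C$-linear isomorphism, in particular an injective $\mathbb R$-linear map; composing with the injective inclusion $T_D\mathcal A\hookrightarrow T_DX_g$ shows that $d(\vec\varphi|_{\mathcal A})_D:T_D\mathcal A\to T_{\vec\varphi(D)}\mathcal T$ is injective, and since $\dim_{\mathbb R}T_D\mathcal A=g=\dim_{\mathbb R}T_{\vec\varphi(D)}\mathcal T$ it is an isomorphism. Thus $\vec\varphi|_{\mathcal A}$ is a local diffeomorphism.

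For step (iv), a local diffeomorphism is an open map, so $\vec\varphi(\mathcal A)$ is open in $\mathcal T$; it is also compact, since $\mathcal A$ is a product of $g$ circles and hence compact, and therefore closed; being nonempty, it is all of the connected manifold $\mathcal T$. A continuous bijection from a compact space to a Hausdorff space is a homeomorphism, and a bijective local diffeomorphism is a diffeomorphism, which completes the proof. The main obstacle I expect is step (iii): one must correctly identify $T_DX_g$ with $\bigoplus_j T_{P_j}X$ (valid precisely on the diagonal-free smooth locus, which is where $\mathcal A$ sits), relate the intrinsic differential of the Abel map to the classical matrix of evaluations of $\vec\omega$ at the $P_j$, and carefully pass between this complex differential on $X_g$ and its restriction to the totally real submanifold $\mathcal A$; step (i), though elementary, also requires some care with the half-period bookkeeping modulo $L(X)$ and with the choice of sheet and orientation of the integration paths.
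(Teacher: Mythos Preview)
Your proof is correct and follows essentially the same approach as the paper: both use Lemma \ref{lemma: nonspecial} to show the tangent map has full rank (hence local diffeomorphism), and both use Lemma \ref{Dlemma} to establish injectivity via the zero divisor of $\theta(\vec\varphi(\cdot)-e)$. The only minor difference is in how surjectivity onto $\mathcal{T}$ is obtained: the paper argues that a proper local diffeomorphism between nonempty connected manifolds is a covering map and then uses that an injective covering map is a diffeomorphism, whereas you use the more elementary observation that the image of a local diffeomorphism is open, hence (being also compact) open and closed in the connected space $\mathcal{T}$; both arguments are standard and equivalent here.
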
 
\begin{proof}
Let $D=P_1\cdots P_g\in \mathcal{A}$. Since $\mathcal{K} = \vec{\varphi}((-x_{3})(-x_{5}) \cdots (-x_{2g+1}))$, we have
\begin{align*}
\vec{\varphi}(D)=\mathcal{K}+ \sum_{j=1}^g \int_{-x_{2j+1}}^{P_j}\vec{\omega} \; \in \; \mathcal{T},
\end{align*}
where we have used that the restriction of $\vec{\omega}$ to $\uppi^{-1}([-x_{2j+1},-x_{2j}])$, $j=1,\ldots,g$, takes values in $\mathbb{R}^g$. This shows that $\vec{\varphi}(\mathcal{A}) \subseteq \mathcal{T}$. 

It readily follows from its definition that the map $\vec{\varphi}|_\mathcal{A}: \mathcal{A} \to \mathcal{T}$ is smooth. We have shown in Lemma \ref{lemma: nonspecial} that all divisors in $\mathcal{A}$ are non-special. Let $D=P_{1}\cdots P_{g}$ be an arbitrary divisor in $\mathcal{A}$. Since $D$ is non-special, the tangent map of $\vec{\varphi}:X_g \to J(X)$ at $D$ has full rank (see e.g. \cite[section VI.2.5]{FK1992}); hence the tangent map of $\vec{\varphi}|_{\mathcal{A}}:\mathcal{A} \to \mathcal{T}$ at $D$ also has full rank. 
Therefore, by the inverse function theorem, $\vec{\varphi}|_\mathcal{A}:\mathcal{A} \to \mathcal{T}$ is a local diffeomorphism.

Since $\vec{\varphi}|_\mathcal{A}$ is continuous, $\mathcal{A}$ is compact, and $\mathcal{T}$ is Hausdorff, it follows that $\vec{\varphi}|_\mathcal{A}$ is a proper map \cite[Proposition A.53]{LeeSM}. 
Therefore, we have shown that $\vec\varphi|_\mathcal{A}$ is a proper local diffeomorphism. Since $\mathcal{A}$ and $\mathcal{T}$ are nonempty and connected, this implies that $\vec\varphi|_\mathcal{A}$ is a smooth covering map \cite[Proposition 4.46]{LeeSM}, that is, $\vec\varphi|_\mathcal{A} : \mathcal{A} \to \mathcal{T}$ is a smooth surjective map, and for each $\vec{t} \in \mathcal{T}$, there exists a neighborhood $U \subseteq \mathcal{T}$ of $\vec{t}$ such that each component of $(\vec\varphi|_\mathcal{A})^{-1}(U)$ is mapped diffeomorphically onto $U$ by $\vec\varphi|_\mathcal{A}$. 



Let $D_{1},D_{2} \in \mathcal{A}$. Lemma \ref{lemma: nonspecial} implies that $D_{1}$ and $D_{2}$ are non-special. Hence, Lemma \ref{Dlemma} implies that $D_{j}$ is the divisor of zeros of $P \mapsto \theta(\vec{\varphi}(P)-\vec{\varphi}(D_{j})-\mathcal{K})$, $j=1,2$. In particular, if $\vec\varphi(D_{1})=\vec\varphi(D_{2})$, then necessarily $D_{1}=D_{2}$. This shows that $\vec{\varphi}|_{\mathcal{A}}$ is injective.
Since an injective smooth covering map is a diffeomorphism, see \cite[Proposition 4.33 (b)]{LeeSM}, the proof is complete.
\end{proof}

Let $\vec{\mu} \in \R^g/ \mathbb{Z}^{g}$.  In view of Proposition \ref{varphiAprop}, there exist $\tilde{D}=\tilde{P}_1\cdots\tilde{P}_g \in \mathcal{A}$ and $\hat{D}=\hat{P}_1\cdots\hat{P}_g \in \mathcal{A}$ such that
\begin{align}\label{zero divisor}
\tilde{D} = (\vec{\varphi}|_\mathcal{A})^{-1}(- \vec{\mu} - \tfrac{\vec{e}_1}{2} - \mathcal{K}), \qquad \hat{D} = (\vec{\varphi}|_\mathcal{A})^{-1}(\vec{\mu} + \tfrac{\vec{e}_1}{2} - \mathcal{K}),
\end{align}
or equivalently
\begin{align}\label{tildeDhatD}
\vec{\mu} + \frac{\vec{e}_1}{2}  = -\vec{\varphi}(\tilde{D}) - \mathcal{K} \mod L(X), \qquad  -\vec{\mu} - \frac{\vec{e}_1}{2}  = -\vec{\varphi}(\hat{D}) - \mathcal{K} \mod L(X).
\end{align}
It follows from Lemma \ref{lemma: nonspecial} that $\tilde{D}$ and $\hat{D}$ are non-special, and from Lemma \ref{Dlemma} that 
\begin{align*}
P \mapsto \theta(\vec{\varphi}(P) + \tfrac{\vec{e}_1}{2} + \vec{\mu}) = \theta(\vec{\varphi}(P) -\vec{\varphi}(\tilde{D})-\mathcal{K}) \quad \mbox{and} \quad P \mapsto \theta(-\vec{\varphi}(P) + \tfrac{\vec{e}_1}{2} + \vec{\mu}) = \theta(\vec{\varphi}(P) -\vec{\varphi}(\hat{D})-\mathcal{K})
\end{align*} 
do not vanish identically and that their zero divisors are given by $\tilde{D}$ and $\hat{D}$, respectively. Let $\iota:X \to X$ be the sheet-changing involution. Since $\vec{\varphi}(P) = -\vec{\varphi}(\iota(P))$ and $\mathcal{K} = -\mathcal{K} \mod L(X)$, it follows from \eqref{tildeDhatD} that $\tilde{P}_k = \iota(\hat{P}_k)$ (and in particular $\uppi(\tilde{P}_k)=\uppi(\hat{P}_k)$) for $k=1,\dots,g$. Let us define
\begin{align}\label{bk}
b_k(\vec{\mu}):=\uppi(\tilde{P}_k)=\uppi(\hat{P}_k)\in[-x_{2k+1},-x_{2k}], ~~~ k=1,\dots,g.
\end{align}

\begin{proposition}\label{ThetaIdentity}
For each $P \in X$ and $\vec{\mu}\in \mathbb{R}^{g}/\mathbb{Z}^{g}$, we have
\begin{align}\label{lol6}
\frac{\theta^2(\vec{0})}{\theta^2(\vec{\mu})}\frac{\theta(\vec{\varphi}(P)+\frac{\vec{e}_1}{2}+\vec\mu)\theta(-\vec{\varphi}(P)+\frac{\vec{e}_1}{2}+\vec\mu)}{\theta(\vec{\varphi}(P)+\frac{\vec{e}_1}{2})\theta(-\vec{\varphi}(P)+\frac{\vec{e}_1}{2})}=\prod_{k=1}^g\frac{z-b_k(\vec\mu)}{z+x_{2k}}, \qquad z=\uppi(P).
\end{align}
\end{proposition}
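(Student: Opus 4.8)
The plan is to recognize the left-hand side of \eqref{lol6} as a meromorphic function on $X$ in the variable $P$, to show it is invariant under the hyperelliptic involution $\iota$ so that it descends to a rational function of $z=\uppi(P)$ on $\mathbb{CP}^1$, to determine that rational function from its zeros and poles, and finally to pin down the remaining multiplicative constant by evaluating at $P=\infty^+$.

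Write $f(P)$ for the left-hand side of \eqref{lol6}, where in each factor $\theta(\pm\vec\varphi(P)+\cdots)$ we have chosen a lift of $\vec\varphi(P)$ to $\mathbb{C}^g$. First I would check that $f$ is a genuine (single-valued) meromorphic function on $X$: by the quasi-periodicity \eqref{intro: ThetaPeriodic}, changing the lift $\vec\varphi(P)\mapsto\vec\varphi(P)+\vec n+\tau\vec m$ with $\vec n,\vec m\in\mathbb{Z}^g$ multiplies $\theta(\vec\varphi(P)+\tfrac{\vec e_1}{2}+\vec\mu)\,\theta(-\vec\varphi(P)+\tfrac{\vec e_1}{2}+\vec\mu)$ and $\theta(\vec\varphi(P)+\tfrac{\vec e_1}{2})\,\theta(-\vec\varphi(P)+\tfrac{\vec e_1}{2})$ by the \emph{same} factor $e^{-i\pi(4\vec m^{t}\vec\varphi(P)+2\vec m^{t}\tau\vec m)}$ — the dependence on $\tfrac{\vec e_1}{2}$ and on $\vec\mu$ cancels between the two theta factors in each pair — so the ratio, hence $f$, is independent of the lift. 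Local holomorphy of $\vec\varphi^{\hspace{0.4mm}\mathbb{C}}$ together with the non-vanishing statements obtained from Lemma \ref{lemma: nonspecial} and Lemma \ref{Dlemma} (applied to the non-special divisors $\tilde D,\hat D$ and to $D_0:=(-x_2)(-x_4)\cdots(-x_{2g})$) then show that $f$ is meromorphic and not identically $0$ or $\infty$. Moreover, since $\vec\varphi(\iota(P))=-\vec\varphi(P)$ and $\theta$ is even, interchanging $\vec\varphi(P)\leftrightarrow-\vec\varphi(P)$ leaves $f$ unchanged, so $f=f\circ\iota$ and therefore $f=\tilde f\circ\uppi$ for a unique rational function $\tilde f$ on $\mathbb{CP}^1$.

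Next I would compute $\mathrm{div}(f)$ on $X$ as (zero divisor of the numerator) minus (zero divisor of the denominator). By Lemma \ref{Dlemma} and the discussion preceding Proposition \ref{ThetaIdentity}, the zero divisor of $P\mapsto\theta(\vec\varphi(P)+\tfrac{\vec e_1}{2}+\vec\mu)$ is $\tilde D=\tilde P_1\cdots\tilde P_g$ and that of $P\mapsto\theta(-\vec\varphi(P)+\tfrac{\vec e_1}{2}+\vec\mu)$ is $\hat D=\iota(\tilde D)$, where $\uppi(\tilde P_k)=b_k(\vec\mu)$. For the denominator one uses $\vec\varphi(D_0)=\mathcal{K}-\tfrac{\vec e_1}{2}$ (a short computation from \eqref{AbelMapEndPts} and \eqref{RiemannConstant}) and $2\mathcal{K}\in L(X)$ (also immediate from \eqref{RiemannConstant}), so that $\tfrac{\vec e_1}{2}\equiv-(\vec\varphi(D_0)+\mathcal{K})\bmod L(X)$; since $D_0\in\mathcal{A}$ is non-special, Lemma \ref{Dlemma} gives that $P\mapsto\theta(\vec\varphi(P)+\tfrac{\vec e_1}{2})$ has zero divisor $D_0$, and likewise $P\mapsto\theta(-\vec\varphi(P)+\tfrac{\vec e_1}{2})$ has zero divisor $\iota(D_0)=D_0$ (the points $-x_{2k}$ are branch points, fixed by $\iota$). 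Hence
\[
\mathrm{div}(f)=\sum_{k=1}^{g}\big(\tilde P_k+\iota(\tilde P_k)\big)-2\sum_{k=1}^{g}(-x_{2k}).
\]
On the other hand, the function $\rho(z):=\prod_{k=1}^{g}\frac{z-b_k(\vec\mu)}{z+x_{2k}}$ pulled back to $X$ has exactly this divisor: the zero of $z-b_k(\vec\mu)$ pulls back to $\tilde P_k+\iota(\tilde P_k)$ (two simple zeros, or a double zero if $b_k(\vec\mu)$ is a branch point), and $z+x_{2k}$ pulls back to a double zero at the branch point $-x_{2k}$; here one uses that $b_1(\vec\mu)<\cdots<b_g(\vec\mu)$ lie in the pairwise disjoint intervals $[-x_{2k+1},-x_{2k}]$, so the only coincidence among the listed points that can occur is $b_k(\vec\mu)=-x_{2k}$, in which case the matching zero and pole cancel on both sides. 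Therefore $f/(\rho\circ\uppi)$ is a meromorphic function on $X$ with no zeros and no poles, i.e.\ a nonzero constant $c$.

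Finally, evaluating at $P=\infty^{+}$, where $\vec\varphi^{\hspace{0.4mm}\mathbb{C}}(\infty)=\tfrac{\vec e_1}{2}$ by Proposition \ref{AbelMapProperties}, and using $\theta(\vec z+\vec e_1)=\theta(\vec z)$, one finds $f(\infty^{+})=\tfrac{\theta^2(\vec 0)}{\theta^2(\vec\mu)}\cdot\tfrac{\theta(\vec\mu)^2}{\theta(\vec 0)^2}=1$, while $\rho(z)\to1$ as $z\to\infty$; hence $c=1$, which is precisely \eqref{lol6}. The step requiring the most care is the divisor bookkeeping just described — keeping track of the factor $2$ coming from the ramification of $\uppi$ at the branch points $-x_j$, and the (harmless) possibility that some $b_k(\vec\mu)$ equals the branch point $-x_{2k}$. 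If one prefers, this can be avoided by first proving the identity for the dense set of $\vec\mu$ with all $b_k(\vec\mu)$ in the open intervals $(-x_{2k+1},-x_{2k})$ — where all zeros are simple and lie off the branch locus — and then extending by continuity in $\vec\mu$, using that $b_k(\vec\mu)$ depends continuously on $\vec\mu$ (Proposition \ref{varphiAprop}) and that $\theta(\vec\mu)\neq 0$.
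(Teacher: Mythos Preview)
Your proof is correct and follows essentially the same route as the paper: both show the left-hand side is a single-valued $\iota$-invariant meromorphic function on $X$, identify the zeros of the numerator via the divisors $\tilde D,\hat D$ and the zeros of the denominator as $D_0=(-x_2)\cdots(-x_{2g})$ using Lemma \ref{Dlemma}, conclude that the ratio with the right-hand side is constant, and fix the constant by evaluating at $\infty$ using $\vec\varphi(\infty)=\tfrac{\vec e_1}{2}$. Your treatment is slightly more explicit about the ramification bookkeeping at branch points, but the argument is the same.
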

\begin{proof}
Let $h(P)$ denote the left-hand side of \eqref{lol6}. Using \eqref{intro: ThetaPeriodic}, it is easily verified that the function $\hat{h}$ defined by
\begin{align*}
\vec{u} \mapsto \hat{h}(\vec{u}):= \frac{\theta^2(\vec{0})}{\theta^2(\vec{\mu})}\frac{\theta(\vec{u}+\frac{\vec{e}_1}{2}+\vec\mu)\theta(-\vec{u}+\frac{\vec{e}_1}{2}+\vec\mu)}{\theta(\vec{u}+\frac{\vec{e}_1}{2})\theta(-\vec{u}+\frac{\vec{e}_1}{2})}, \qquad \vec{u} \in \mathbb{C}^{g},
\end{align*}
satisfies
\begin{align*}
\hat{h}(\vec{u}) = \hat{h}(\vec{u}+\vec{\lambda}'+\tau\vec{\lambda}), \qquad  \vec{\lambda},\vec{\lambda}'\in\mathbb{Z}^g.
\end{align*}
This shows that $h(P) = \hat{h}(\varphi(P))$ is a well-defined function of $P \in X$. From the discussion above \eqref{bk}, we conclude that $h$ has exactly $2g$ zeros (counting multiplicities) at $\tilde{P}_{1},\ldots,\tilde{P}_{g},\hat{P}_{1},\ldots,\hat{P}_{g}$. Also, it follows from Lemma \ref{halfperiodlemma} that $\vec{\varphi}(-x_{2k})+\frac{\vec{e}_1}{2}$ are odd half-periods, and therefore the function $\theta(\vec{\varphi}(P)+\frac{\vec{e}_1}{2}) =\theta(-\vec{\varphi}(P)+\frac{\vec{e}_1}{2})$ has $g$ simple zeros at $-x_{2},-x_{4},\ldots,-x_{2g}$. In fact, $\theta(\vec{\varphi}(P)+\frac{\vec{e}_1}{2})$ has no other zeros, because from \eqref{AbelMapEndPts} and \eqref{RiemannConstant}, we deduce that 
\begin{align*}
\vec{\varphi}((-x_{2})(-x_{4}) \cdots (-x_{2g})) = \mathcal{K} + \frac{\vec{e}_{1}}{2},
\end{align*}
and thus Lemma \ref{Dlemma} ensures that $(-x_{2})(-x_{4}) \cdots (-x_{2g})$ is the divisor of zeros of $\theta(\vec{\varphi}(P)+\frac{\vec{e}_1}{2})$.

Since $\vec{\varphi}(P) = -\vec{\varphi}(\iota(P))$, $h$ satisfies $h(P) = h(\iota(P))$ for all $P \in X$. In order words, $h(P)$ only depends on $\uppi(P)$ and therefore $h$ can be viewed as a meromorphic function on the Riemann sphere. It directly follows from the above discussion and from \eqref{bk} that $h$, viewed as a function on the Riemann sphere, has $g$ simple zeros at $b_{1}(\vec{\mu}),\ldots,b_{g}(\vec{\mu})$, and $g$ simple poles at $-x_{2},-x_{4},\ldots,-x_{2g}$. Hence the ratio of the left- and right-hand sides of \eqref{lol6} is equal to a constant $\kappa$. By Proposition \ref{AbelMapProperties}, $\vec{\varphi}(\infty) = \frac{\vec{e}_1}{2} \mod L(X)$. This shows that $\kappa=1$, which finishes the proof.
\end{proof}
Define the function $\mathcal{B}:\mathbb{C}\times \mathbb{R}^{g}/\mathbb{Z}^{g}\to\mathbb{C} \cup \{\infty\}$ by
\begin{align}\label{B}
\mathcal{B}(z,\vec{u}):=\frac{\prod_{k=1}^g(z-b_k(\vec{u}))}{q(z)} =\frac{\theta^2(\vec{0})}{\theta^2(\vec{u})}\frac{\theta(\vec{\varphi}(P)+\frac{\vec{e}_1}{2}+\vec{u})\theta(-\vec{\varphi}(P)+\frac{\vec{e}_1}{2}+\vec{u})}{\theta(\vec{\varphi}(P)+\frac{\vec{e}_1}{2})\theta(-\vec{\varphi}(P)+\frac{\vec{e}_1}{2})}\frac{\prod_{k=1}^g(z+x_{2k})}{q(z)},
\end{align}
where $P\in X$ is such that $z=\uppi(P)$. The second equality in \eqref{B} follows from Proposition \ref{ThetaIdentity}.

\begin{proposition}\label{PropR112}
Recall that $\vec{\nu}(r) \in \mathbb{R}^{g}$ is defined in \eqref{intro: nu}. We have
\begin{align*}
R_{1,12}^{(\frac{1}{2})}&=\frac{1}{16i}\sum_{j=1}^{2g+1}\mathcal{B}(-x_j,\vec{\nu}(r)).
\end{align*}
\end{proposition}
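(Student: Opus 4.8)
The plan is to argue entry by entry in the decomposition \eqref{R1 expansion}: it suffices to show that $[(J_{R}^{(1)})_{-x_j}^{(-1)}]_{12}=\frac{1}{16i}\mathcal{B}(-x_j,\vec{\nu}(r))$ for each $j=1,\dots,2g+1$, and then sum over $j$. Throughout I abbreviate $\vec{\nu}=\vec{\nu}(r)$. The inputs are the already-simplified explicit formulas \eqref{lol7} and \eqref{lol8} for the two families of residues, together with the $\theta$-quotient representation \eqref{B} of $\mathcal{B}$; the work is to recognize the right-hand sides of \eqref{lol7}--\eqref{lol8} as $\frac{1}{16i}\mathcal{B}(-x_j,\vec{\nu})$, which splits into a ``$\theta$-part'' that is immediate and an ``algebraic part'' (rational in the $x_l$'s) that requires a short sign-and-magnitude check.

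I would treat the odd endpoints first. For $j=2k+1$, $k=0,\dots,g$, I would rewrite the factor $\mathcal{G}(\vec{\varphi}_{+}^{\mathbb{C}}(-x_{2k+1})+\tfrac{\vec{e}_1}{2})\,\mathcal{G}(-\vec{\varphi}_{+}^{\mathbb{C}}(-x_{2k+1})+\tfrac{\vec{e}_1}{2})/\mathcal{G}(\vec{0})^{2}$ in \eqref{lol7} purely in terms of $\theta$-functions using $\mathcal{G}(\vec{z})=\theta(\vec{z}+\vec{\nu})/\theta(\vec{z})$ from \eqref{mathcalGdef}; this produces the prefactor $\theta^2(\vec{0})/\theta^2(\vec{\nu})$ and leaves exactly the function $\hat{h}$ from the proof of Proposition \ref{ThetaIdentity}, evaluated at $\vec{\varphi}_{+}^{\mathbb{C}}(-x_{2k+1})$. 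Since $-x_{2k+1}$ is a branch point of $X$, any $P$ over it satisfies $\vec{\varphi}(P)\equiv\vec{\varphi}_{\pm}^{\mathbb{C}}(-x_{2k+1})\pmod{L(X)}$, and the $L(X)$-periodicity of $\hat{h}$ lets me identify this value with the $\theta$-quotient in \eqref{B} at $z=-x_{2k+1}$; hence \eqref{B} gives $\hat{h}(\vec{\varphi}_{+}^{\mathbb{C}}(-x_{2k+1}))=\mathcal{B}(-x_{2k+1},\vec{\nu})\,q(-x_{2k+1})/\prod_{l=1}^{g}(x_{2l}-x_{2k+1})$. Substituting this into \eqref{lol7}, the claim for odd $j$ reduces to the elementary identity $(\beta_{2k+1}^{(-\frac{1}{4})})^{2}\,q(-x_{2k+1})=\sqrt{c_{2k+1}}\,\prod_{l=1}^{g}(x_{2l}-x_{2k+1})$, which I would verify by inserting \eqref{c2j-1} and \eqref{beta2j-1} and comparing absolute values, the sign being pinned down by the sign table \eqref{sgn q} of $q$ at the branch points.

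For the even endpoints $j=2k$, $k=1,\dots,g$, the $\theta$-quotient in \eqref{B} is no longer regular: $\theta(\pm\vec{\varphi}_{+}^{\mathbb{C}}(-x_{2k})+\tfrac{\vec{e}_1}{2})=0$ because $\vec{\varphi}(-x_{2k})+\tfrac{\vec{e}_1}{2}$ is an odd half-period (Lemma \ref{halfperiodlemma}), and the polynomial factor $\prod_{l=1}^{g}(z+x_{2l})$ also vanishes at $z=-x_{2k}$. I would therefore evaluate $\mathcal{B}(-x_{2k},\vec{\nu})$ by a limiting procedure $z\to-x_{2k}$, $\im z>0$: writing $\prod_{l=1}^{g}(z+x_{2l})=(z+x_{2k})\prod_{l\neq k}(z+x_{2l})$ and assigning one factor $\sqrt{z+x_{2k}}$ to each of the two $\theta$-quotients, the regularized limits are precisely $\tilde{\mathcal{G}}_{2k}^{\pm}$ as defined above, so that $\mathcal{B}(-x_{2k},\vec{\nu})=\frac{\theta^2(\vec{0})}{\theta^2(\vec{\nu})}\,\tilde{\mathcal{G}}_{2k}^{+}\tilde{\mathcal{G}}_{2k}^{-}\,\prod_{l\neq k}(x_{2l}-x_{2k})/q(-x_{2k})$. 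Comparing this with \eqref{lol8}, and using $\mathcal{G}(\vec{0})^{-2}=\theta^2(\vec{0})/\theta^2(\vec{\nu})$, reduces the claim for even $j$ to the elementary identity $i\,(\beta_{2k}^{(\frac{1}{4})})^{2}\,q(-x_{2k})=\sqrt{c_{2k}}\,\prod_{l\neq k}(x_{2l}-x_{2k})$, which I would check from \eqref{c2j}, \eqref{beta2j} and \eqref{sgn q}; the extra factor of $i$ is accounted for by the $e^{-i\pi/4}$ appearing in $\beta_{2k}^{(\frac{1}{4})}$.

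The only genuinely delicate part I anticipate is these two elementary prefactor identities, where one must keep careful track of the branch choices in $\sqrt{c_j}$, in the quarter powers inside $\beta_j^{(\pm\frac{1}{4})}$, and in the boundary values $\sqrt{x_l-x_j}_{+}$. The substance of the check is that, once the sign of $q(-x_j)$ (from \eqref{sgn q}) and the positivity of $c_j$ (from \eqref{c2j-1}, \eqref{c2j}) are taken into account, all the resulting ratios collapse to $1$ (resp.\ to $1/i$ in the even case, absorbed by the $e^{-i\pi/4}$). This is purely a bookkeeping matter; once it is settled, summing \eqref{R1 expansion} over $j$ yields the claimed formula for $R_{1,12}^{(\frac{1}{2})}$.
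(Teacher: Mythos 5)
Your proposal is correct and follows essentially the same route as the paper: the paper's proof is precisely the ``direct computation'' you spell out, namely matching $\big[(J_{R}^{(1)})_{-x_j}^{(-1)}\big]_{12}$ from \eqref{lol7}--\eqref{lol8} with $\frac{1}{16i}\mathcal{B}(-x_j,\vec{\nu}(r))$ via the right-most ($\theta$-quotient) expression in \eqref{B}, the definition \eqref{mathcalGdef} of $\mathcal{G}$, and the prefactor identities coming from \eqref{beta2j}, \eqref{beta2j-1}, \eqref{c2j-1}, \eqref{c2j}, then summing over $j$ using \eqref{R1 expansion}. Your two elementary sign-and-magnitude identities (including the regularized limit at the even endpoints and the factor $e^{-i\pi/4}$ in $\beta_{2j}^{(1/4)}$) check out, so the argument is complete.
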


\begin{proof}
After a direct computation using \eqref{beta2j}, \eqref{beta2j-1}, \eqref{c2j-1}, \eqref{c2j}, \eqref{lol7}, \eqref{lol8}, the definition of $\mathcal{G}$ \eqref{mathcalGdef}, and the right-most expression for $\mathcal{B}$ in \eqref{B}, we obtain
\begin{align*}
\frac{1}{16i}\mathcal{B}(-x_{j},\vec{\nu}(r))&=\left[(J_{R}^{(1)})_{-x_j}^{(-1)}\right]_{12}, \qquad j=1,2,\ldots,2g+1.
\end{align*}
By \eqref{R1 expansion}, $R_{1,12}^{(\frac{1}{2})}=\sum_{j=1}^{2g+1}\big[(J_{R}^{(1)})_{-x_j}^{(-1)}\big]_{12}$, and thus the claim follows.
\end{proof}

We are now ready to compute the large $r$ asymptotics of $\frac{\Phi_{1,12}(r)}{2ir}$.

\begin{proposition}\label{prop:Phi1 as a derivative}
As $r\to+\infty$, we have
\begin{align}\label{Phi112 simplified}
\frac{\Phi_{1,12}(r)}{2ir}=\frac{d}{dr}\left[cr-d_1\sqrt{r}+\log\theta(\vec\nu)- \frac{1}{32} \int_{M}^{r}\sum_{j=1}^{2g+1}\mathcal{B}(-x_j,\vec{\nu}(t))\frac{dt}{t}\right] +\bigO(r^{-\frac{3}{2}}),
\end{align}
where $M>0$ is independent of $r$.
\end{proposition}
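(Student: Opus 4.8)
The plan is to obtain \eqref{Phi112 simplified} by assembling the three propositions already proved in this section and then recognizing the resulting expression as a total $r$-derivative. First I would invoke Proposition \ref{PropPhi12Initial}, which already writes
$\frac{\Phi_{1,12}(r)}{2ir}=c+\frac{P^{(\infty)}_{1,12}}{2i\sqrt{r}}+\frac{R^{(\frac{1}{2})}_{1,12}}{2ir}+\bigO(r^{-\frac{3}{2}})$ as $r\to+\infty$. Into this I would substitute Proposition \ref{prop: P112 derivative}, which identifies $\frac{P^{(\infty)}_{1,12}}{2i\sqrt{r}}$ with $\frac{d}{dr}\big[\log\theta(\vec\nu)-d_1\sqrt{r}\big]$, and Proposition \ref{PropR112}, which gives $R^{(\frac{1}{2})}_{1,12}=\frac{1}{16i}\sum_{j=1}^{2g+1}\mathcal{B}(-x_j,\vec\nu(r))$. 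Using $\frac{1}{2ir}\cdot\frac{1}{16i}=-\frac{1}{32r}$, these substitutions turn the formula into $\frac{\Phi_{1,12}(r)}{2ir}=c+\frac{d}{dr}\big[\log\theta(\vec\nu)-d_1\sqrt{r}\big]-\frac{1}{32r}\sum_{j=1}^{2g+1}\mathcal{B}(-x_j,\vec\nu(r))+\bigO(r^{-\frac{3}{2}})$.

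Next I would rewrite the remaining two explicit terms as $r$-derivatives: trivially $c=\frac{d}{dr}(cr)$, and, by the fundamental theorem of calculus, $-\frac{1}{32r}\sum_{j=1}^{2g+1}\mathcal{B}(-x_j,\vec\nu(r))=\frac{d}{dr}\big[-\frac{1}{32}\int_M^r\sum_{j=1}^{2g+1}\mathcal{B}(-x_j,\vec\nu(t))\frac{dt}{t}\big]$ for any fixed $M>0$. To justify this last step I must check that the integrand $t\mapsto \frac{1}{t}\sum_{j=1}^{2g+1}\mathcal{B}(-x_j,\vec\nu(t))$ is continuous on $[M,+\infty)$: finiteness of each $\mathcal{B}(-x_j,\vec\nu(t))=\prod_{k=1}^{g}(-x_j-b_k(\vec\nu(t)))/q(-x_j)$ holds because $q$ does not vanish at the points $-x_j$ (its zeros lie in the open intervals $(-x_{2k+1},-x_{2k})$, as established in Section \ref{subsection: g-function}), and continuity in $t$ follows from the continuous — indeed smooth — dependence of $b_1,\dots,b_g$ on their argument given by Proposition \ref{varphiAprop}, together with the continuity of $r\mapsto\vec\nu(r)$. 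Summing the three derivative expressions then yields exactly \eqref{Phi112 simplified}, and since changing $M$ only shifts the antiderivative by an $r$-independent constant, the displayed identity is independent of the choice of $M$.

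I do not expect a genuine obstacle here: this proposition is essentially a bookkeeping assembly, all the substantive work having already been done — in particular the Abel-map diffeomorphism of Proposition \ref{varphiAprop}, the period computation of Lemma \ref{pOmegalemma}, and the $\theta$-function identity of Proposition \ref{ThetaIdentity} underlying Proposition \ref{PropR112}. The only point requiring a sentence of care is the continuity and finiteness of the integrand, which is what makes the fundamental theorem of calculus applicable.
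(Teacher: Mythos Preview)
Your proposal is correct and follows essentially the same route as the paper: invoke Proposition~\ref{PropPhi12Initial}, substitute Propositions~\ref{prop: P112 derivative} and~\ref{PropR112}, and observe that the result is the $r$-derivative appearing in \eqref{Phi112 simplified}. Your extra sentence justifying continuity of the integrand (via $q(-x_j)\neq 0$ and the smoothness from Proposition~\ref{varphiAprop}) is a welcome clarification that the paper leaves implicit.
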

\begin{proof}
It follows from Propositions \ref{PropPhi12Initial}, \ref{prop: P112 derivative} and \ref{PropR112} that
\begin{align*}
\frac{\Phi_{1,12}(r)}{2ir} & = c+\frac{P^{(\infty)}_{1,12}}{2i\sqrt{r}}+\frac{R^{(\frac{1}{2})}_{1,12}}{2ir} +\bigO(r^{-\frac{3}{2}})  = c+ \frac{d}{dr}\left[\log\theta(\vec\nu)-d_1\sqrt{r}\right] - \frac{1}{32r}\sum_{j=1}^{2g+1}\mathcal{B}(-x_j,\vec{\nu}(r))  +\bigO(r^{-\frac{3}{2}})
\end{align*}
as $r \to + \infty$, which is equivalent to (\ref{Phi112 simplified}).
\end{proof}

We obtain \eqref{asymptotics in main thm} after substituting the asymptotics \eqref{Phi112 simplified} into (\ref{partialrlogF}), integrating in $r$, and then exponentiating both sides. To finish the proof of Theorem \ref{thm: main result - DIZ analogue}, it remains to prove \eqref{leading term is a space average}. 
\begin{proposition}\label{ergodic: extract log r}
Let $M > 0$ and let $\mathcal{H}:\mathbb{R}^{g}/ \mathbb{Z}^{g}\to\mathbb{R}$ be continuous. Then $\widehat{\mathcal{H}} \in \mathbb{R}$ is well-defined by
$$\widehat{\mathcal{H}} = \lim_{\mathrm{T} \to +\infty} \frac{1}{\mathrm{T}} \int_0^\mathrm{T} \mathcal{H}( \vec\nu(t^{2}))dt$$
and, for all $r > 0$, 
\begin{align}
\int_M^r\mathcal{H}(\vec{\nu}(r'))\frac{dr'}{r'} & = \frac{2}{\sqrt{r}} \int_{\sqrt{M}}^{\sqrt{r}}\mathcal{H}(\vec{\nu}(t^{2}))dt + 2 \int_{\sqrt{M}}^{\sqrt{r}} \frac{1}{t}\bigg( \frac{\int_{\sqrt{M}}^{t}\mathcal{H}(\vec{\nu}(t'^{2}))dt'}{t}-\widehat{\mathcal{H}} \bigg)dt + \widehat{\mathcal{H}}\log \frac{r}{M}.
\label{exact expression for integral main thm} 
\end{align}
In particular, 
\begin{align}
\int_M^r\mathcal{H}(\vec{\nu}(r'))\frac{dr'}{r'} = \widehat{\mathcal{H}} \log r +o(\log r) \qquad \mbox{as } r \to + \infty. \label{Birkhoff modified with 1/r}
\end{align}
\end{proposition}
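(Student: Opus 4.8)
The plan is to reduce the logarithmically-weighted integral $\int_M^r \mathcal{H}(\vec\nu(r'))\,\frac{dr'}{r'}$ to an ordinary (Lebesgue-weighted) average by the substitution $r' = t^2$, and then to extract the leading logarithmic behaviour by an integration-by-parts trick applied to the running average of $\mathcal{H}(\vec\nu(t^2))$. First I would note that $\widehat{\mathcal{H}}$ is well-defined: the map $r \mapsto \vec\nu(r^2) = -\vec{\tilde\alpha} - \vec\Omega \frac{r}{2\pi}$ is a linear flow on $\mathbb{R}^g/\mathbb{Z}^g$, and by the classical equidistribution theorem for linear flows on the torus (Weyl), the time average of the continuous function $\mathcal{H}$ along this flow converges; if $\Omega_1,\dots,\Omega_g$ are rationally independent the limit equals $\int_{\mathbb{R}^g/\mathbb{Z}^g}\mathcal{H}$, but in general it is simply the average over the closure of the orbit, and all we need here is that the limit exists and is finite (boundedness of $\mathcal{H}$ on the compact torus gives the finiteness). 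Actually, even without equidistribution, one can first observe that $\mathcal{H}$ is bounded, and the existence of $\widehat{\mathcal{H}}$ follows from unique ergodicity of the closure of the orbit (or from a direct Fourier-series estimate handling the resonant and non-resonant Fourier modes of $\mathcal{H}$ separately).

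Next I would carry out the change of variables. Setting $r' = t^2$, $\frac{dr'}{r'} = \frac{2\,dt}{t}$, so
\begin{align*}
\int_M^r\mathcal{H}(\vec\nu(r'))\frac{dr'}{r'} = 2\int_{\sqrt M}^{\sqrt r}\mathcal{H}(\vec\nu(t^2))\frac{dt}{t}.
\end{align*}
Now introduce the running integral $\mathcal{I}(t) := \int_{\sqrt M}^t \mathcal{H}(\vec\nu(s^2))\,ds$, so that $\mathcal{I}'(t) = \mathcal{H}(\vec\nu(t^2))$ and $\mathcal{I}(t)/t \to \widehat{\mathcal{H}}$ as $t\to+\infty$ by definition of $\widehat{\mathcal{H}}$. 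Integrating by parts,
\begin{align*}
2\int_{\sqrt M}^{\sqrt r}\frac{\mathcal{I}'(t)}{t}\,dt = \frac{2\mathcal{I}(\sqrt r)}{\sqrt r} - \frac{2\mathcal{I}(\sqrt M)}{\sqrt M} + 2\int_{\sqrt M}^{\sqrt r}\frac{\mathcal{I}(t)}{t^2}\,dt,
\end{align*}
and since $\mathcal{I}(\sqrt M) = 0$ the boundary term at $\sqrt M$ vanishes. Writing $\frac{\mathcal{I}(t)}{t^2} = \frac{1}{t}\left(\frac{\mathcal{I}(t)}{t} - \widehat{\mathcal{H}}\right) + \frac{\widehat{\mathcal{H}}}{t}$ and integrating the last term, $2\int_{\sqrt M}^{\sqrt r}\frac{\widehat{\mathcal{H}}}{t}\,dt = \widehat{\mathcal{H}}\log\frac{r}{M}$, I obtain precisely \eqref{exact expression for integral main thm}.

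Finally, for \eqref{Birkhoff modified with 1/r} I would estimate each of the first two terms on the right-hand side of \eqref{exact expression for integral main thm} as $o(\log r)$. The first term $\frac{2}{\sqrt r}\int_{\sqrt M}^{\sqrt r}\mathcal{H}(\vec\nu(t^2))\,dt$ is $\frac{2\mathcal{I}(\sqrt r)}{\sqrt r}$, which converges to $2\widehat{\mathcal{H}}$, hence is $O(1) = o(\log r)$. For the second term, write $\phi(t) := \frac{\mathcal{I}(t)}{t} - \widehat{\mathcal{H}}$; by definition $\phi(t)\to 0$ as $t\to\infty$, and $\phi$ is bounded on $[\sqrt M,\infty)$. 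Given $\varepsilon > 0$ pick $T_\varepsilon$ with $|\phi(t)| \le \varepsilon$ for $t \ge T_\varepsilon$; then $\left|2\int_{\sqrt M}^{\sqrt r}\frac{\phi(t)}{t}\,dt\right| \le 2\int_{\sqrt M}^{T_\varepsilon}\frac{|\phi(t)|}{t}\,dt + 2\varepsilon\log\frac{\sqrt r}{T_\varepsilon}$, the first piece being a fixed constant and the second being $\le \varepsilon\log r$ for large $r$; since $\varepsilon$ is arbitrary this term is $o(\log r)$. Adding up, $\int_M^r\mathcal{H}(\vec\nu(r'))\frac{dr'}{r'} = \widehat{\mathcal{H}}\log r + o(\log r)$.

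The only genuinely non-routine point is the existence of $\widehat{\mathcal{H}}$, i.e.\ the convergence of the Cesàro time average of a continuous function along the torus flow $t \mapsto \vec\nu(t^2) \bmod \mathbb{Z}^g$; everything else is elementary calculus. I would handle this by Weyl's equidistribution theorem: the closure of the orbit $\{-\vec{\tilde\alpha} - \vec\Omega\,\tfrac{t}{2\pi}\}_{t>0}$ is a closed subgroup (a sub-torus coset) of $\mathbb{R}^g/\mathbb{Z}^g$ on which the flow is uniquely ergodic, so the time average converges to the integral of $\mathcal{H}$ against the normalized Haar measure of that sub-torus; in particular the limit exists and is finite. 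Alternatively one can argue purely via Fourier series of $\mathcal{H}$, noting that for each Fourier mode $e^{2\pi i \vec m \cdot \vec u}$ the time average $\frac{1}{T}\int_0^T e^{2\pi i \vec m \cdot \vec\nu(t^2)}\,dt$ either is identically the constant mode value (when $\vec m^t\vec\Omega = 0$) or tends to $0$ at rate $O(1/T)$ (when $\vec m^t\vec\Omega \neq 0$), and uniform approximation of $\mathcal{H}$ by trigonometric polynomials on the compact torus upgrades this to the full statement.
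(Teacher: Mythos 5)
Your proposal is correct and is essentially the paper's own argument: the identity \eqref{exact expression for integral main thm} is elementary calculus (the paper verifies it by differentiating both sides; your substitution $r'=t^{2}$ followed by integration by parts with $\mathcal{I}(t)=\int_{\sqrt{M}}^{t}\mathcal{H}(\vec{\nu}(s^{2}))ds$ is equivalent), and your $\varepsilon$-splitting argument is exactly the justification of the paper's remark that the first term on the right-hand side is $\bigO(1)$ and the second is $o(\log r)$. The only difference is how the existence of $\widehat{\mathcal{H}}$ is justified: since $\vec{\nu}(t^{2})=-\vec{\tilde{\alpha}}-\frac{t}{2\pi}\vec{\Omega}$ is affine in $t$, the paper observes that $t\mapsto\mathcal{H}(\vec{\nu}(t^{2}))$ is almost periodic and cites the existence of the mean of such functions, whereas you invoke Weyl equidistribution/unique ergodicity on the orbit closure (or the direct Fourier-mode computation), which are equivalent classical routes resting on the same approximation by trigonometric polynomials.
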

\begin{proof}
Because $\mathbb{R}^{g}/ \mathbb{Z}^{g}$ is compact and $\mathcal{H}$ is continuous, $t \mapsto \mathcal{H}(\vec{\nu}(t^{2}))$ is an almost-periodic function (see \cite[Definition 5.1]{Katznelson}), and thus $\widehat{\mathcal{H}} \in \mathbb{R}$ is well-defined (see \cite[p. 176]{Katznelson}). The identity \eqref{exact expression for integral main thm} can be verified by differentiating both sides. Since the first and second terms on the right-hand side of \eqref{exact expression for integral main thm} are $\bigO(1)$ and $o(\log r)$ as $r \to + \infty$, respectively, \eqref{Birkhoff modified with 1/r} follows.
\end{proof}
Applying Proposition \ref{ergodic: extract log r} to $\mathcal{H}=\mathcal{B}(-x_{j},\cdot)$, we get
\begin{align*}
\int_M^r \frac{\mathcal{B}(-x_j,\vec{\nu}(r'))}{r'}dr' = \mathcal{B}_{j} \log r+o(\log r) \qquad \mbox{as } r \to + \infty,
\end{align*}
which is \eqref{leading term is a space average}. This finishes the proof of Theorem \ref{thm: main result - DIZ analogue}.

\subsection{Proof of Theorem \ref{thm: main result - periodic}}
In this subsection, whose content is inspired by \cite[p. 156]{DIZ}, we assume that there exist $\delta_{1},\delta_{2}>0$ such that (\ref{good diophantine prop}) holds.

\begin{proposition}\label{prop: gperiodic log int}
Let $\mathcal{H}:\mathbb{R}^{g}/ \mathbb{Z}^{g}\to\mathbb{R}$ be analytic and assume that \eqref{good diophantine prop} holds. For any $M>0$, we have
\begin{align}
& \frac{1}{t}\int_{\sqrt{M}}^{t}\mathcal{H}(\vec{\nu}(t'^{2}))dt' = \widehat{\mathcal{H}}+ \bigO(t^{-1}) \qquad \mbox{as } t \to + \infty, \label{lol9}
\end{align}
where $\widehat{\mathcal{H}} = \lim_{\mathrm{T} \to +\infty} \frac{1}{\mathrm{T}} \int_0^\mathrm{T} \mathcal{H}( \vec\nu(t^{2}))dt$.
\end{proposition}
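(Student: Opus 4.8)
The plan is to pass to the Fourier expansion of $\mathcal{H}$ on the torus and exploit the Diophantine lower bound \eqref{good diophantine prop}. Let $\vec{\tilde{\alpha}} = (\tilde{\alpha}_{1},\ldots,\tilde{\alpha}_{g})^{t}$. Recall from \eqref{intro: nu} that for $t>0$ one has $\vec{\nu}(t^{2}) = -\vec{\tilde{\alpha}} - \tfrac{t}{2\pi}\vec{\Omega}$, so that $t \mapsto \vec{\nu}(t^{2})$ is an affine linear flow on $\mathbb{R}^{g}/\mathbb{Z}^{g}$ with velocity $-\tfrac{1}{2\pi}\vec{\Omega}$. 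Since $\mathcal{H}$ is analytic on $\mathbb{R}^{g}/\mathbb{Z}^{g}$, it extends holomorphically to a complex tube neighborhood, and hence (by shifting contours in the Fourier coefficient integrals) its Fourier expansion $\mathcal{H}(\vec{u}) = \sum_{\vec{m}\in\mathbb{Z}^{g}}c_{\vec{m}}\,e^{2\pi i \vec{m}^{t}\vec{u}}$ has exponentially decaying coefficients: there exist $A,a>0$ with $|c_{\vec{m}}| \leq A e^{-a\|\vec{m}\|}$ for all $\vec{m}$. In particular the series converges absolutely and uniformly.

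Substituting the flow and interchanging summation and integration (justified by absolute convergence), I would write, for any $s \geq 0$,
\begin{align*}
\int_{s}^{t}\mathcal{H}(\vec{\nu}(t'^{2}))\,dt' = \sum_{\vec{m}\in\mathbb{Z}^{g}} c_{\vec{m}}\,e^{-2\pi i \vec{m}^{t}\vec{\tilde{\alpha}}} \int_{s}^{t} e^{-i t' \vec{m}^{t}\vec{\Omega}}\,dt',
\end{align*}
and split the sum according to whether the ``resonance'' $\vec{m}^{t}\vec{\Omega}$ vanishes. The resonant modes ($\vec{m}^{t}\vec{\Omega}=0$) contribute $(t-s)\,\mathrm{C}$, where $\mathrm{C}:=\sum_{\vec{m}:\,\vec{m}^{t}\vec{\Omega}=0} c_{\vec{m}}\,e^{-2\pi i \vec{m}^{t}\vec{\tilde{\alpha}}}$ is a finite constant (the sum is over a sublattice of $\mathbb{Z}^{g}$ and converges absolutely by the exponential bound). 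For a non-resonant mode,
\begin{align*}
\left| \int_{s}^{t} e^{-i t' \vec{m}^{t}\vec{\Omega}}\,dt' \right| = \left| \frac{e^{-i t \vec{m}^{t}\vec{\Omega}} - e^{-i s \vec{m}^{t}\vec{\Omega}}}{\vec{m}^{t}\vec{\Omega}} \right| \leq \frac{2}{|\vec{m}^{t}\vec{\Omega}|} \leq \frac{2}{\delta_{1}}\|\vec{m}\|^{\delta_{2}},
\end{align*}
where the last step is exactly \eqref{good diophantine prop}. Hence the non-resonant part of the sum is bounded, uniformly in $s$ and $t$, by $\sum_{\vec{m}\neq\vec{0}} A e^{-a\|\vec{m}\|}\tfrac{2}{\delta_{1}}\|\vec{m}\|^{\delta_{2}} < \infty$, since exponential decay dominates the polynomial factor. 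Therefore $\int_{s}^{t}\mathcal{H}(\vec{\nu}(t'^{2}))\,dt' = (t-s)\,\mathrm{C} + \bigO(1)$ as $t\to+\infty$, uniformly in $s \geq 0$.

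Taking $s=0$, dividing by $t$ and letting $t\to+\infty$ identifies the time average: $\widehat{\mathcal{H}} = \mathrm{C}$. Taking $s=\sqrt{M}$ and dividing by $t$ then yields
\begin{align*}
\frac{1}{t}\int_{\sqrt{M}}^{t}\mathcal{H}(\vec{\nu}(t'^{2}))\,dt' = \frac{t-\sqrt{M}}{t}\,\widehat{\mathcal{H}} + \bigO(t^{-1}) = \widehat{\mathcal{H}} + \bigO(t^{-1}),
\end{align*}
which is \eqref{lol9}. The argument is essentially routine once one works in Fourier modes; there is no real analytic obstacle, but two points deserve care. First, one must account for the resonant modes $\vec{m}^{t}\vec{\Omega}=0$ (which are absent precisely when the flow is ergodic) and check that they reproduce exactly the constant $\widehat{\mathcal{H}}$. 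Second, the hypothesis that $\mathcal{H}$ is \emph{analytic} rather than merely smooth is used in an essential way: it is the exponential decay of the $c_{\vec{m}}$ that allows the non-resonant sum to converge despite the polynomial loss $\|\vec{m}\|^{\delta_{2}}$ inherited from the Diophantine bound.
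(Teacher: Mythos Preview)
Your proof is correct and follows essentially the same route as the paper: expand $\mathcal{H}$ in its Fourier series on the torus, split the sum over $\vec{m}\in\mathbb{Z}^g$ according to whether $\vec{m}^t\vec{\Omega}=0$, integrate the non-resonant modes explicitly and bound them using the Diophantine condition \eqref{good diophantine prop} together with the exponential decay of the Fourier coefficients. Your presentation is slightly more explicit than the paper's in that you identify the resonant constant $\mathrm{C}$ with $\widehat{\mathcal{H}}$ directly by taking $s=0$, whereas the paper invokes the already-established $o(1)$ version from Proposition~\ref{ergodic: extract log r}; but this is a minor cosmetic difference, not a different argument.
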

\begin{proof}
We already know from Proposition \ref{ergodic: extract log r} that $\frac{1}{t}\int_{\sqrt{M}}^{t}\mathcal{H}(\vec{\nu}(t'^{2}))dt' = \widehat{\mathcal{H}}+ o(1)$ as $t \to + \infty$, so it only remains to show that the error term can be reduced from $o(1)$ to $\bigO(t^{-1})$. Since $\mathcal{H}$ is analytic on $\mathbb{R}^{g}/ \mathbb{Z}^{g}$, its Fourier series
\begin{align*}
\sum_{\vec{m} \in \mathbb{Z}^{g}} \ell_{\vec{m}}e^{2\pi i \vec{m}^{t}\vec{u}}
\end{align*}
converges to $\mathcal{H}(\vec{u})$ uniformly for $\vec{u} \in \mathbb{R}^{g}/\mathbb{Z}^{g}$, and the Fourier coefficients $\ell_{\vec{m}}$ decay exponentially fast, i.e., there exist $c_{1},c_{2}>0$ such that
\begin{align}\label{exponential decays of fourier coeff}
|\ell_{\vec{m}}| \leq c_{1} e^{-c_{2} \| \vec{m} \|} \qquad \mbox{for all } \vec{m} \in \mathbb{Z}^{g}.
\end{align}
Hence, 
\begin{align*}
\frac{1}{t}\int_{\sqrt{M}}^{t}\mathcal{H}(\vec{\nu}(t'^{2}))dt' & = \frac{1}{t} \int_{\sqrt{M}}^{t} \sum_{\vec{m} \in \mathbb{Z}^{g}} \ell_{\vec{m}}e^{-i \vec{m}^{t}(\vec{\Omega}t+2\pi\vec{\tilde{\alpha}})}dt = \frac{1}{t} \sum_{\vec{m} \in \mathbb{Z}^{g}} \int_{\sqrt{M}}^{t}  \ell_{\vec{m}}e^{- i \vec{m}^{t}(\vec{\Omega}t+2\pi\vec{\tilde{\alpha}})}
	 \\
& = \frac{1}{t} \sum_{\substack{\vec{m} \in \mathbb{Z}^{g} \\ \vec{m}^{t} \vec{\Omega} \neq 0}} \ell_{\vec{m}}e^{-2\pi i \vec{m}^{t}\vec{\tilde{\alpha}}} \frac{e^{- i \vec{m}^{t}\vec{\Omega}t}-e^{-i \vec{m}^{t}\vec{\Omega}\sqrt{M}}}{-i \vec{m}^{t}\vec{\Omega}} + \frac{1}{t}\sum_{\substack{\vec{m} \in \mathbb{Z}^{g} \\ \vec{m}^{t} \vec{\Omega} = 0}} \ell_{\vec{m}}e^{-2\pi i \vec{m}^{t}\vec{\tilde{\alpha}}} (t-\sqrt{M}).
\end{align*}
The claim now follows from \eqref{good diophantine prop} and \eqref{exponential decays of fourier coeff}.
\end{proof}
Applying Proposition \ref{prop: gperiodic log int} to $\mathcal{H}=\mathcal{B}(-x_{j},\cdot)$ gives
\begin{align}
& \frac{1}{t}\int_{\sqrt{M}}^{t}\mathcal{B}(-x_{j},\vec{\nu}(t'^{2}))dt' = \mathcal{B}_{j}+ \bigO(t^{-1}) \qquad \mbox{as } t \to + \infty, \label{lol11}
\end{align}
and substituting these asymptotics into \eqref{exact expression for integral main thm} (again with $\mathcal{H}=\mathcal{B}(-x_{j},\cdot)$), we get
\begin{align}
\int_M^r \mathcal{B}(-x_j,\vec{\nu}(r'))\frac{dr'}{r'} = \mathcal{B}_{j} \log r + C_{j}+ \bigO(r^{-\frac{1}{2}}) \qquad \mbox{as } r \to + \infty,
\end{align}
for some $C_{j}$ independent of $r$. This comples the proof of \eqref{good error term inside main thm dioph}, and hence also of Theorem \ref{thm: main result - periodic}.

\subsection{Proof of Theorem \ref{thm: main result - ergodic}}
In this subsection we assume that $\Omega_{1},\ldots,\Omega_{g}$ are rationally independent. This condition implies that the linear flow
\begin{align*}
(1,+\infty) \ni t \mapsto (\nu_{1}(t^{2}) \hspace{-2.4mm} \mod 1 \; , \, \nu_{2}(t^{2}) \hspace{-2.4mm} \mod 1 \; , \ldots, \, \nu_{g}(t^{2})\hspace{-2.4mm} \mod 1)
\end{align*}
is ergodic in $\mathbb{R}^{g}/ \mathbb{Z}^{g}$, where we recall that $\vec{\nu}(r)$ is given by \eqref{intro: nu}. Given a function $\mathcal{H}: \mathbb{R}^{g}/\mathbb{Z}^{g} \to \mathbb{R}$, the Birkhoff ergodic theorem states that the average of $\mathcal{H}$ along the above flow (often referred to as the \textit{time average} in the literature) is equal to the average of $\mathcal{H}$ over the torus $\mathbb{R}^{g}/ \mathbb{Z}^{g}$ (the \textit{space average}). More precisely, the time average of $\mathcal{H}$ is defined by
\begin{align}\label{time average}
\lim_{\mathrm{T} \to \infty} \frac{1}{\mathrm{T}} \int_M^\mathrm{T} \mathcal{H}( \vec\nu(t^{2}) + \vec{x})dt, \qquad \vec{x} \in \mathbb{R}^{g}, \quad M>0,
\end{align}
(this limit is clearly independent of $M$) and the space average of $\mathcal{H}$, which we denote by $\langle \mathcal{H} \rangle$, is defined by
\begin{align}
\langle \mathcal{H} \rangle = \int_{[0,1]^{g}}\mathcal{H}(\vec{x})d\vec{x} = \int_{0}^{1} \int_{0}^{1} \cdots \int_{0}^{1} \mathcal{H}(x_{1},x_{2},\ldots,x_{g})dx_{1}dx_{2}\cdots dx_{g}.
\end{align} 
Because $\Omega_{1},\ldots,\Omega_{g}$ are rationally independent, Birkhoff's ergodic theorem states that (see e.g. \cite[p. 286]{A1989})
\begin{align}\label{timeequalsspaceaverage}
\lim_{\mathrm{T} \to \infty} \frac{1}{\mathrm{T}} \int_M^\mathrm{T} \mathcal{H}( \vec\nu(t^{2}) + \vec{x})dt = \langle \mathcal{H} \rangle \qquad \mbox{for every } \vec{x} \in \mathbb{R}^{g}.
\end{align}
In particular, the time average is independent of $\vec{x}$. To complete the proof of Theorem \ref{thm: main result - ergodic}, it remains to evaluate $\langle \mathcal{B}(-x_{j},\cdot ) \rangle$ explicitly for $j=1,2,\ldots,2g+1$.

Let us orient the closed loop $\uppi^{-1}([-x_{2j+1},-x_{2j}])$ so that $\uppi^{-1}([-x_{2j+1},-x_{2j}])$ is homotopic to $A_{j}-A_{j+1}$, where $A_{1},\ldots,A_{g}$ are the cycles shown in Figure \ref{fig:homology}. 
This induces an orientation on $\mathcal{A}$ via the identification
$$\mathcal{A} \cong \uppi^{-1}([-x_{3},-x_{2}]) \times \uppi^{-1}([-x_{5},-x_{4}]) \times \cdots \times \uppi^{-1}([-x_{2g+1},-x_{2g}]).$$
Our next lemma shows that with this orientation of $\mathcal{A}$, the map $\vec{\varphi}|_{\mathcal{A}}-\mathcal{K} - \frac{\vec{e}_1}{2}$ is an orientation-preserving diffeomorphism from $\mathcal{A}$ to the subset $\mathbb{R}^{g}/\mathbb{Z}^{g}$ of the Jacobian variety $J(X)$.

\begin{lemma}\label{orientationlemma}
The map $D \mapsto f(D) := \vec{\varphi}(D) - \mathcal{K} - \frac{\vec{e}_1}{2}$ is an orientation-preserving diffeomorphism from $\mathcal{A}$ to $\mathbb{R}^{g}/\mathbb{Z}^{g}$. Moreover, the inverse of $f$ satisfies $\uppi \circ f^{-1} = \vec{b}$, where $\vec{b}:\mathbb{R}^{g}/\mathbb{Z}^{g} \to \mathbb{R}^g$ is defined by
\begin{align}\label{vecbdef}
\vec{b}(\vec{u}) = (b_{1}(\vec{u}),\ldots,b_{g}(\vec{u})), \qquad \vec{u} \in \mathbb{R}^{g}/\mathbb{Z}^{g}
\end{align}
and $\uppi:\mathcal{A} \to \mathbb{R}^g$ is defined by $\uppi(D) = (\uppi(P_1), \dots, \uppi(P_g))$ for $D = P_1 \cdots P_g \in \mathcal{A}$.
\end{lemma}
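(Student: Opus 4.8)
The plan is to verify the two claims in Lemma~\ref{orientationlemma} separately. The diffeomorphism property of $f = \vec{\varphi}|_{\mathcal{A}} - \mathcal{K} - \frac{\vec{e}_1}{2}$ is essentially a restatement of Proposition~\ref{varphiAprop}: there we showed $\vec{\varphi}|_{\mathcal{A}}:\mathcal{A}\to\mathcal{T}$ is a diffeomorphism, where $\mathcal{T} = \mathcal{K} + \mathbb{R}^g/\mathbb{Z}^g$; subtracting the constant $\mathcal{K}+\frac{\vec{e}_1}{2}$ (noting $\frac{\vec{e}_1}{2}$ lies in $\mathbb{R}^g/\mathbb{Z}^g$) translates $\mathcal{T}$ onto $\mathbb{R}^g/\mathbb{Z}^g$, so $f$ is a diffeomorphism. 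The identity $\uppi\circ f^{-1} = \vec{b}$ is then a matter of unwinding definitions: given $\vec{u}\in\mathbb{R}^g/\mathbb{Z}^g$, the divisor $D = f^{-1}(\vec{u})$ satisfies $\vec{\varphi}(D) - \mathcal{K} - \frac{\vec{e}_1}{2} = \vec{u}$, i.e. $\vec{u} + \frac{\vec{e}_1}{2} = \vec{\varphi}(D) - \mathcal{K} = -\vec{\varphi}(\iota(D)) - \mathcal{K} \bmod L(X)$ using $\mathcal{K} = -\mathcal{K}$. Comparing with \eqref{tildeDhatD}, which characterizes $\tilde{D} = \iota(D)$ via $\vec{\mu} + \frac{\vec{e}_1}{2} = -\vec{\varphi}(\tilde{D}) - \mathcal{K}$, we see $D$ coincides with the divisor $\hat D = \hat P_1\cdots\hat P_g$ from \eqref{tildeDhatD} (with $\vec\mu = \vec u$), whence $\uppi(D) = (\uppi(\hat P_1),\dots,\uppi(\hat P_g)) = (b_1(\vec u),\dots,b_g(\vec u)) = \vec{b}(\vec u)$ by \eqref{bk}.

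The substantive part is the orientation claim, which cannot be read off from Proposition~\ref{varphiAprop} since that argument was purely topological. I would argue as follows. Since $\mathcal{A}$ is connected, the diffeomorphism $f$ is either globally orientation-preserving or globally orientation-reversing, so it suffices to check the sign of the Jacobian determinant of $f$ at a single convenient point, or more cleanly, to compare the two orientation forms directly. The orientation on $\mathcal{A}$ is defined through the identification $\mathcal{A} \cong \prod_{j=1}^g \uppi^{-1}([-x_{2j+1},-x_{2j}])$, with each loop $\uppi^{-1}([-x_{2j+1},-x_{2j}])$ oriented so as to be homotopic to $A_j - A_{j+1}$ (with $A_{g+1} := 0$). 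On the target side, $\mathbb{R}^g/\mathbb{Z}^g \subset J(X) = \mathbb{C}^g/L(X)$ carries the standard orientation from the coordinates $(u_1,\dots,u_g)$. The key computation is: for $D = P_1\cdots P_g \in \mathcal{A}$ with $P_k \in \uppi^{-1}([-x_{2k+1},-x_{2k}])$, the differential of $\vec{\varphi}$ sends the tangent vector along the oriented loop at $P_k$ (in the $k$-th factor) to $\sum_{l=1}^g \left(\oint_{A_k}\omega_l - \oint_{A_{k+1}}\omega_l\right)$-worth of displacement — but by the normalization $\oint_{A_k}\omega_l = \delta_{kl}$ from \eqref{prop of omega and q in intro}, this is simply $\vec{e}_k - \vec{e}_{k+1}$ (with $\vec{e}_{g+1} := \vec{0}$). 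More precisely, moving $P_k$ along its oriented real loop by the real parameter, the real tangent vector maps under $d\vec{\varphi}$ to the real vector whose $l$-th component is the integral of $\omega_l$ over that arc; integrating over the whole loop gives $\vec{e}_k - \vec{e}_{k+1}$, and since the loop traverses its parameter monotonically in the chosen orientation, the infinitesimal version has a definite sign.

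Thus the Jacobian matrix of $f$ at $D$, expressed in the oriented real coordinates on $\mathcal{A}$ (one per factor) and the coordinates $u_1,\dots,u_g$ on $\mathbb{R}^g/\mathbb{Z}^g$, is — up to positive diagonal scaling coming from the parametrization speeds — the integer matrix with $k$-th column $\vec{e}_k - \vec{e}_{k+1}$, i.e. the lower-bidiagonal matrix with $1$'s on the diagonal and $-1$'s on the subdiagonal. This matrix has determinant $+1$ (expand along the last row, or note it is unipotent lower-triangular). Hence $f$ is orientation-preserving. I would phrase this carefully: fix $D$ in the interior of $\mathcal{A}$ (all $P_k$ strictly between branch points and on, say, the first sheet), pick the local real coordinate on the $k$-th factor to be the one induced by the chosen orientation of the loop $\uppi^{-1}([-x_{2k+1},-x_{2k}])$, compute $\partial u_l/\partial(\text{$k$-th coordinate}) = \omega_l$ evaluated on that tangent vector, and observe the resulting matrix is $\mathbb{I} - N$ where $N$ is the nilpotent shift, with $\det(\mathbb{I}-N) = 1 > 0$.

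The main obstacle I anticipate is getting the orientation bookkeeping exactly right: one must be scrupulous about (i) the convention relating the homotopy class $A_j - A_{j+1}$ to an actual orientation of the loop $\uppi^{-1}([-x_{2j+1},-x_{2j}])$ in $X$, since this loop lives on both sheets and its projection traverses $[-x_{2j+1},-x_{2j}]$ twice; (ii) making sure the tangent vector one differentiates $\vec{\varphi}$ along is the one pointing in the positively-oriented direction, not its negative, so that no spurious sign of $(-1)^g$ creeps in; and (iii) confirming that the normalization $\oint_{A_k}\omega_l = \delta_{kl}$ is being applied with $A_k$ oriented consistently with the orientation induced on the loop. Once these conventions are pinned down, the determinant computation is immediate, but the conventions are where an error would hide. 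It may be cleanest to reduce to the genus-one building block — check that for a single interval the map from the loop to the relevant circle in $J(X)$ is orientation-preserving by the definition of $\omega$'s normalization — and then note the full Jacobian is (positive scaling)$\times(\mathbb{I}-N)$, so no cancellation across factors can occur.
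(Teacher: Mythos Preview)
Your proposal is correct and follows essentially the same approach as the paper. The paper's proof is more terse: for orientation it simply notes that $\vec{\varphi}$ increases by $\vec{f}_j = \vec{e}_j - \vec{e}_{j+1}$ as $P_j$ traverses its loop once and that the change-of-basis matrix from $\{\vec{e}_j\}$ to $\{\vec{f}_j\}$ has determinant $+1$ (exactly your lower-unipotent computation), and for $\uppi\circ f^{-1} = \vec{b}$ it invokes Lemma~\ref{Dlemma} directly rather than routing through $\tilde{D}$ and $\hat{D}$ as you do, but the substance is the same.
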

\begin{proof}
By Proposition \ref{varphiAprop}, the map $\vec{\varphi}|_{\mathcal{A}}-\mathcal{K} - \frac{\vec{e}_1}{2}$ is a diffeomorphism from $\mathcal{A}$ to $\mathbb{R}^{g}/\mathbb{Z}^{g}$. 
To see that it is orientation-preserving, note that the multi-valued function 
\begin{align*}
\vec{\phi} :\mathcal{A}\to \mathbb{C}^{g}, \qquad D=P_{1} \cdots P_{g} \mapsto \vec{\phi}(D)= \sum_{j=1}^{g}\int_{-x_1}^{P_{j}} \vec{\omega}^{t}
\end{align*}
increases by $\vec{f}_{j} = \vec{e}_{j}-\vec{e}_{j+1}$ as $P_{j}$ goes once around $\uppi^{-1}([-x_{2j+1},-x_{2j}])$. Since the matrix relating the two bases $\{\vec{e}_{j}\}_{j=1}^{g}$ and $\{\vec{f}_{j}\}_{j=1}^{g}$ has determinant $+1$, the map $\vec{\varphi}|_{\mathcal{A}}-\mathcal{K}:\mathcal{A} \to \mathbb{R}^{g}/\mathbb{Z}^{g}$ is orientation-preserving.

Suppose $f(D) = \vec{u}$ where $D \in \mathcal{A}$ and $\vec{u} \in \mathbb{R}^{g}/\mathbb{Z}^{g}$. Then $\vec{u} + \frac{\vec{e}_1}{2} = \vec{\varphi}(D) + \mathcal{K}$ in $J(X)$, so by Lemma \ref{lemma: nonspecial} and Lemma \ref{Dlemma}, $D$ is the divisor of zeros of $\theta(\vec{\varphi}(\cdot) - \vec{u} - \frac{\vec{e}_1}{2})$.
The definition (\ref{bk}) of $b_k$ then implies that $\uppi(D) = \vec{b}(\vec{u})$. This shows that $\uppi(f^{-1}(\vec{u})) = \vec{b}(\vec{u})$ as desired.
\end{proof}

\begin{proposition}\label{B avg q relation}
For all $z\in\mathbb{C}$, we have
\begin{align}
\left\langle \prod_{k=1}^g(z-b_k(\cdot))\right\rangle = \det\left(zI-T\right),
\end{align}
where $b_{1},\ldots,b_{g}$ are defined in \eqref{bk}, and the $g\times g$ matrix $T$ is defined by
\begin{align}\label{T matrix}
T=\left(T_{ij}\right)_{i,j=1}^g, \qquad T_{ij}:=\int_{A_i}z\omega_j = \int_{A_{i}} \frac{dz}{\sqrt{\mathcal{R}(z)}} (z \;\; z^{2} \;\; \cdots \; \; z^{g})(\mathbb{A}^{-1}) \vec{e}_{j}.
\end{align}
\end{proposition}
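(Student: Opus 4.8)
The plan is to transport the space average $\langle\prod_{k=1}^g(z-b_k(\cdot))\rangle$ — which by definition is an integral over $\mathbb{R}^{g}/\mathbb{Z}^{g}$ — to an integral over the submanifold $\mathcal{A}\subset X_{g}$ via the orientation-preserving diffeomorphism $f:\mathcal{A}\to\mathbb{R}^{g}/\mathbb{Z}^{g}$, $f(D)=\vec{\varphi}(D)-\mathcal{K}-\tfrac{\vec{e}_1}{2}$, of Lemma \ref{orientationlemma}, and then to use the product structure $\mathcal{A}\cong\uppi^{-1}([-x_3,-x_2])\times\cdots\times\uppi^{-1}([-x_{2g+1},-x_{2g}])$ to recognize the resulting integral as a determinant. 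Since $f$ differs from the Abel map $\vec{\varphi}|_{\mathcal{A}}$ only by a constant, the pullback $f^{*}(du_1\wedge\cdots\wedge du_g)$ of the standard volume form on $\mathbb{R}^{g}/\mathbb{Z}^{g}$ equals the restriction to $\mathcal{A}$ of $d\varphi_1\wedge\cdots\wedge d\varphi_g$, where $\varphi_j$ is the $j$-th component of $\vec{\varphi}$; and, since $\uppi\circ f^{-1}=\vec{b}$, one has $b_k(f(D))=\uppi(P_k)$ for $D=P_1\cdots P_g\in\mathcal{A}$. The change of variables therefore gives
\begin{align}
\left\langle\prod_{k=1}^g(z-b_k(\cdot))\right\rangle=\int_{\mathcal{A}}\prod_{k=1}^g(z-\uppi(P_k))\,d\varphi_1\wedge\cdots\wedge d\varphi_g. \nonumber
\end{align}

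Next I would expand the top form on the product $\mathcal{A}=\gamma_1\times\cdots\times\gamma_g$, where $\gamma_m:=\uppi^{-1}([-x_{2m+1},-x_{2m}])$ carries the orientation fixed just above Lemma \ref{orientationlemma}. From $\vec{\varphi}(P_1\cdots P_g)=\sum_{m}\int_{-x_1}^{P_m}\vec{\omega}^{t}$ one gets $d\varphi_j=\sum_{m=1}^g\omega_j^{(m)}$ on $\mathcal{A}$, where $\omega_j^{(m)}$ denotes $\omega_j$ pulled back from the $m$-th factor. As each $\gamma_m$ is one-dimensional, $\omega_j^{(m)}\wedge\omega_{j'}^{(m)}=0$, so upon expanding $d\varphi_1\wedge\cdots\wedge d\varphi_g$ only the terms indexed by permutations survive; reordering each such term so that the factor indices increase yields the determinant-type identity
\begin{align}
d\varphi_1\wedge\cdots\wedge d\varphi_g=\sum_{\tau\in S_g}\sgn(\tau)\,\omega_{\tau(1)}^{(1)}\wedge\cdots\wedge\omega_{\tau(g)}^{(g)}. \nonumber
\end{align}
Integrating term by term over the product $\gamma_1\times\cdots\times\gamma_g$ and using Fubini then gives
\begin{align}
\left\langle\prod_{k=1}^g(z-b_k(\cdot))\right\rangle=\sum_{\tau\in S_g}\sgn(\tau)\prod_{m=1}^g\int_{\gamma_m}(z-\uppi(P))\,\omega_{\tau(m)}=\det N,\qquad N_{mj}:=\int_{\gamma_m}(z-\uppi(P))\,\omega_j. \nonumber
\end{align}

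It remains to identify $\det N$ with $\det(zI-T)$. The point is that $\uppi\,\omega_j$ — i.e.\ $z\omega_j$ in the notation of \eqref{T matrix} — is a meromorphic differential on $X$ whose only singularity is a double pole at $\infty$ with vanishing residue, the residue vanishing thanks to the expansion $\omega_j=-2(\mathbb{A}^{-1})_{gj}(1+\bigO(u^2))du$ near $z=\infty$ recorded in the proof of Lemma \ref{pOmegalemma}; hence its periods depend only on homology classes. Since $\gamma_m$ is homologous to $A_m-A_{m+1}$ (with the convention $A_{g+1}=0$) and $\oint_{A_i}\omega_j=\delta_{ij}$ by \eqref{prop of omega and q in intro}, this gives
\begin{align}
N_{mj}=z\oint_{\gamma_m}\omega_j-\oint_{\gamma_m}\uppi\,\omega_j=z(\delta_{mj}-\delta_{m+1,j})-(T_{mj}-T_{m+1,j}), \nonumber
\end{align}
with the convention that the $(g+1)$-th row of $T$ vanishes; equivalently $N=L(zI-T)$ with $L$ the $g\times g$ upper bidiagonal matrix having $1$'s on the diagonal and $-1$'s on the superdiagonal. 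As $\det L=1$, we conclude $\det N=\det(zI-T)$, which is the claim.

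I expect the main difficulty to be bookkeeping rather than conceptual: making all orientation and sign conventions line up in the change of variables, and especially in the reordering of the wedge product so that the permutation sum is a genuine determinant (and not a permanent); and verifying carefully that $\uppi\,\omega_j$ has vanishing residue so that $\oint_{\gamma_m}\uppi\,\omega_j$ may be replaced by $T_{mj}-T_{m+1,j}$. Once Lemma \ref{orientationlemma} (hence also Proposition \ref{varphiAprop} and Lemma \ref{halfperiodlemma}) and the period normalizations are available, the remaining steps are routine.
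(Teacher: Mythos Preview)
Your proposal is correct and follows essentially the same route as the paper's proof: both transport the torus integral to $\mathcal{A}$ via the diffeomorphism of Lemma~\ref{orientationlemma}, expand the pulled-back volume form as a determinant of the holomorphic differentials on the product $\gamma_1\times\cdots\times\gamma_g$, and then reduce the resulting matrix (with rows indexed by $\gamma_m\sim A_m-A_{m+1}$) to $zI-T$. The only cosmetic difference is in this last reduction: the paper performs the successive column additions explicitly, whereas you package the same operations as the factorization $N=L(zI-T)$ with $L$ unit upper bidiagonal---these are the same column operations, and your residue check for $z\omega_j$ at $\infty$ is a welcome justification that the paper leaves implicit (it simply invokes the homotopy $\gamma_m\simeq A_m-A_{m+1}$).
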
 
\begin{proof}
Fix $z_0 \in \mathbb{C}$. Defining the function $F$ by
\begin{align}
F:\uppi(\mathcal{A}) \cong [-x_{3},-x_{2}] \times [-x_{5},-x_{4}] \times \cdots \times [-x_{2g+1},-x_{2g}] \to \mathbb{C}, \qquad F(\vec{\mathrm{b}})=\prod_{k=1}^g(z_0-\mathrm{b}_k),
\end{align} 
and the function $\vec{b}:\mathbb{R}^{g}/\mathbb{Z}^{g} \to \mathbb{R}^g$ by (\ref{vecbdef}),
we can write the space average of $F \circ \vec{b}$ as
\begin{align}\label{spaceaverageFb}
\big\langle F \circ \vec{b} \big\rangle = \int_{[0,1]^{g}} F(\vec{b}(\vec{u}))d\vec{u} 
\end{align}
By Lemma \ref{orientationlemma}, the map $f := \vec{\varphi}|_{\mathcal{A}}-\mathcal{K} - \frac{\vec{e}_1}{2}$ is an orientation-preserving diffeomorphism from $\mathcal{A}$ to $\mathbb{R}^{g}/\mathbb{Z}^{g} \cong [0,1)^{g}$. Hence 
\begin{align}\label{langleFbrangle}
\big\langle F \circ \vec{b} \big\rangle
& = \int_{f(\mathcal{A})} F \circ \vec{b}\, d\vec{u}
= \int_{\mathcal{A}} f^*(F \circ \vec{b}\, d\vec{u}),
\end{align}
where $f^*(F \circ \vec{b}\, d\vec{u})$ denotes the pull-back by $f$ of the $g$-form $F \circ \vec{b}\, d\vec{u}$. (The last step in (\ref{langleFbrangle}) amounts to changing variables from $\vec{u} \in \mathbb{R}^{g}/\mathbb{Z}^{g}$ to $D \in \mathcal{A}$ according to $u_k = \varphi_k(D) -\mathcal{K}_{k} - \frac{(\vec{e}_1)_k}{2}$.)
In the same way that we use the analytic coordinate $z$ to perform calculations on $X$ (viewing $X$ as a two-sheeted cover of the complex $z$-plane), we can use $(z_1, \dots, z_g)$ as coordinates on 
$$\mathcal{A} \cong \uppi^{-1}([-x_{3},-x_{2}]) \times \uppi^{-1}([-x_{5},-x_{4}]) \times \cdots \times \uppi^{-1}([-x_{2g+1},-x_{2g}]) \subset X \times \cdots \times X,$$
where $z_j$ denotes the coordinate on the $j$-th factor of $X$. Evaluating the integral on the right-hand side of (\ref{langleFbrangle}) in these coordinates and using that $f^*(F \circ \vec{b}) = F \circ \vec{b} \circ f = F \circ \uppi$ by Lemma \ref{orientationlemma}, we obtain 
\begin{align*}
\big\langle F \circ \vec{b} \big\rangle & = \prod_{r=1}^g \int_{A_r - A_{r+1}} dz_r  \det(\varphi_i'(z_j))_{i,j=1}^g F(z_1, \dots, z_g)
	\\
& = \prod_{r=1}^g \int_{A_r - A_{r+1}} dz_r  \det \begin{pmatrix}
\varphi_{1}'(z_{1}) & \varphi_{1}'(z_{2}) & \ldots & \varphi_{1}'(z_{g}) \\
\varphi_{2}'(z_{1}) & \varphi_{2}'(z_{2}) & \ldots & \varphi_{2}'(z_{g}) \\
\vdots & \vdots & \ddots & \vdots \\
\varphi_{g}'(z_{1}) & \varphi_{g}'(z_{2}) & \ldots & \varphi_{g}'(z_{g})
\end{pmatrix}  \prod_{k=1}^g (z_0- z_k)
	\\
& = \prod_{r=1}^g \int_{A_r - A_{r+1}} dz_r  \det \begin{pmatrix}
\varphi_1'(z_{1})(z_0-z_{1}) & \varphi_1'(z_{2})(z_0-z_{2}) & \ldots & \varphi_1'(z_{g})(z_0-z_{g}) \\
\varphi_2'(z_{1})(z_0-z_{1}) & \varphi_2'(z_{2})(z_0-z_{2}) & \ldots & \varphi_2'(z_{g})(z_0-z_{g}) \\
\vdots & \vdots & \ddots & \vdots \\
\varphi_g'(z_{1})(z_0-z_{1}) & \varphi_g'(z_{2})(z_0-z_{2}) & \ldots & \varphi_g'(z_{g})(z_0-z_{g})
\end{pmatrix}.
\end{align*}
Using that $\varphi_j'(z) dz = \omega_j$ and that $\oint_{A_k}\omega_j =\delta_{jk}$ for $j,k = 1, \ldots,g$, we conclude that 
\begin{align*}
\langle F \circ \vec{b} \rangle 
= \det
 \begin{pmatrix}
z_0-\int_{A_1 - A_{2}} z\omega_1 & -\int_{A_2 - A_{3}} z\omega_1 & \ldots & -\int_{A_{g-1}-A_{g}} z\omega_1 & -\int_{A_g} z\omega_1 \\
-z_0-\int_{A_1 - A_{2}} z\omega_2 & z_0 -\int_{A_2 - A_{3}} z\omega_2 & \ldots & -\int_{A_{g-1}-A_{g}} z\omega_2 & -\int_{A_g} z\omega_2 \\
-\int_{A_1 - A_{2}} z\omega_3 & -z_0 -\int_{A_2 - A_{3}} z\omega_3 & \ldots & -\int_{A_{g-1}-A_{g}} z\omega_3 & -\int_{A_g} z\omega_3 \\
\vdots & \vdots & \ddots & \vdots & \vdots \\
-\int_{A_1 - A_{2}} z\omega_{g-1} & -\int_{A_2 - A_{3}} z\omega_{g-1} & \ldots & z_0-\int_{A_g-A_{g-1}} z\omega_{g-1} & -\int_{A_g} z\omega_{g-1} \\
-\int_{A_1 - A_{2}} z\omega_{g} & -\int_{A_2 - A_{3}} z\omega_{g} & \ldots & -z_0-\int_{A_g-A_{g-1}} z\omega_{g} & z_0-\int_{A_g} z\omega_{g}
\end{pmatrix}.
\end{align*}
By first adding the $g$-th column to the $(g-1)$-th column, then the $(g-1)$-th column to the $(g-2)$-th column etc., we obtain
\begin{align*}
\langle F \circ \vec{b} \rangle 
& =\det \begin{pmatrix}
z_0-\int_{A_1 - A_{2}} z\omega_1 & -\int_{A_2 - A_{3}} z\omega_1 & \ldots & -\int_{A_{g-1}} z\omega_1 & -\int_{A_g} z\omega_1 \\
-z_0-\int_{A_1 - A_{2}} z\omega_2 & z_0 -\int_{A_2 - A_{3}} z\omega_2 & \ldots & -\int_{A_{g-1}} z\omega_2 & -\int_{A_g} z\omega_2 \\
-\int_{A_1 - A_{2}} z\omega_3 & -z_0 -\int_{A_2 - A_{3}} z\omega_3 & \ldots & -\int_{A_{g-1}} z\omega_3 & -\int_{A_g} z\omega_3 \\
\vdots & \vdots & \ddots & \vdots & \vdots \\
-\int_{A_1 - A_{2}} z\omega_{g-1} & -\int_{A_2 - A_{3}} z \omega_{g-1} & \ldots & z_0-\int_{A_{g-1}} z\omega_{g-1} & -\int_{A_g} z\omega_{g-1} \\
-\int_{A_1 - A_{2}} z\omega_g & -\int_{A_2 - A_{3}} z\omega_g & \ldots & -\int_{A_{g-1}} z\omega_g & z_0-\int_{A_g} z\omega_g
\end{pmatrix}  
	\\
& = \cdots = \det \begin{pmatrix}
z_0-\int_{A_1} z\omega_1 & -\int_{A_2} z\omega_1 & \ldots & -\int_{A_{g-1}} z\omega_1 & -\int_{A_g} z\omega_1 \\
-\int_{A_1} z\omega_2 & z_0 -\int_{A_2} z\omega_2 & \ldots & -\int_{A_{g-1}} z\omega_2 & -\int_{A_g} z\omega_2 \\
-\int_{A_1} z\omega_3 & -\int_{A_2} z\omega_3 & \ldots & -\int_{A_{g-1}} z\omega_3 & -\int_{A_g} z\omega_3 \\
\vdots & \vdots & \ddots & \vdots & \vdots \\
-\int_{A_1} z\omega_{g-1} & -\int_{A_2} z\omega_{g-1} & \ldots & z_0-\int_{A_{g-1}} z\omega_{g-1} & -\int_{A_g} z\omega_{g-1} \\
-\int_{A_1} z\omega_g & -\int_{A_2} z\omega_g & \ldots & -\int_{A_{g-1}} z\omega_g & z_0-\int_{A_g} z\omega_g
\end{pmatrix}   \\
& = \det(z_0I-T).
\end{align*}
Since $z_0 \in \mathbb{C}$ was arbitrary, this finishes the proof.
\end{proof}

\begin{proposition}\label{qT relation}
For all $z \in \mathbb{C}$, we have the identity
\begin{align}
q(z)=\frac{1}{2}\det\left(zI-T\right),
\end{align}
where $q(z)$ is defined in \eqref{intro: q}.
\end{proposition}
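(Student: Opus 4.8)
The plan is to rewrite $T$ in terms of the real period matrix $\mathbb{A}$, which turns $\det(zI-T)$ into the characteristic polynomial of a companion matrix whose coefficients can be read off directly.

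First I would note that $\int_{A_{i}}\frac{z^{k}}{\sqrt{\mathcal{R}(z)}}\,dz=a_{i,k+1}$, so that the definition \eqref{T matrix} of $T$ says exactly $T=\mathbb{A}'\mathbb{A}^{-1}$, where $\mathbb{A}'$ is the $g\times g$ matrix whose $k$-th column is the $(k+1)$-st column of the augmented matrix $(\mathbb{A}\;|\;\vec{a})$; equivalently $\mathbb{A}'=\mathbb{A}N+\vec{a}\,\vec{e}_{g}^{t}$, where $N$ is the nilpotent matrix with $N_{i,k}=\delta_{i,k+1}$ (ones on the first subdiagonal). Since $(zI-T)\mathbb{A}=z\mathbb{A}-\mathbb{A}'=\mathbb{A}\big(zI-\mathbb{A}^{-1}\mathbb{A}'\big)$ and $\mathbb{A}$ is invertible, taking determinants gives
$$\det(zI-T)=\frac{\det(z\mathbb{A}-\mathbb{A}')}{\det\mathbb{A}}=\det\big(zI-\mathbb{A}^{-1}\mathbb{A}'\big)=\det\big(zI-N-\mathbb{A}^{-1}\vec{a}\,\vec{e}_{g}^{t}\big).$$

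Next I would recognize $N+\mathbb{A}^{-1}\vec{a}\,\vec{e}_{g}^{t}$ for what it is: it carries ones on the first subdiagonal and is zero elsewhere except in its last column, which equals $\mathbb{A}^{-1}\vec{a}=:(c_{1},\dots,c_{g})^{t}$. That is, it is the companion matrix with last column $\mathbb{A}^{-1}\vec{a}$, and its characteristic polynomial is $z^{g}-c_{g}z^{g-1}-\cdots-c_{2}z-c_{1}=z^{g}-(1\;\;z\;\;\cdots\;\;z^{g-1})\mathbb{A}^{-1}\vec{a}$. Since $\mathbb{A}^{-1}\vec{a}=-2(q_{0},\dots,q_{g-1})^{t}$ by \eqref{intro: q}, this equals $z^{g}+2\sum_{j=0}^{g-1}q_{j}z^{j}=2q(z)$, i.e. $q(z)=\tfrac12\det(zI-T)$, which is the asserted identity.

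Nothing here is deep; the only point needing a little care is the bookkeeping behind the two matrix identities, namely $T=\mathbb{A}'\mathbb{A}^{-1}$ and the left-factorization $z\mathbb{A}-\mathbb{A}'=\mathbb{A}(zI-\mathbb{A}^{-1}\mathbb{A}')$. As a variant that avoids companion matrices altogether, one may instead verify that $\tfrac{(-1)^{g}}{2\det\mathbb{A}}$ times the $(g+1)\times(g+1)$ determinant with first row $(1\;\;z\;\;\cdots\;\;z^{g})$ and remaining rows $(a_{i,1}\;\;\cdots\;\;a_{i,g+1})$, $i=1,\dots,g$, is a polynomial of degree $g$ with leading coefficient $\tfrac12$ whose $A_{j}$-periods against $ds/\sqrt{\mathcal{R}(s)}$ all vanish (integrating its first row over $A_{j}$ produces a matrix with two equal rows); since $q$ is the unique such polynomial by \eqref{prop of omega and q in intro} and column operations identify that determinant with $(-1)^{g}\det\mathbb{A}\cdot\det(zI-T)$, the claim follows once more.
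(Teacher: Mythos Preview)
Your proof is correct and follows essentially the same route as the paper: the paper defines $\hat{\mathbb{A}}=(a_{i,j+1})_{i,j=1}^{g}$ (your $\mathbb{A}'$), writes $T=\hat{\mathbb{A}}\mathbb{A}^{-1}$, passes to $\det(zI-\mathbb{A}^{-1}\hat{\mathbb{A}})$, observes that $\mathbb{A}^{-1}\hat{\mathbb{A}}$ has subdiagonal ones and last column $\mathbb{A}^{-1}\vec{a}=-2(q_{0},\dots,q_{g-1})^{t}$, and expands along the last column. Your explicit factorization $\mathbb{A}'=\mathbb{A}N+\vec{a}\,\vec{e}_{g}^{t}$ and appeal to the companion-matrix characteristic polynomial make the same computation slightly more transparent, but the argument is the same.
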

\begin{proof}
Let $\hat{\mathbb{A}}$ be the $g \times g$ matrix defined by
\begin{align}\label{hatA}
\hat{\mathbb{A}}:=(a_{i,j+1})_{i,j=1}^g, \qquad  a_{i,j+1}=\int_{A_i}w_{j+1} = \int_{A_i}\frac{z^{j}dz}{\sqrt{R(z)}},
\end{align}
where we recall that $w_{j+1}$ is defined in \eqref{wj differentials}. We can write the definition \eqref{T matrix} of $T_{ij}$ as
\begin{align*}
T_{ij} = \int_{A_i}z\omega_j=\sum_{k=1}^g\hat{\mathbb{A}}_{ik}(\mathbb{A}^{-1})_{kj}, \qquad i,j=1,\ldots,g,
\end{align*}
and hence
\begin{align}
T=\hat{\mathbb{A}}\mathbb{A}^{-1}.
\end{align}
On the other hand, since the $j$-th column of $\hat{\mathbb{A}}$ equals the $(j+1)$-th column of $\mathbb{A}$ for $j=1,\dots,g-1$, the matrix $\mathbb{A}^{-1}\hat{\mathbb{A}}$ has the form
\begin{align}
    \mathbb{A}^{-1}\hat{\mathbb{A}}=\begin{pmatrix} 0 & 0 & \cdots & 0 & * \\ 1 & 0 & \cdots & 0 & * \\ 0 & 1 & \cdots & 0 & * \\ \vdots & \vdots & \ddots & \vdots & \vdots \\ 0 & 0 & \cdots & 1 & * \end{pmatrix},
\end{align}
and by \eqref{intro: q} the $g$-th column of $\mathbb{A}^{-1}\hat{\mathbb{A}}$ is given by
\begin{align*}
\mathbb{A}^{-1}\hat{\mathbb{A}}\vec{e}_g&=\mathbb{A}^{-1} \vec{a}=-2(q_{0},\ldots,q_{g-1})^{t}.
\end{align*}
Hence, we find
\begin{align*}
\frac{1}{2}\det\left(zI-T\right)=\frac{1}{2}\det\left(zI-\hat{\mathbb{A}}\mathbb{A}^{-1}\right)=\frac{1}{2}\det\left(zI-\mathbb{A}^{-1}\hat{\mathbb{A}}\right)=\frac{z^{g}}{2} - \frac{1}{2}\sum_{j=1}^{g}(\mathbb{A}^{-1}\hat{\mathbb{A}})_{jg}z^{j-1}=q(z),
\end{align*}
where we have expanded the determinant along the last column to obtain the third equality.
\end{proof}

We conclude from (\ref{Birkhoff modified with 1/r}), (\ref{timeequalsspaceaverage}), and the definition (\ref{calBdef}) of $\mathcal{B}$ that
\begin{align*}
\int_M^r\frac{\mathcal{B}(-x_j,\vec{\nu}(t))}{t}dt
& = \langle \mathcal{B}(-x_j, \cdot)\rangle \log r +o(\log r)
	\\
& = \bigg\langle \frac{\prod_{k=1}^g(-x_j-b_k(\cdot))}{q(-x_j)} \bigg\rangle \log r +o(\log r), \qquad r \to + \infty, ~ j=1,\ldots,2g+1.
\end{align*}
Propositions \ref{B avg q relation} and \ref{qT relation} imply that 
$$\bigg\langle \prod_{k=1}^g(-x_j-b_k(\cdot)) \bigg\rangle
= 2q(-x_j), \qquad j=1,\ldots,2g+1.$$
Consequently, as $r \to + \infty$,
$$\int_M^r\frac{\mathcal{B}(-x_j,\vec{\nu}(t))}{t}dt
= 2\log r +o(\log r), \qquad  j=1,\ldots,2g+1,$$
which proves (\ref{explicit leading term}) and completes the proof of Theorem \ref{thm: main result - ergodic}.

\appendix
\section{On the mapping $(x_{1},x_{2},\ldots,x_{2g+1}) \to (\Omega_{1},\ldots,\Omega_{g})$}\label{appendix: mapping open ball}
Since $q$ has exactly one zero in each of the intervals $(-x_{2g+1},-x_{2g}), \dots, (-x_{5},-x_{4})$, $(-x_{3},-x_{2})$, we have
\begin{align*}
q(z) \to \frac{1}{2}\prod_{j=1}^{g}(z+x_{2j}) \qquad \mbox{as } (x_{2},x_{4},\ldots,x_{2g}) \to (x_{3},x_{5},\ldots,x_{2g+1})
\end{align*}
uniformly for $z$ in compact subsets of $\mathbb{C}$. Hence, by \eqref{intro: Omegaj}, for each $j \in \{1,2,\ldots,g\}$,
\begin{align*}
\Omega_{j} \to 2 \big( \sqrt{x_{2j+1}-x_{1}} - \sqrt{x_{2j-1}-x_{1}} \big) \qquad \mbox{as } (x_{2},x_{4},\ldots,x_{2g}) \to (x_{3},x_{5},\ldots,x_{2g+1}),
\end{align*}
and thus for each $\vec{x}_{*}:=(x_{1},x_{3},x_{3},x_{5},x_{5},\cdots,x_{2g+1},x_{2g+1})$ satisfying $0<x_{1}<x_{3}<x_{5}<\ldots<x_{2g+1}$, $\Omega(\vec{x}_{*})$ is analytic in each of the variables $x_{1},x_{3},x_{5},\ldots,x_{2g+1}$ and we have
\begin{align*}
\det \Big( \frac{d (\Omega_{j}(\vec{x}_{*}))}{d x_{2k+1}} \Big)_{j,k=1}^{g} = \det \Big( (\boldsymbol{\partial}_{2k}\Omega_{j})(\vec{x}_{*})) + (\boldsymbol{\partial}_{2k+1}\Omega_{j})(\vec{x}_{*})) \Big)_{j,k=1}^{g} = \prod_{j=1}^{g} \frac{1}{\sqrt{x_{2j+1}-x_{1}}} \neq 0.
\end{align*}
This shows that the image of the mapping 
\begin{align*}
\{(x_{1},x_{2},\ldots,x_{2g+1}): x_{1}<x_{2}<\cdots<x_{2g+1}\} \to (\Omega_{1},\ldots,\Omega_{g})
\end{align*}
contains an open ball in $(0,+\infty)^{g}$, so that for $g \geq 2$ each of the four cases in \eqref{the four cases} does happen for certain choices of $(x_{1},x_{2},\ldots,x_{2g+1})$.

\section{Bessel model RH problem}\label{Section:Appendix}
\begin{itemize}
\item[(a)] $\Phi_{\mathrm{Be}} : \mathbb{C} \setminus \Sigma_{\mathrm{Be}} \to \mathbb{C}^{2\times 2}$ is analytic, where
$\Sigma_{\mathrm{Be}}=(-\infty,0) \cup e^{ \frac{2\pi i}{3} }  (0,+\infty) \cup e^{ -\frac{2\pi i}{3} }  (0,+\infty)$ is oriented as shown in Figure \ref{figBessel}.
\item[(b)] $\Phi_{\mathrm{Be}}$ satisfies the jump conditions
\begin{equation}\label{Jump for P_Be}
\begin{array}{l l} 
\Phi_{\mathrm{Be},+}(z) = \Phi_{\mathrm{Be},-}(z) \begin{pmatrix}
0 & 1 \\ -1 & 0
\end{pmatrix}, & z \in (-\infty,0), \\

\Phi_{\mathrm{Be},+}(z) = \Phi_{\mathrm{Be},-}(z) \begin{pmatrix}
1 & 0 \\ 1 & 1
\end{pmatrix}, & z \in e^{ \frac{2\pi i}{3} }  (0,+\infty), \\

\Phi_{\mathrm{Be},+}(z) = \Phi_{\mathrm{Be},-}(z) \begin{pmatrix}
1 & 0 \\ 1 & 1
\end{pmatrix}, & z \in e^{ -\frac{2\pi i}{3} }  (0,+\infty). \\
\end{array}
\end{equation}
\item[(c)] As $z \to \infty$, $z \notin \Sigma_{\mathrm{Be}}$, we have
\begin{equation}\label{large z asymptotics Bessel}
\Phi_{\mathrm{Be}}(z) = ( 2\pi z^{\frac{1}{2}} )^{-\frac{\sigma_{3}}{2}}M
\left(I+\frac{ \Phi_{\mathrm{Be},1}}{z^{\frac{1}{2}}} + \bigO(z^{-1})\right) e^{2 z^{\frac{1}{2}}\sigma_{3}},
\end{equation}
where $\ds \Phi_{\mathrm{Be},1} = \frac{1}{16}\begin{pmatrix}
-1 & -2i \\ -2i & 1
\end{pmatrix}$ and $M = \frac{1}{\sqrt{2}}\begin{pmatrix}
1 & i \\ i & 1
\end{pmatrix}$.
\item[(d)] As $z$ tends to 0, the behavior of $\Phi_{\mathrm{Be}}(z)$ is
\begin{equation}\label{local behavior near 0 of P_Be}
\begin{array}{l l}
\displaystyle \Phi_{\mathrm{Be}}(z) = \left\{ \begin{array}{l l}
\begin{pmatrix}
\bigO(1) & \bigO(\log z) \\
\bigO(1) & \bigO(\log z) 
\end{pmatrix}, & |\arg z| < \frac{2\pi}{3}, \\
\begin{pmatrix}
\bigO(\log z) & \bigO(\log z) \\
\bigO(\log z) & \bigO(\log z) 
\end{pmatrix}, & \frac{2\pi}{3}< |\arg z| < \pi.
\end{array}  \right.
\end{array}
\end{equation}
\end{itemize}
\begin{figure}
\centering
\begin{tikzpicture}
\draw (-3,0)--(0,0);
\draw (0,0)--(120:2);
\draw (0,0)--(-120:2);
\node at (0.1,-0.2) {$0$};
\draw[fill] (0,0) circle (0.05);
\draw[black,arrows={-Triangle[length=0.18cm,width=0.12cm]}]
(180:1.4) --  ++(0:0.001);
\draw[black,arrows={-Triangle[length=0.18cm,width=0.12cm]}]
(120:0.85) --  ++(-60:0.001);
\draw[black,arrows={-Triangle[length=0.18cm,width=0.12cm]}]
(-120:0.85) --  ++(60:0.001);
\end{tikzpicture}
\caption{\label{figBessel}The jump contour $\Sigma_{\mathrm{Be}}$ for $\Phi_{\mathrm{Be}}$.}
\end{figure}
The unique solution of this RH problem exists and can be explicitly constructed in terms of Bessel functions \cite{DIZ,KMcLVAV}.

\end{document}